\newtheoremstyle{myremark} 
    {7pt}                    
    {7pt}                    
    {}  	                 
    {}                           
    {\bf}       	         
    {.}                          
    {.5em}                       
    {}  
\theoremstyle{plain}
\newtheorem{lemma}{Lemma}[section]
\newtheorem{theorem}[lemma]{Theorem}
\newtheorem{definition}[lemma]{Definition}
\newtheorem{corollary}[lemma]{Corollary}
\newtheorem{proposition}[lemma]{Proposition}
\newtheorem{notation}[lemma]{Notation}
\newtheorem*{theorem*}{Main result}
\theoremstyle{myremark}
\newtheorem{remark}[lemma]{Remark}
\newtheorem{example}[lemma]{Example}
\newcommand{\RR}{\mathbb{R}}
\newcommand{\ZZ}{\mathbb{Z}}
\newcommand{\TT}{\mathbb{T}}
\newcommand{\niceb}{\mathcal{B}}
\newcommand{\nicex}{\mathcal{X}}
\newcommand{\nicep}{\mathcal{P}}
\newcommand{\htpyequiv}{\simeq}
\newcommand{\conn}{\mathrm{conn}}
\newcommand{\redhom}{\widetilde{H}}
\newcommand{\incl}{\hookrightarrow}
\renewcommand{\subset}{\subseteq}
\newcommand{\cl}{\mathrm{Cl}}
\newcommand{\lk}{\mathrm{lk}}
\newcommand{\id}{\mathrm{id}}
\newcommand{\colim}{\mathrm{colim}}
\newcommand{\hocolim}{\mathrm{hocolim}}
\newcommand{\dirg}[1]{\overrightarrow{#1}}
\newcommand{\diredge}{\rightarrow}
\newcommand{\vrleq}[2]{\mathrm{VR}_\leq(#1;#2)}
\newcommand{\vr}[2]{\mathrm{VR}(#1;#2)}
\newcommand{\vrlessdir}[2]{\overrightarrow{\mathrm{VR}}_<(#1;#2)}
\newcommand{\vrleqdir}[2]{\overrightarrow{\mathrm{VR}}_\leq(#1;#2)}
\newcommand{\vrdir}[2]{\overrightarrow{\mathrm{VR}}(#1;#2)}
\newcommand{\vrcless}[2]{\mathbf{VR}_<(#1;#2)}
\newcommand{\vrcleq}[2]{\mathbf{VR}_\leq(#1;#2)}
\newcommand{\vrc}[2]{\mathbf{VR}(#1;#2)}
\newcommand{\wf}{\mathrm{wf}}
\newcommand{\wfc}[2]{\mathrm{wf}(#1;#2)}
\newcommand{\wfcless}[2]{\mathrm{wf}_<(#1;#2)}
\newcommand{\wfcleq}[2]{\mathrm{wf}_\leq(#1;#2)}
\newcommand{\cnk}[2]{C_{#1}^{#2}}
\newcommand{\cnkdir}[2]{\overrightarrow{C_{#1}^{#2}}}
\newcommand{\cech}{\mathbf{\check{C}}}
\newcommand{\cechless}{\mathbf{\check{C}}_{<}}
\newcommand{\cechleq}{\mathbf{\check{C}}_{\leq}}
\newcommand{\ballless}{B_{<}}
\newcommand{\ballleq}{B_{\leq}}
\newcommand{\ddist}{\vec{d}}
\newcommand{\md}[1]{\ \mathrm{mod}\ #1}
\newcommand{\expect}{\mathbf{E}}
\begin{document}
\title{The Vietoris--Rips complexes of a circle}
\subjclass[2010]{05E45, 55U10, 68R05}
\keywords{Vietoris--Rips complex, \v{C}ech complex, Homotopy, Clique complex, Circular arc, Persistent homology}
\author{Micha{\l} Adamaszek}
\address{Department of Mathematical Sciences, University of Copenhagen, Universitetsparken 5, 2100 Copenhagen, Denmark}
\email{aszek@mimuw.edu.pl}
\author{Henry Adams}
\address{Department of Mathematics, Colorado State University, Fort Collins, CO 80523, United States}
\email{adams@math.colostate.edu}
\thanks{Research of HA was supported by the
Institute for Mathematics and its Applications with funds provided by the National Science Foundation. MA is supported by VILLUM FONDEN through the network for Experimental Mathematics in Number Theory, Operator Algebras, and Topology. Part of this research was carried out when MA was at the Max Planck Institute for Informatics in Saarbr\"ucken.}

\begin{abstract}
Given a metric space $X$ and a distance threshold $r>0$, the Vietoris--Rips simplicial complex has as its simplices the finite subsets of $X$ of diameter less than $r$. A theorem of Jean-Claude Hausmann states that if $X$ is a Riemannian manifold and $r$ is sufficiently small, then the Vietoris--Rips complex is homotopy equivalent to the original manifold. Little is known about the behavior of Vietoris--Rips complexes for larger values of $r$, even though these complexes arise naturally in applications using persistent homology. We show that as $r$ increases, the Vietoris--Rips complex of the circle obtains the homotopy types of the circle, the 3-sphere, the 5-sphere, the 7-sphere, \ldots, until finally it is contractible. As our main tool we introduce a directed graph invariant, the \emph{winding fraction}, which in some sense is dual to the circular chromatic number. Using the winding fraction we classify the homotopy types of the Vietoris--Rips complex of an arbitrary (possibly infinite) subset of the circle, and we study the expected homotopy type of the Vietoris--Rips complex of a uniformly random sample from the circle. Moreover, we show that as the distance parameter increases, the ambient \v{C}ech complex of the circle (i.e.\ the nerve complex of the covering of a circle by all arcs of a fixed length) also obtains the homotopy types of the circle, the 3-sphere, the 5-sphere, the 7-sphere, \ldots, until finally it is contractible.
\end{abstract}
\maketitle

\section{Introduction}
\label{sect:intro}

Given a metric space $X$ and a distance threshold $r>0$, the \emph{Vietoris--Rips simplicial complex} $\vrcless{X}{r}$ has as its simplices the finite subsets of $X$ of diameter less than $r$. As the maximal simplicial complex determined by its 1-skeleton, it is an example of a clique (or flag) complex. Vietoris--Rips complexes were used by Vietoris to define a (co)homology theory for metric spaces \cite{Vietoris27}, and by Rips to study hyperbolic groups \cite{Gromov1987}.

More recently, Vietoris--Rips complexes are used in computational algebraic topology and in topological data analysis. In this context the metric space $X$ is often a finite sample from some unknown subset $M\subset \RR^d$, and one would like to use $X$ to recover topological features of $M$. The idea behind \emph{topological persistence} is to reconstruct $\vrcless{X}{r}$ as the distance threshold $r$ varies from small to large, to disregard short-lived topological features as the result of sampling noise, and to trust topological features which persist as being representative of the shape of $M$. For example, with \emph{persistent homology} one attempts to reconstruct the homology groups of $M$ from the homology of $\vrcless{X}{r}$ as $r$ varies \cite{EdelsbrunnerHarer, Carlsson2009, CarlssonIshkhanovDeSilvaZomorodian2008}.

Part of the motivation for using Vietoris--Rips complexes in applied contexts comes from the work of Hausmann and Latschev. Hausmann proves that if $M$ is a closed Riemannian manifold and if $r$ is sufficiently small compared to the injectivity radius of $M$, then $\vrcless{M}{r}$ is homotopy equivalent to $M$ \cite{Hausmann1995}. Latschev furthermore shows that if $X$ is Gromov--Hausdorff close to $M$ (for example a sufficiently dense finite sample) and $r$ is sufficiently small, then $\vrcless{X}{r}$ recovers the homotopy type of $M$ \cite{Latschev2001}. As the main idea of persistence is to allow $r$ to vary, we would like to understand what happens when $r$ is beyond the ``sufficiently small'' range.

As the main result of this paper, we show that as the distance threshold increases, the Vietoris--Rips complex $\vrcless{S^1}{r}$ of the circle obtains the homotopy types of the odd-dimensional spheres $S^1$, $S^3$, $S^5$, $S^7$, \ldots, until finally it is contractible. To our knowledge, this is the first computation for a non-contractible connected manifold $M$ of the homotopy types of $\vrcless{M}{r}$ for arbitrary $r$ (and also a first computation of the persistent homology of $\vrcless{M}{r}$). Our main result confirms, for the case $M=S^1$, a conjecture of Hausmann that for $M$ a compact Riemannian manifold, the connectivity of $\vrcless{M}{r}$ is a non-decreasing function of the distance threshold $r$ \cite[(3.12)]{Hausmann1995}.

As our main tools we introduce \emph{cyclic graphs}, a combinatorial abstraction of Vietoris--Rips graphs for subsets of the circle, and their invariant called the \emph{winding fraction}. In a sense which we make precise, the winding fraction is a directed dual of the \emph{circular chromatic number} of a graph \cite[Chapter 6]{HellNesetril2004}. In \cite{AAFPP-J} we proved that for $X\subset S^1$ finite, $\vrcless{X}{r}$ is homotopy equivalent to either an odd-dimensional sphere or a wedge sum of spheres of the same even dimension; the theory of winding fractions provides us with quantitative control over which homotopy type occurs, and also over the behavior of induced maps between complexes. As applications we classify the homotopy types of $\vrcless{X}{r}$ for arbitrary (possibly infinite) subsets $X\subset S^1$, and we analyze the evolution of the homotopy type of $\vrcless{X}{r}$ when $X\subset S^1$ is chosen uniformly at random.

\v{C}ech complexes are a second geometric construction producing a simplicial complex from a metric space. The \emph{\v{C}ech complex} $\cechless(S^1;r)$ is defined as the nerve of the collection of all open arcs of length $2r$ in the circle of circumference $1$. For $r$ small the Nerve Theorem \cite[Corollary~4G.3]{Hatcher} implies that $\cechless(S^1;r)$ is homotopy equivalent to $S^1$. Just as we study $\vrcless{S^1}{r}$ in the regime where $r$ is too large for Hausmann's result to apply, we also study $\cechless(S^1;r)$ in the regime where $r$ is too large for the Nerve Theorem to apply. We show that as $r$ increases, the ambient \v{C}ech complex $\cechless(S^1;r)$ also obtains the homotopy types of $S^1$, $S^3$, $S^5$, $S^7$, \ldots, until finally it is contractible. 

All of this has analogues for the complexes $\vrcleq{S^1}{r}$ and $\cechleq(S^1;r)$, defined by sets of diameter at most $r$, respectively closed arcs of length $2r$. We can summarize the main results as follows.

\begin{theorem*}[Theorems~\ref{thm:vrlessfull}, \ref{thm:vrcleqfull}, \ref{thm:cechleqfull}, \ref{thm:cechlessfull}]
Let $0<r<\frac12$. There are homotopy equivalences
\begin{align*}
\vrcless{S^1}{r}\htpyequiv S^{2l+1} \quad & \mathrm{if}\ \tfrac{l}{2l+1}<r\leq\tfrac{l+1}{2l+3}, \\
\cechless(S^1;r)\htpyequiv S^{2l+1} \quad & \mathrm{if}\ \tfrac{l}{2(l+1)}<r\leq\tfrac{l+1}{2(l+2)},
\end{align*}
\begin{align*}
\vrcleq{S^1}{r}\htpyequiv &\begin{cases}
S^{2l+1} & \mathrm{if}\ \frac{l}{2l+1}<r<\frac{l+1}{2l+3}, \\
\bigvee^{\mathfrak{c}} S^{2l} & \mathrm{if}\ r=\frac{l}{2l+1},
\end{cases} \\ 
\cechleq(S^1;r)\htpyequiv &\begin{cases}
S^{2l+1} & \mathrm{if}\ \frac{l}{2(l+1)}<r<\frac{l+1}{2(l+2)},\\
\bigvee^{\mathfrak{c}} S^{2l} & \mathrm{if}\ r=\frac{l}{2(l+1)},
\end{cases}
\end{align*}
where $l=0,1,\ldots$.
\end{theorem*}

\subsection*{Contents of the paper}
In Section~\ref{sect:preliminaries} we introduce preliminary concepts and notation, including Vietoris--Rips complexes. We introduce cyclic graphs and develop their invariant called the winding fraction in Section~\ref{sect:cyclic}. In Section~\ref{sect:winding} we show how this invariant affects the homotopy type of the clique complex of a cyclic graph. In Section~\ref{sect:density} we show that the homotopy type of the Vietoris--Rips complex stabilizes for sufficiently dense samples of $S^1$. We apply the winding fraction to study the evolution of Vietoris--Rips complexes for random subsets of $S^1$ in Section~\ref{sect:evolution}. The main result appears in Sections~\ref{sect:s1} and \ref{sect:singular}, where we show how to compute the homotopy types of Vietoris--Rips complexes of arbitrary (possibly infinite) subsets of $S^1$; in particular we describe $\vrcless{S^1}{r}$. In Section~\ref{sect:cech} we transfer these results to the \v{C}ech complexes of the circle. The appendices contain proofs of auxiliary results in linear algebra and probability.

\section{Preliminaries}
\label{sect:preliminaries}

We refer the reader to Hatcher \cite{Hatcher} and Kozlov \cite{Kozlov} for basic concepts in topology and combinatorial topology.

\subsection*{Simplicial complexes}
For $K$ a simplicial complex, let $V(K)$ be its vertex set. The link of a vertex $v\in V(K)$ is $\lk_K(v) = \{\sigma \in K~|~v\notin \sigma\mbox{ and }\sigma \cup \{v\} \in K\}$. We will identify an abstract simplicial complex with its geometric realization.

\begin{definition}
\label{def:cliquecomplex}
For an undirected graph $G$ the \emph{clique complex} $\cl(G)$ is the simplicial complex with vertex set $V(G)$ and with faces determined by all cliques (complete subgraphs) of $G$.
\end{definition}

\subsection*{Vietoris--Rips complexes} The Vietoris--Rips complex is used to capture a notion of proximity in a metric space. 
\begin{definition}
\label{def:vrcomplexes}
Suppose $X$ is a metric space and $r> 0$ is a real number. The \emph{Vietoris--Rips} complex $\vrcleq{X}{r}$ \emph{(}resp.\ $\vrcless{X}{r}$\emph{)} is the simplicial complex with vertex set $X$, where a finite subset $\sigma\subseteq X$ is a face if and only if the diameter of $\sigma$ is at most $r$ (resp.\ less than $r$).
\end{definition}

Every Vietoris--Rips complex is the clique complex of its $1$-skeleton. We will write $\vrc{X}{r}$, omitting the subscripts $<$ and $\leq$, in statements which remain true whenever either inequality is applied consequently throughout. 

\subsection*{Conventions regarding the circle}
We give the circle $S^1$ the arc-length metric scaled so that the circumference of $S^1$ is $1$. For $x,y\in S^1$ we denote by $[x,y]_{S^1}$ the closed clockwise arc from $x$ to $y$ and by $\ddist(x,y)$ its length --- the clockwise distance from $x$ to $y$. For a fixed choice of $0\in S^1$ each point $x\in S^1$ can be identified with the real number $\ddist(0,x)\in[0,1)$, and this will be our coordinate system on $S^1$. For any two numbers $x,y\in\RR$ we define $[x,y]_{S^1}=[x\md{1},y\md{1}]_{S^1}$ and $\ddist(x,y)=\ddist(x\md{1},y\md{1})$. Open and half-open arcs are defined similarly. If $x_1,x_2,\ldots,x_s\in S^1$ then we will write
$$x_1\prec x_2\prec\cdots\prec x_s$$
if $x_1,\ldots,x_s$ appear on $S^1$ in this clockwise order, or equivalently if they are pairwise distinct and $\sum_{i=1}^s\ddist(x_i,x_{i+1})=1$, where $x_{s+1}=x_1$. 

We define $\ddist_n(i,j)=n\cdot\ddist(\frac{i}{n},\frac{j}{n})$ to be the ``forward'' distance from $i$ to $j$ in $\ZZ/n$.

\subsection*{Directed graphs}
Throughout this work a \emph{directed graph} is a pair $\dirg{G}=(V,E)$ with $V$ the set of vertices and $E\subseteq V\times V$ the set of directed edges, subject to the conditions $(v,v)\not\in E$ (no loops) and $(v,w)\in E\implies (w,v)\not\in E$ (no edges oriented in both directions). The edge $(v,w)$ will also be denoted by $v\diredge w$. For a vertex $v\in V(\dirg{G})$ we define the out- and in-neighborhoods
$$N^+(\dirg{G},v)=\{w~:~v\diredge w\}, \quad N^-(\dirg{G},v)=\{w~:~w\diredge v\},$$
as well as their closed versions
$$N^+[\dirg{G},v]=N^+(\dirg{G},v)\cup\{v\}, \quad N^-[\dirg{G},v]=N^-(\dirg{G},v)\cup\{v\}.$$
A \emph{directed cycle} of length $s$ in $\dirg{G}$ is a sequence of vertices $v_1,\ldots,v_s$ such that there is an edge $v_i\to v_{i+1}$ for all $i=1,\ldots,s$, where $v_{s+1}=v_1$. 

For a directed graph $\dirg{G}$ we will denote by $G$ the \emph{underlying undirected graph} obtained by forgetting the orientations. If $G$ is an undirected graph we write $v\sim w$ when $v$ and $w$ are adjacent and we define
$$N(G,v)=\{w~:~w\sim v\},\quad N[G,v]=N(G,v)\cup\{v\}.$$

For $v \in V$ let $\dirg{G}\setminus v$ be the directed graph obtained by removing vertex $v$ and all of its incident edges, and for $e \in E$ let $\dirg{G}\setminus e$ be obtained by removing edge $e$. The undirected versions $G\setminus v$ and $G\setminus e$ are defined similarly.

All graphs considered in this paper are finite.

\section{Cyclic graphs, winding fractions, and dismantling}
\label{sect:cyclic}
In this section we develop the combinatorial theory of cyclic graphs, dismantling, and winding fractions.

We are going to work with the notion of a cyclic order. While there exist definitions of a cyclic order based on the abstract ternary relation of betweenness \cite{huntington1916set}, the following definition will be sufficient for our purpose. A \emph{cyclic order} on a finite set $S$ of cardinality $n$ is a bijection $h:S\to\{0,\ldots,n-1\}$. Denoting $x_i=h^{-1}(i)$ we write this simply as
$$x_0\prec x_1\prec\cdots\prec x_{n-1}.$$
If $n\geq 3$ this gives rise to a betweenness relation: we write $x_i\prec x_j\prec x_k$ if $i<j<k$ or $k<i<j$ or $j<k<i$. If $S'\subset S$ then any cyclic order on $S$ restricts in an obvious way to a cyclic order on $S'$.

A subinterval in such a cyclic ordering of $S$ is either (1) the empty set, (2) a set of the form $\{x_i,\ldots,x_j\}$ for $0\leq i\leq j\leq n-1$, or (3) a set of the form $\{x_j,\ldots,x_{n-1},x_0,\ldots,x_i\}$ for $0\leq i<j\leq n-1$. In particular $S$ itself is also a subinterval.

A function $f:S\to T$ between cyclic orders is \emph{cyclic monotone} if (1) for every $t\in T$ the set $f^{-1}(t)$ is a subinterval of $S$, and (2) $f(s)\prec f(s')\prec f(s'')$ in $T$ implies $s\prec s'\prec s''$ in $S$ for any $s,s',s''\in S$. 

One easily shows that if $f$ is cyclic monotone, if $s\prec s'\prec s''$, and if $f(s),f(s'),f(s'')$ are pairwise distinct, then $f(s)\prec f(s')\prec f(s'')$. Moreover, if $f:S\to T$ is cyclic monotone then the preimage of any subinterval of $T$ is a subinterval of $S$.


We will concentrate on the following class of directed graphs.

\begin{definition}
\label{def:cyclicgraph}
A directed graph $\dirg{G}$ is called \emph{cyclic} if its vertices can be arranged in a cyclic order $v_0\prec v_1\prec\cdots\prec v_{n-1}$ subject to the following condition: if there is a directed edge $v_i\diredge v_j$, then either $j=(i+1)\md{n}$ or there are directed edges $$v_i\diredge v_{(j-1)\md{n}} \quad\mathrm{and}\quad v_{(i+1)\md{n}}\diredge v_{j}.$$
\end{definition}
In the future all arithmetic operations on the vertex indices are understood to be reduced modulo $n$; for instance we will write simply $v_{i+k}$ for $v_{(i+k)\md{n}}$.

Two examples of cyclic graphs are shown in Fig.~\ref{fig:firstexamples}. Cyclic graphs are a special case of directed graphs with a \emph{round enumeration}; the latter are defined by the above definition when edges with double (opposite) orientations are allowed. For a comprehensive survey of related graph classes see \cite{LinSchwarcfiter2009}, especially Theorem 10.


We begin with some basic properties of cyclic graphs.
\begin{lemma}
\label{lem:cycliceasy}
Suppose $\dirg{G}$ is a cyclic graph with $n$ vertices in cyclic order $v_0\prec \cdots\prec v_{n-1}$. Then:
\begin{itemize}
\item[(a)] For every $i=0,\ldots,n-1$ there exist $s,s'\geq 0$ such that $$N^+[\dirg{G},v_i]=\{v_i,v_{i+1},\ldots,v_{i+s}\},\quad N^-[\dirg{G},v_i]=\{v_{i-s'},\ldots,v_{i-1},v_i\}.$$
\item[(b)] For every $i=0,\ldots,n-1$ we have inclusions
$$N^+(\dirg{G},v_i)\subseteq N^+[\dirg{G},v_{i+1}],\quad N^-(\dirg{G},v_{i+1})\subseteq N^-[\dirg{G},v_i].$$
\item[(c)] Every induced subgraph of $\dirg{G}$ is a cyclic graph.
\item[(d)] If $\dirg{G}$ contains a directed cycle then $v_i\to v_{i+1}$ for all $i=0,\ldots,n-1$.
\end{itemize}
\end{lemma}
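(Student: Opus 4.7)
Proof proposal. The four parts are interrelated, with (a) being the workhorse that is essentially a direct corollary of the defining cyclic condition, (b) reading off immediately from that same condition, (c) transferring (a) from $\dirg{G}$ to induced subgraphs via the restriction of the cyclic order, and (d) being the most substantive --- this is where I expect the main difficulty to lie. Throughout I will treat $N^+$ and $N^-$ as symmetric, proving one and remarking that the other follows by reversing all edges.

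For (a), the plan is to show that for fixed $i$ the set $\{j : v_i\to v_j\}$ is a clockwise-contiguous arc starting right after $v_i$. Suppose $v_i\to v_j$ with $j\neq i+1$. The cyclic condition directly gives $v_i\to v_{j-1}$, and iterating this implication (which only fails once the index equals $i+1$) propagates to $v_i\to v_{i+1},v_{i+2},\ldots,v_j$. Hence if $s$ is the largest $t\geq 0$ with $v_i\to v_{i+t}$ (setting $s=0$ if $v_i$ has no out-neighbors), then $N^+[\dirg{G},v_i]=\{v_i,v_{i+1},\ldots,v_{i+s}\}$; the claim for $N^-$ is dual, using the other half of the cyclic condition. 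For (b) I simply unpack the definition: if $v_i\to v_j$ and $j=i+1$ then $v_j=v_{i+1}\in N^+[\dirg{G},v_{i+1}]$; otherwise the cyclic condition hands over $v_{i+1}\to v_j$, i.e.\ $v_j\in N^+(\dirg{G},v_{i+1})$. The inclusion $N^-(\dirg{G},v_{i+1})\subseteq N^-[\dirg{G},v_i]$ is symmetric.

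For (c), let $S\subseteq V(\dirg{G})$ carry the induced cyclic order $u_0\prec\cdots\prec u_{m-1}$, and write $\dirg{H}$ for the induced subgraph. The key observation is that $N^+(\dirg{H},u_i)=N^+(\dirg{G},u_i)\cap S$ is the intersection of a subinterval of $V$'s cyclic order (by (a)) with the subset $S$, hence a subinterval of $S$'s cyclic order. Moreover, if this intersection is non-empty then its first element in $S$'s order must be $u_{i+1}$, because $u_{i+1}$ is by construction the first element of $S$ clockwise from $u_i$, and any element of $S$ lying in the arc $N^+(\dirg{G},u_i)$ is clockwise from $u_i$. Thus $N^+(\dirg{H},u_i)=\{u_{i+1},\ldots,u_{i+t}\}$ for some $t$; together with the analogous statement for in-neighborhoods one verifies the defining cyclic condition for $\dirg{H}$ in a line.

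The main obstacle is (d). The plan is to first prove, by induction on the cyclic edge-length $t=\ddist_n(i,j)$, the following \emph{arc-filling lemma}: whenever $v_i\to v_j$, every length-one clockwise edge $v_{i+\ell}\to v_{i+\ell+1}$ with $0\leq \ell\leq t-1$ is also present in $\dirg{G}$. The base $t=1$ is trivial, and for the step the cyclic condition supplies two shorter edges $v_i\to v_{j-1}$ and $v_{i+1}\to v_j$ whose arcs cover every length-one edge in the arc from $v_i$ to $v_j$. Given the arc-filling lemma, take any directed cycle $w_1\to\cdots\to w_k\to w_1$. Each edge $w_p\to w_{p+1}$ produces a contiguous string of length-one edges spanning its arc, and concatenating these strings yields a closed walk along length-one edges whose total clockwise displacement equals the total length of the cycle, which is a positive multiple of $n$. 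Such a walk must traverse every length-one edge $v_i\to v_{i+1}$ at least once, completing the proof. The subtle point I want to get right is the modular arithmetic in the induction of the arc-filling lemma, since the definition treats the cyclic exception $j=i+1\pmod n$ specially; once that is cleanly handled the rest falls out.
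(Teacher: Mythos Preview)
Your proof is correct and follows essentially the same route as the paper, which dispatches (a)--(c) in one line each as direct consequences of the definition and, for (d), iteratively replaces any edge $v_i\to v_j$ with $j\neq i+1$ by the path $v_i\to v_{i+1}\to v_j$ until only length-one edges remain. Your arc-filling lemma and the paper's edge-subdivision procedure are two phrasings of the same induction; your version simply supplies more of the verification that the paper leaves implicit.
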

\begin{proof}
Parts (a) and (b) follow directly from the definition. The cyclic orientation inherited from $\dirg{G}$ is a cyclic orientation of any induced subgraph, which gives (c). To prove (d) take a directed cycle and replace any edge $v_i\to v_j$ with $j\neq i+1$ by a path $v_i\to v_{i+1}\to v_j$. After finitely many steps of this kind we get a directed cycle in which every edge is of the form $v_i\to v_{i+1}$. 
\end{proof}

\begin{figure}
\begin{center}
\begin{tabular}{ccc}
\includegraphics[scale=0.8]{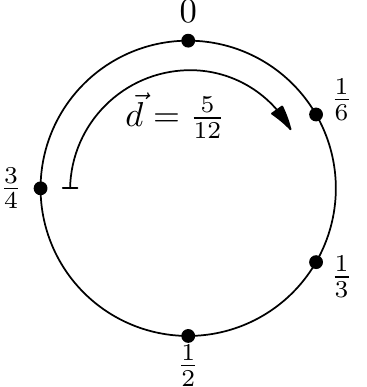} & \includegraphics[scale=0.8]{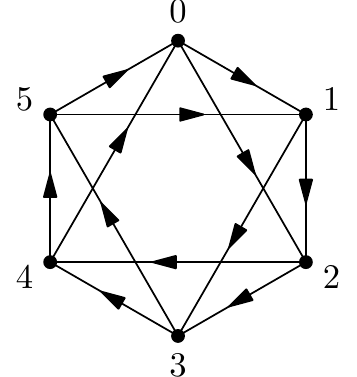} & 
\includegraphics[scale=0.8]{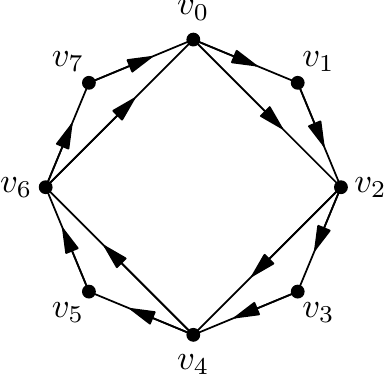}
\end{tabular}
\end{center}
\caption{\emph{(Left)} The coordinate system on $S^1$. \emph{(Middle)} The cyclic graph 
$\protect\overrightarrow{C_6^2}$. 
\emph{(Right)} A cyclic graph which is not a Vietoris--Rips graph. Its odd-numbered vertices are dominated (see Definition~\ref{def:dominated}).
}
\label{fig:firstexamples}
\end{figure}

The main examples of cyclic graphs of interest in this paper are provided in the next two definitions.

\begin{definition}
\label{def:vrdigraphs}
For a finite subset $X\subseteq S^1$ and real number $0< r<\frac12$, the directed Vietoris--Rips graphs $\vrleqdir{X}{r}$ and $\vrlessdir{X}{r}$ are defined as
$$\vrleqdir{X}{r}=\big(X,\ \{x_1\diredge x_2~:~0<\ddist(x_1,x_2)\leq r\}\big),$$
$$\vrlessdir{X}{r}=\big(X,\ \{x_1\diredge x_2~:~0<\ddist(x_1,x_2)< r\}\big).$$
\end{definition}

It is clear that the Vietoris--Rips graph is cyclic with respect to the clockwise ordering of $X$; in particular the two meanings of the symbol $\prec$ denoting clockwise order in $S^1$ and cyclic order of the vertices of $\vrdir{X}{r}$ agree.

As before, we will omit the subscript and write $\vrdir{X}{r}$ in statements which apply to both $<$ and $\leq$.  Not every cyclic graph is a Vietoris--Rips graph of a subset of $S^1$: an example is in Figure~\ref{fig:firstexamples}. Our interest in Vietoris--Rips graphs stems from the fact that a Vietoris--Rips complex is the clique complex of the corresponding undirected Vietoris--Rips graph, namely $\vrc{X}{r}=\cl(\vr{X}{r})$.

\begin{definition}
\label{def:cnkdir}
For integers $n$ and $k$ with $0\leq k < \frac12 n$, the directed graph $\cnkdir{n}{k}$ has vertex set $\{0,\ldots,n-1\}$ and edges $i\diredge {(i+s)\md{n}}$ for all $i=0,\ldots,n-1$ and $s=1,\ldots,k$. Equivalently
$$i\diredge j \quad\mbox{if and only if}\quad 0<\ddist_n(i,j)\leq k.$$
\end{definition}

The directed graphs $\cnkdir{n}{k}$ are cyclic with respect to the natural cyclic order of the vertices. Note that $\cnkdir{n}{k}$ is a Vietoris--Rips graph of the vertex set of a regular $n$-gon, or in our notation:
\begin{equation}
\label{eq:cnkvietoris}
\cnkdir{n}{k}=\vrleqdir{\{0,\tfrac{1}{n},\ldots,\tfrac{n-1}{n}\}}{\tfrac{k}{n}}.
\end{equation}
The cyclic graphs $\cnkdir{n}{k}$ will play a prominent role in our analysis of the Vietoris--Rips graphs. 

\medskip
A \emph{homomorphism of directed graphs} $f\colon\dirg{G}\to\dirg{H}$ is a vertex map such that for every edge $v\diredge w$ in $\dirg{G}$ either $f(v)=f(w)$ or there is an edge $f(v)\diredge f(w)$ in $\dirg{H}$. Directed graphs with homomorphisms form a category. We now define a class of homomorphisms for the subcategory of cyclic graphs.

\begin{definition}
\label{def:cyclichomomorphism}
Suppose $\dirg{G}$ and $\dirg{H}$ are cyclic graphs, with vertex ordering $v_0\prec\cdots\prec v_{n-1}$ in $\dirg{G}$. A vertex map $f\colon\dirg{G}\to\dirg{H}$ is a \emph{cyclic homomorphism} if $f$ is cyclic monotone, if $f$ is a homomorphism of directed graphs, and if $f$ is not constant whenever $\dirg{G}$ has a directed cycle.
\end{definition}

Note that if $\dirg{H}=\vrdir{X}{r}$ then the condition ``$f$ is cyclic monotone and not constant" is equivalent to the equation
\begin{equation}
\label{eq:degree1vr}
\sum_{i} \ddist(f(v_i),f(v_{i+1}))=1.
\end{equation}

\begin{lemma}
\label{lem:compose}
Cyclic homomorphisms have the following properties.
\begin{itemize}
\item[(a)] If $f\colon\dirg{G}\to\dirg{H}$ is a cyclic homomorphism and $\dirg{G}$ has a directed cycle then so does $\dirg{H}$.
\item[(b)] The composition of two cyclic homomorphisms is a cyclic homomorphism.
\item[(c)] The inclusion of a cyclic subgraph (with inherited cyclic orientation) is a cyclic homomorphism.
\end{itemize}
\end{lemma}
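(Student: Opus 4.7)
The plan is to handle (c) and (a) first as warm-ups and then tackle (b), whose only real content is the non-constancy condition. For (c), the inclusion of a cyclic subgraph is tautologically a graph homomorphism (subgraph edges are ambient edges); as the identity on vertices it has singleton preimages (hence subintervals) and preserves inherited cyclic order; and it is non-constant whenever the subgraph has a directed cycle, since the no-loops/no-$2$-cycles conventions force any directed cycle to have at least three vertices.

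For (a), I would first use Lemma~\ref{lem:cycliceasy}(d) to replace $\dirg{G}$'s directed cycle by the Hamiltonian cycle $v_0\to v_1\to\cdots\to v_{n-1}\to v_0$. Applying $f$ yields a closed sequence in $\dirg{H}$ whose consecutive terms either coincide or are joined by a directed edge, and since $f$ is non-constant at least one true edge occurs. Any closed walk containing at least one edge contains a directed cycle (take a shortest closed sub-walk), so $\dirg{H}$ has a directed cycle.

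For (b), the routine parts are cyclic monotonicity (preimages of subintervals under a cyclic monotone map are subintervals, so preimages of points under $g\circ f$ are subintervals) and the graph-homomorphism condition (the ``coincide-or-edge'' property is clearly closed under composition). The only genuine obstacle is non-constancy of $g\circ f$ when $\dirg{G}$ has a directed cycle, which is the hard step. I would argue by contradiction: assume $g\circ f\equiv z$ and set $T=f(V(\dirg{G}))$. Cyclic monotonicity of $f$ forces the image sequence along $\dirg{G}$'s Hamiltonian cycle to traverse $T$ in its inherited cyclic order as consecutive blocks of constancy; each block-to-block transition then furnishes a directed edge of $\dirg{H}$, so the induced subgraph on $T$ admits a Hamiltonian directed cycle $t_0\to t_1\to\cdots\to t_{|T|-1}\to t_0$, and $|T|\geq 3$ (else a $2$-cycle). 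Now $T\subseteq I:=g^{-1}(z)$, a subinterval of $V(\dirg{H})$, and by (a) the map $g$ is non-constant, so $I$ is a proper subinterval; hence the non-empty arc $V(\dirg{H})\setminus I$ is disjoint from $T$ and lies strictly between some consecutive pair $t_j\prec t_{j+1}$ in $T$'s cyclic order. Fix a boundary pair $u_q\in I$, $u_{q+1}\notin I$ with $u_{q+1}$ in this arc: Lemma~\ref{lem:cycliceasy}(d) supplies the edge $u_q\to u_{q+1}$, and iterating Lemma~\ref{lem:cycliceasy}(b) along the edge $t_j\to t_{j+1}$ produces an edge $u_{q+1}\to t_{j+1}$ in $\dirg{H}$. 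Applying $g$ to both edges yields $z\to g(u_{q+1})$ and $g(u_{q+1})\to z$ in $\dirg{K}$, a $2$-cycle contradicting our convention.
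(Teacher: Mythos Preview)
Your proof is correct and follows essentially the same strategy as the paper's: part (c) is immediate, part (a) pushes the Hamiltonian cycle of Lemma~\ref{lem:cycliceasy}(d) forward and extracts a directed cycle, and part (b) derives a forbidden $2$-cycle in $\dirg{K}$ from a vertex of $\dirg{H}$ lying cyclically between two image points but outside $g^{-1}(z)$. The paper's version of the non-constancy step is a touch more economical---it directly picks any $u$ with $g(u)\neq z$ (automatically $u\notin f(V(\dirg{G}))$), locates it between consecutive $f(v_i)\prec u\prec f(v_{i+1})$, and splits the single edge $f(v_i)\to f(v_{i+1})$ through $u$---whereas you route through the boundary of $I=g^{-1}(z)$ and use two edges; but the underlying idea is the same.
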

\begin{proof}
For (a), note that if $\dirg{G}$ has a directed cycle, then by Lemma~\ref{lem:cycliceasy}(d) it has a directed cycle $\cdots\diredge v_i\diredge v_{i+1}\diredge\cdots$ through all its vertices. The image of that cycle under $f$ is not constant, and by removing adjacent repetitions one gets a directed cycle in $\dirg{H}$.

For (b) suppose $f\colon\dirg{G}\to\dirg{H}$ and $g:\dirg{H}\to\dirg{K}$ are cyclic homomorphisms of cyclic graphs. We first check that $gf$ is cyclic monotone. Indeed, for a vertex $w$ in $\dirg{K}$ the set $(gf)^{-1}(w)$ is the preimage under $f$ of the subinterval $g^{-1}(w)$, hence a subinterval. If $gf(v)\prec gf(v')\prec gf(v'')$, then using that $f$ and then $g$ are cyclic monotone, we get $v\prec v'\prec v''$.

Now we only have to check that if $\dirg{G}$ has a directed cycle then $gf$ is not constant. By part (a), all three graphs have a directed cycle. Suppose, on the contrary, that $g(f(v_i))=w$ for all $v_i\in V( \dirg{G})$. Since $g$ is not constant there is a vertex $u$ of $\dirg{H}$ not in the image of $f$ with $g(u)\neq w$, and since $f$ is not constant there is an index $i$ such that $f(v_i)\prec u\prec f(v_{i+1})$ is cyclically ordered in $\dirg{H}$. Since $v_i\diredge v_{i+1}$ in $\dirg{G}$ we have $f(v_i)\diredge f(v_{i+1})$ in $\dirg{H}$. That in turn implies $f(v_i)\diredge u\diredge f(v_{i+1})$ in $\dirg{H}$ and therefore $w\to g(u)\to w$ in $\dirg{K}$. This contradicts our definition of a directed graph (no edges oriented in both directions), and hence $gf$ is not constant.

Part (c) is clear.
\end{proof}

\medskip
We can now define the main numerical invariant of cyclic graphs.
\begin{definition}
\label{def:windingfraction}
The \emph{winding fraction} of a cyclic graph $\dirg{G}$ is
\begin{equation*}
\label{eq:defwf}
\wf(\dirg{G}) = \sup\Big\{\frac{k}{n}~:~ \mathrm{there\ exists\ a\ cyclic\ homomorphism}\ \cnkdir{n}{k}\to\dirg{G} \Big\}.
\end{equation*}
\end{definition}
For a finite subset $X\subseteq S^1$ we also introduce a shorthand notation
$$\wfcleq{X}{r}=\wf(\vrleqdir{X}{r}), \ \wfcless{X}{r}=\wf(\vrlessdir{X}{r}).$$

The next proposition records the basic properties of the winding fraction.

\begin{proposition}
\label{prop:wfbasic}
The winding fraction satisfies the following properties.
\begin{itemize}
\item[(a)] $\wf(\dirg{G})>0$ if and only if $\dirg{G}$ has a directed cycle.
\item[(b)] If $\dirg{G}\to\dirg{H}$ is a cyclic homomorphism then $\wf(\dirg{G})\leq\wf(\dirg{H})$.
\item[(c)] If $X\subseteq S^1$ is a finite set and $0< r<\frac12$ then $$\wfcleq{X}{r}\leq r,\quad \wfcless{X}{r}<r,\quad \wfcless{X}{r}\leq\wfcleq{X}{r}.$$
\item[(d)] $\wf(\dirg{\cnk{n}{k}})=\frac{k}{n}$.
\end{itemize}
\end{proposition}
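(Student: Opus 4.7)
Parts (a) and (b) should come quickly from the machinery already established. For (b), composing a witnessing cyclic homomorphism $\cnkdir{n}{k}\to\dirg{G}$ with the given $\dirg{G}\to\dirg{H}$ gives a cyclic homomorphism $\cnkdir{n}{k}\to\dirg{H}$ by Lemma~\ref{lem:compose}(b), so every ratio $k/n$ realized by $\dirg{G}$ is realized by $\dirg{H}$. For (a), if $\wf(\dirg{G})>0$ then some cyclic homomorphism $\cnkdir{n}{k}\to\dirg{G}$ with $k\geq 1$ exists; since $\cnkdir{n}{k}$ contains the directed $n$-cycle $0\diredge 1\diredge\cdots\diredge n{-}1\diredge 0$, Lemma~\ref{lem:compose}(a) transports this cycle to $\dirg{G}$. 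Conversely, if $\dirg{G}$ has a directed cycle, Lemma~\ref{lem:cycliceasy}(d) gives the edges $v_i\diredge v_{i+1}$ throughout $\dirg{G}$, and then $i\mapsto v_i$ is a cyclic homomorphism $\cnkdir{n}{1}\to\dirg{G}$, so $\wf(\dirg{G})\geq 1/n>0$.

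The heart of the proposition is the bound $\wfcleq{X}{r}\leq r$ in (c). Given any cyclic homomorphism $f\colon\cnkdir{n}{k}\to\vrleqdir{X}{r}$, I would lift to the universal cover $\RR\to S^1$ to obtain a nondecreasing sequence $\tilde f\colon\ZZ\to\RR$ with $\tilde f(i+1)-\tilde f(i)=\ddist(f(i),f(i+1))\leq r$ and, by equation~(\ref{eq:degree1vr}), $\tilde f(i+n)=\tilde f(i)+1$. The plan is to show that $\tilde f(i+k)-\tilde f(i)<1$ for every $i$; granted this, $\ddist(f(i),f(i+k))$ coincides with $\tilde f(i+k)-\tilde f(i)$, and summing the edge inequalities $\ddist(f(i),f(i+k))\leq r$ (coming from the edges $i\diredge i+k$ of $\cnkdir{n}{k}$) telescopes to $k=\sum_i(\tilde f(i+k)-\tilde f(i))\leq nr$, i.e.\ $k/n\leq r$.

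The no-wrap claim is the one step I expect to require real care. I will argue by contradiction: if $\tilde f(i+k)-\tilde f(i)=1$ for some $i$, monotonicity together with $\tilde f(i+n)-\tilde f(i)=1$ forces $\tilde f$ to be constant on $[i+k,i+n]$. Picking the first index $i'\in[i,i+k-1]$ at which $f$ takes a nontrivial step, the edges $i'\diredge i'{+}1$ and $i'{+}1\diredge i'{+}1{+}k$ of $\cnkdir{n}{k}$ give $\ddist(f(i'),f(i'+1))\leq r$ and $\ddist(f(i'+1),f(i'+1+k))\leq r$. The hypothesis $k<n/2$ ensures $i'{+}1{+}k$ lies in the constant interval $[i+k,i+n]$, so $\tilde f(i'+1+k)=\tilde f(i)+1$ and $\tilde f(i')=\tilde f(i)$, which forces these two clockwise distances to sum to $1$; hence $r\geq 1/2$, contradicting $r<\tfrac12$. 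The delicate part is handling the distinction between clockwise distance in $S^1$ and the monotone difference in the lift, together with the modular index arithmetic.

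The remaining statements follow easily. For $\wfcless{X}{r}<r$, setting $r^*=\max\{\ddist(x,y)\mid x\diredge y\in\vrlessdir{X}{r}\}<r$ (or noting there are no directed cycles and applying (a) if the edge set is empty) gives $\vrlessdir{X}{r}=\vrleqdir{X}{r^*}$, and the bound just proved yields $\wfcless{X}{r}=\wfcleq{X}{r^*}\leq r^*<r$. The inequality $\wfcless{X}{r}\leq\wfcleq{X}{r}$ comes from (b) applied to the identity, which is a cyclic homomorphism $\vrlessdir{X}{r}\to\vrleqdir{X}{r}$. Finally, for (d), equation~(\ref{eq:cnkvietoris}) identifies $\cnkdir{n}{k}$ with $\vrleqdir{\{0,\tfrac{1}{n},\ldots,\tfrac{n-1}{n}\}}{k/n}$, so (c) yields $\wf(\cnkdir{n}{k})\leq k/n$, while the identity map $\cnkdir{n}{k}\to\cnkdir{n}{k}$ witnesses the reverse inequality.
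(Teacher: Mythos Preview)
Your proof is correct. Parts (a), (b), (d), and the last inequality of (c) match the paper's arguments essentially verbatim. For the key bound $\wfcleq{X}{r}\leq r$, the paper gives a terser covering argument: since every arc $[f(j),f(j+1)]_{S^1}$ is covered by exactly $k$ of the arcs $[f(i),f(i+k)]_{S^1}$, summing the inequalities $\ddist(f(i),f(i+k))\leq r$ over all $i$ yields $nr\geq k\sum_j\ddist(f(j),f(j+1))=k$ directly from~\eqref{eq:degree1vr}. Your lift-to-$\RR$ argument is the same telescoping made explicit, and your careful treatment of the no-wraparound claim $\tilde f(i+k)-\tilde f(i)<1$ spells out a step the paper leaves implicit in the phrase ``covered by exactly $k$ arcs''. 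Your reduction of the strict inequality $\wfcless{X}{r}<r$ to the non-strict case via $r^*=\max\{\ddist(x,y):x\diredge y\}<r$ and the identification $\vrlessdir{X}{r}=\vrleqdir{X}{r^*}$ is a clean alternative to the paper's one-line ``similar proof with strict inequalities''.
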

\begin{proof} 
For (a) note that if $\dirg{G}$ has a directed cycle, then by Lemma~\ref{lem:cycliceasy}(d) the map $i\mapsto v_i$ defines a cyclic homomorphism $\cnkdir{n}{1}\to\dirg{G}$ with $n=|V(\dirg{G})|$. Conversely, if $\dirg{G}$ has no directed cycle then by Lemma~\ref{lem:compose}(a) it admits cyclic homomorphisms only from the graphs $\cnkdir{n}{0}$.

Part (b) follows from the definition of the winding fraction and the fact that a composition of cyclic homomorphisms is a cyclic homomorphism. 

Now we prove the first inequality of (c). Suppose that $f\colon\cnkdir{n}{k}\to\vrleqdir{X}{r}$ is a cyclic homomorphism with $k\geq 1$, which means that for every $i=0,\ldots,n-1$ we have $\ddist(f(i),f(i+k))\leq r$. Since every arc of the form $[f(j),f(j+1)]_{S^1}$ is covered by exactly $k$ arcs $[f(i),f(i+k)]_{S^1}$, we have
$$nr\geq \sum_i \ddist(f(i),f(i+k))=k\sum_j \ddist(f(j),f(j+1))=k,$$
where in the last equality we used \eqref{eq:degree1vr}. It follows that $\frac{k}{n}\leq r$ and $\wfcleq{X}{r}\leq r$.

The second inequality has similar proof with strict inequalities and the third one follows from (b) since we have a subgraph inclusion $\vrlessdir{X}{r}\incl \vrleqdir{X}{r}$.

For (d), the identity automorphism of $\cnkdir{n}{k}$ shows $\wf(\cnkdir{n}{k})\geq \frac{k}{n}$. Conversely, applying part (c) with $X=\{0,\frac{1}{n},\ldots,\frac{n-1}{n}\}$ and $r=\frac{k}{n}$ gives, by \eqref{eq:cnkvietoris}, that $\wf(\cnkdir{n}{k})\leq \frac{k}{n}$.
\end{proof}

\medskip
We now describe a practical way of computing the winding fraction. The method uses graph reductions modelled on the notion of \emph{dismantlings} of undirected graphs (called \emph{folds} in \cite[Section 2.11]{HellNesetril2004} or \emph{LC reductions} in \cite{Matouvsek2008}), and hence we use the same terminology.

\begin{definition}
\label{def:dominated}
Suppose $\dirg{G}$ is a cyclic graph with vertex ordering $v_0\prec\cdots\prec v_{n-1}$. A vertex $v_i$ is called \emph{dominated by} $v_{i+1}$ (or just \emph{dominated}) if $N^-(\dirg{G},v_{i+1})=N^-[\dirg{G},v_i]$.
\end{definition}

\begin{lemma}
\label{lem:removedominated}
If $\dirg{G}$ is a cyclic graph and $v_i$ is dominated by $v_{i+1}$, then the map $f\colon\dirg{G}\to\dirg{G}\setminus v_i$ given by
\begin{equation*}
f(v_j)=\begin{cases}v_j &\mathrm{if}\ j\neq i\\ v_{i+1}&\mathrm{if}\ j=i\end{cases}
\end{equation*}
is a cyclic homomorphism. The composition $\dirg{G}\setminus v_i\incl \dirg{G}\xrightarrow{f}\dirg{G}\setminus v_i$ is the identity.
\end{lemma}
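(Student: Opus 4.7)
The plan is to verify in turn the three defining properties of a cyclic homomorphism (cyclic monotonicity, being a directed graph homomorphism, non-constancy when $\dirg{G}$ has a directed cycle), and then to observe that the final composition assertion is immediate from the definition of $f$. The key inputs I expect to use are the domination hypothesis $N^-(\dirg{G},v_{i+1})=N^-[\dirg{G},v_i]$ itself, together with Lemma~\ref{lem:cycliceasy}(b), which gives the dual inclusion $N^+(\dirg{G},v_i)\subseteq N^+[\dirg{G},v_{i+1}]$ that holds for \emph{every} cyclic graph. Together these two facts are exactly what is needed to reroute every edge incident to $v_i$ through $v_{i+1}$.

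For cyclic monotonicity, I would note that $f$ equips $\dirg{G}\setminus v_i$ with its inherited cyclic order $v_{i+1}\prec v_{i+2}\prec\cdots\prec v_{i-1}$, and that the fibers are $f^{-1}(v_{i+1})=\{v_i,v_{i+1}\}$ (a two-element subinterval of $\dirg{G}$) and $f^{-1}(v_j)=\{v_j\}$ for $j\neq i,i+1$, each a subinterval. The betweenness condition then reduces to the observation that $f$ restricted to $V(\dirg{G})\setminus\{v_i\}$ is order preserving and $v_i$ lies between $v_{i-1}$ and $v_{i+1}$, so no triple can be reordered by $f$.

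For the directed homomorphism condition I would argue by cases on an edge $v_j\diredge v_k$ of $\dirg{G}$. If neither endpoint is $v_i$, the same edge exists in $\dirg{G}\setminus v_i$. If $k=i$ (so $v_j\in N^-[\dirg{G},v_i]$ with $j\neq i$), the domination hypothesis gives $v_j\in N^-(\dirg{G},v_{i+1})$, i.e.\ $v_j\diredge v_{i+1}$, and this edge lies in $\dirg{G}\setminus v_i$. If $j=i$ and $k=i+1$, then $f(v_i)=f(v_{i+1})$ and the edge condition is satisfied by equality. If $j=i$ and $k\neq i+1$, then Lemma~\ref{lem:cycliceasy}(b) puts $v_k\in N^+[\dirg{G},v_{i+1}]$, and since $v_k\neq v_{i+1}$ this forces $v_{i+1}\diredge v_k$, again an edge of $\dirg{G}\setminus v_i$.

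Finally, if $\dirg{G}$ contains a directed cycle, then since $\dirg{G}$ has no loops or doubly oriented edges, any such cycle has length at least $3$, so $n\geq 3$ and $\dirg{G}\setminus v_i$ has at least two vertices; as $f$ is surjective onto $V(\dirg{G}\setminus v_i)$ it is not constant. For the composition, for any $v_j\in V(\dirg{G}\setminus v_i)$ we have $j\neq i$, so the inclusion sends $v_j$ to itself in $\dirg{G}$ and then $f(v_j)=v_j$, completing the proof. The whole argument is essentially a bookkeeping exercise, so I do not expect a genuine obstacle; the one subtlety worth flagging is the duality between the domination hypothesis (handling in-edges at $v_i$) and Lemma~\ref{lem:cycliceasy}(b) (handling out-edges at $v_i$), which together cover all edges incident to the vertex being removed.
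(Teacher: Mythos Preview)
Your proposal is correct and follows essentially the same approach as the paper's own proof: case analysis on edges incident to $v_i$ using the domination hypothesis for in-edges and the cyclic condition (your Lemma~\ref{lem:cycliceasy}(b)) for out-edges, together with the observation that a directed cycle forces $n\geq 3$ so $f$ is not constant. The only difference is that you spell out the cyclic monotonicity verification in more detail where the paper simply asserts that $f$ ``clearly preserves the cyclic ordering.''
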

\begin{proof}
We first check that $f$ is a homomorphism of directed graphs. First note the map $f$ preserves all edges avoiding $v_i$. If $v_k\to v_i$ then $v_k\to v_{i+1}$ because $v_i$ is dominated by $v_{i+1}$. If $v_i\to v_k$ then either $k=i+1$, and then $f(v_i)=f(v_k)$, or there is also an edge $v_{i+1}\to v_k$ because $\dirg{G}$ is cyclic.

The map $f$ is a cyclic homomorphism because it clearly preserves the cyclic ordering, and if $\dirg{G}$ has a directed cycle then it has at least three vertices, in which case $f$ is not constant.

The last claim is obvious.
\end{proof}

The removal of a dominated vertex can be repeated as long as the new graph has a dominated vertex.

\begin{definition}
\label{def:dismantling}
We say a cyclic graph $\dirg{G}$ \emph{dismantles} to an induced subgraph $\dirg{H}$ if there is a sequence of graphs $\dirg{G}=\dirg{G}_0,\dirg{G}_1,\ldots,\dirg{G}_s=\dirg{H}$ such that $\dirg{G}_i$ is obtained from $\dirg{G}_{i-1}$ by removing a dominated vertex for $i=1,\ldots,s$. 
\end{definition}

If $\dirg{G}$ dismantles to $\dirg{H}$ then the composition of cyclic homomorphisms $\dirg{G}_i\to\dirg{G}_{i+1}$ provided by Lemma~\ref{lem:removedominated} gives a cyclic homomorphism $\dirg{G}\to\dirg{H}$. Moreover the composition
$$\dirg{H}\incl \dirg{G}\to\dirg{H}$$
is the identity of $\dirg{H}$. The next proposition answers the question of when the dismantling process of a cyclic graph must stop.

\begin{proposition}
\label{prop:cnkisfinal}
A cyclic graph without a dominated vertex is isomorphic to $\cnkdir{n}{k}$ for some $0\leq k<\frac12 n$. As a consequence every cyclic graph dismantles to an induced subgraph of the form $\cnkdir{n}{k}$.
\end{proposition}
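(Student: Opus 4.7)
The plan is to exploit the no-dominated-vertex hypothesis to force all in-degrees of $\dirg{G}$ to coincide, after which the edge set is visibly that of some $\cnkdir{n}{k}$. Write the vertices as $v_0\prec\cdots\prec v_{n-1}$ and let $k_i = |N^-(\dirg{G},v_i)|$. By Lemma~\ref{lem:cycliceasy}(a) the closed in-neighborhood $N^-[\dirg{G},v_i]=\{v_{i-k_i},\ldots,v_i\}$ is an interval of length $k_i+1$ ending at $v_i$, while $N^-(\dirg{G},v_{i+1})=\{v_{i+1-k_{i+1}},\ldots,v_i\}$ is an interval of length $k_{i+1}$ also ending at $v_i$. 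Comparing the two intervals, the domination condition $N^-(\dirg{G},v_{i+1})=N^-[\dirg{G},v_i]$ from Definition~\ref{def:dominated} becomes the numerical equality $k_{i+1}=k_i+1$.

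Next I will combine this with the containment $N^-(\dirg{G},v_{i+1})\subseteq N^-[\dirg{G},v_i]$ supplied by Lemma~\ref{lem:cycliceasy}(b), which already gives $k_{i+1}\leq k_i+1$. Under the hypothesis that no $v_i$ is dominated, equality is excluded, leaving $k_{i+1}\leq k_i$ for every $i$. Going once around the cyclic order yields $k_0\geq k_1\geq\cdots\geq k_{n-1}\geq k_0$, forcing $k_0=k_1=\cdots=k_{n-1}$; call this common value $k$. The in-neighborhood description then says the edges of $\dirg{G}$ are precisely $v_i\diredge v_{i+s}$ for $i=0,\ldots,n-1$ and $s=1,\ldots,k$, matching Definition~\ref{def:cnkdir}. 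The constraint $k<\tfrac12 n$ follows because an opposite pair $v_0\diredge v_k$, $v_k\diredge v_0$ would otherwise exist as soon as $n-k\leq k$, contradicting the standing convention on directed graphs. The degenerate case $k=0$ simply produces the empty-edge graph $\cnkdir{n}{0}$ and requires no separate treatment.

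For the consequence I will iterate Lemma~\ref{lem:removedominated}: as long as the current cyclic graph admits a dominated vertex, remove it to obtain an induced subgraph, which remains cyclic by Lemma~\ref{lem:cycliceasy}(c). Since the vertex count strictly decreases, the procedure halts at some induced subgraph $\dirg{H}$ with no dominated vertex, and the first part identifies $\dirg{H}\iso\cnkdir{n'}{k'}$ for some $0\leq k'<\tfrac12 n'$.

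The main conceptual step is the translation of the local domination criterion into the additive relation $k_{i+1}=k_i+1$; once that is in hand, the finiteness of the cyclic index set propagates the strict inequality $k_{i+1}\leq k_i$ into uniform equality, and the remainder is bookkeeping.
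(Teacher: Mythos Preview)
Your proof is correct and somewhat more direct than the paper's. Both arguments reach the conclusion that all in-degrees $k_i=|N^-(\dirg{G},v_i)|$ coincide, but they get there differently. The paper introduces the out-neighborhood endpoints $e(j)$ with $N^+[\dirg{G},v_j]=\{v_j,\ldots,v_{e(j)}\}$, identifies $N^-[\dirg{G},v_i]\setminus N^-(\dirg{G},v_{i+1})=\{v_j:e(j)=i\}$, and sums these cardinalities to $n$; since no vertex is dominated each summand is positive, so each equals $1$ and hence $k_{i+1}=k_i$. Your route bypasses the $e(j)$ bookkeeping: from the containment $N^-(\dirg{G},v_{i+1})\subseteq N^-[\dirg{G},v_i]$ you get $k_{i+1}\leq k_i+1$, and since both sets are intervals terminating at $v_i$, equality of sets is equivalent to $k_{i+1}=k_i+1$; ruling this out yields $k_{i+1}\leq k_i$, and cycling through the indices forces equality. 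The paper's approach makes the combinatorial structure (each vertex is the ``endpoint'' of exactly one out-interval) explicit, which is mildly informative, but your monotonicity argument is shorter and uses nothing beyond Lemma~\ref{lem:cycliceasy}(a),(b).
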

\begin{proof}
Let $\dirg{G}$ be a cyclic graph with vertex ordering $v_0\prec\cdots\prec v_{n-1}$ and with no dominated vertex. By Lemma~\ref{lem:cycliceasy}(a) for every $j=0,\ldots,n-1$ there is an $e(j)$ such that $N^+[\dirg{G},v_j]=\{v_{j},\ldots,v_{e(j)}\}$. For every $i=0,\ldots,n-1$ we have
$$N^-[\dirg{G},v_i]\setminus N^-(\dirg{G},v_{i+1})\ =\ \{v_j~:~e(j)=i\},$$
where $N^-(\dirg{G},v_{i+1})\subseteq N^-[\dirg{G},v_i]$ by Lemma~\ref{lem:cycliceasy}(b). It follows that
$$\sum_i|N^-[\dirg{G},v_i]\setminus N^-(\dirg{G},v_{i+1})|=n.$$
Since $\dirg{G}$ has no dominated vertices, all $n$ summands above are positive and therefore all are equal to $1$. We have
$$|N^-(\dirg{G},v_{i+1})|=|N^-[\dirg{G},v_i]|-1=|N^-(\dirg{G},v_i)\cup\{v_i\}|-1=|N^-(\dirg{G},v_i)|.$$
Denote the common value of $|N^-(\dirg{G},v_i)|$ by $k$. Using Lemma~\ref{lem:cycliceasy}(a) again we see that $N^-[\dirg{G},v_i]=\{v_{i-k},\ldots,v_i\}$ for all $i$, and so $\dirg{G}$ is isomorphic to $\cnkdir{n}{k}$.
\end{proof}

\begin{remark}
\label{rem:uniquecore}
In \cite{AAM} we prove that, regardless of the choices of a dominated vertex made in the process, every dismantling of a cyclic graph ends up with the same subgraph. Such strong uniqueness is not needed in this paper. 

Our notion is modelled on the more classical dismantling of undirected graphs (see \cite{HellNesetril2004}). In that setting the end result of the dismantling process is unique only up to isomorphism (see \cite{Matouvsek2008} or \cite[Theorem 2.60]{HellNesetril2004}).
\end{remark}

We can now give a recipe for computing the winding fraction.

\begin{proposition}
\label{prop:wfofcore}
If a cyclic graph $\dirg{G}$ dismantles to $\cnkdir{n}{k}$ then $\wf(\dirg{G})=\frac{k}{n}$.
\end{proposition}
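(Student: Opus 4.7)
The plan is to extract two inequalities, one from each direction of the dismantling, and combine them.

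First, I would use the forward direction. Since $\dirg{G}$ dismantles to $\cnkdir{n}{k}$, by Lemma~\ref{lem:removedominated} and Definition~\ref{def:dismantling} (together with Lemma~\ref{lem:compose}(b) for composing the single-step maps), there is a cyclic homomorphism $\dirg{G}\to\cnkdir{n}{k}$. Applying Proposition~\ref{prop:wfbasic}(b) and then (d) gives
\[
\wf(\dirg{G})\leq\wf(\cnkdir{n}{k})=\tfrac{k}{n}.
\]

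For the reverse inequality, I would use that $\cnkdir{n}{k}$ is an induced subgraph of $\dirg{G}$. By Lemma~\ref{lem:compose}(c), the inclusion $\cnkdir{n}{k}\incl\dirg{G}$ is a cyclic homomorphism, so again by Proposition~\ref{prop:wfbasic}(b) and (d),
\[
\tfrac{k}{n}=\wf(\cnkdir{n}{k})\leq\wf(\dirg{G}).
\]
Combining the two inequalities yields $\wf(\dirg{G})=\tfrac{k}{n}$.

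There is essentially no obstacle here: all the work has already been done in assembling the monotonicity of $\wf$ under cyclic homomorphisms (Proposition~\ref{prop:wfbasic}(b)), the computation $\wf(\cnkdir{n}{k})=\tfrac{k}{n}$ (Proposition~\ref{prop:wfbasic}(d)), and the fact that dismantling produces a cyclic homomorphism onto the dismantled subgraph (Lemma~\ref{lem:removedominated}). The only thing to be mildly careful about is that in the degenerate case $k=0$ the inclusion $\cnkdir{n}{0}\incl\dirg{G}$ is still a cyclic homomorphism because $\cnkdir{n}{0}$ has no directed cycle, so the non-constancy clause of Definition~\ref{def:cyclichomomorphism} is vacuous; the chain of inequalities then gives $\wf(\dirg{G})=0$, consistent with Proposition~\ref{prop:wfbasic}(a).
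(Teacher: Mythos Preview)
Your proof is correct and follows exactly the same approach as the paper's own proof: use the cyclic homomorphisms $\cnkdir{n}{k}\incl\dirg{G}$ and $\dirg{G}\to\cnkdir{n}{k}$ coming from dismantling, together with Proposition~\ref{prop:wfbasic}(b) and (d). You have simply spelled out the citations in more detail and added a harmless remark about the $k=0$ case.
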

\begin{proof}
The graph $\dirg{G}$ has both cyclic homomorphisms $\cnkdir{n}{k}\incl \dirg{G}$ and $\dirg{G}\to\cnkdir{n}{k}$, so the claim follows from Proposition~\ref{prop:wfbasic} parts (b) and (d).
\end{proof}

The following result gives the converse of Proposition~\ref{prop:wfbasic}(b).

\begin{proposition}
\label{prop:cychomoall}
There is a cyclic homomorphism $f\colon\dirg{G}\to\dirg{H}$ if and only if $\wf(\dirg{G})\leq\wf(\dirg{H})$.
\end{proposition}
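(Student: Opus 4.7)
The forward implication is exactly Proposition~\ref{prop:wfbasic}(b). For the converse, the plan is to reduce to the canonical case and then construct an explicit bridging map. By Propositions~\ref{prop:cnkisfinal} and~\ref{prop:wfofcore}, $\dirg{G}$ dismantles to some $\cnkdir{n}{k}$ with $k/n=\wf(\dirg{G})$ and $\dirg{H}$ dismantles to some $\cnkdir{n'}{k'}$ with $k'/n'=\wf(\dirg{H})$. Composing the cyclic homomorphisms supplied by Lemma~\ref{lem:removedominated} yields a cyclic homomorphism $\dirg{G}\to\cnkdir{n}{k}$, and Lemma~\ref{lem:compose}(c) provides an inclusion $\cnkdir{n'}{k'}\hookrightarrow\dirg{H}$. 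So the whole problem reduces to constructing a cyclic homomorphism $\cnkdir{n}{k}\to\cnkdir{n'}{k'}$ under the hypothesis $k/n\leq k'/n'$.

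If $k=0$ then the source is edgeless and any constant map is cyclic (non-constancy is vacuous since there is no directed cycle). When $k\geq 1$ (which forces $k'\geq 1$ via $k'/n'\geq k/n>0$), I would realize the bridging map as the composition
\begin{equation*}
\cnkdir{n}{k}\xrightarrow{\phi}\cnkdir{nn'}{kn'}\xrightarrow{\id}\cnkdir{nn'}{k'n}\xrightarrow{\psi}\cnkdir{n'}{k'},
\end{equation*}
with $\phi(i)=in'$ and $\psi(i)=\lfloor i/n\rfloor$. The identity map in the middle is a cyclic homomorphism since the inequality $k/n\leq k'/n'$ rewrites as $kn'\leq k'n$, so the edge set of $\cnkdir{nn'}{kn'}$ is contained in that of $\cnkdir{nn'}{k'n}$. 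For $\phi$ and $\psi$, preimages of points are subintervals (singletons or empty for $\phi$; the length-$n$ blocks $\{jn,\ldots,jn+n-1\}$ for $\psi$), both maps are weakly increasing on the natural vertex orderings and are therefore cyclic monotone, and both are non-constant because $n,n'\geq 3$ whenever $k,k'\geq 1$. For the homomorphism property, an edge of forward cyclic distance $s$ (with $1\leq s\leq k$) in the source of $\phi$ maps to a pair of forward cyclic distance $sn'\leq kn'$, and an edge $i\to i+s$ (with $1\leq s\leq k'n$) in the source of $\psi$ maps to a pair whose forward cyclic distance equals $\lfloor(i+s)/n\rfloor-\lfloor i/n\rfloor\leq\lceil s/n\rceil\leq k'$, hence is either $0$ (an identification) or a valid edge of $\cnkdir{n'}{k'}$.

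The main obstacle is keeping track of wrap-around in the homomorphism checks: if $i+s$ exceeds the modulus of the source, the image forward cyclic distance is not the unsigned difference of images but requires adding the codomain modulus. A short case split using the identity $\lfloor(i+s-nn')/n\rfloor=\lfloor(i+s)/n\rfloor-n'$ (and its analogue for $\phi$) reduces the wrap-around case to the same floor-function estimate as the non-wrap-around case, so the bound above remains valid. With all three arrows confirmed to be cyclic homomorphisms, Lemma~\ref{lem:compose}(b) lets me compose $\dirg{G}\to\cnkdir{n}{k}\to\cnkdir{n'}{k'}\hookrightarrow\dirg{H}$ to produce the required cyclic homomorphism.
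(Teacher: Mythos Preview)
Your proof is correct and follows essentially the same approach as the paper: reduce via dismantling to the canonical graphs, then bridge $\cnkdir{n}{k}\to\cnkdir{n'}{k'}$ through the common refinement $\cnkdir{nn'}{\bullet}$ using the ``multiply by $n'$'' and ``divide by $n$'' maps (the paper calls these $\iota$ and $\tau$). The paper dispatches the verification that these are cyclic homomorphisms with a single sentence, whereas you spell out the floor-function estimate and the wrap-around case; your separate treatment of $k=0$ is also fine, though the paper's composition handles it uniformly since non-constancy is vacuous there.
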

\begin{proof}
The ``only if'' part is handled by Proposition~\ref{prop:wfbasic}(b). 

For any $0\leq k<\frac12 n$ and $d\geq 1$ consider two maps $\iota: \cnkdir{n}{k}\to\cnkdir{nd}{kd}$ and $\tau:\cnkdir{nd}{kd}\to\cnkdir{n}{k}$ given by
$$\iota(i)=di, \quad \tau(j)=\lfloor \tfrac{j}{d}\rfloor.$$
It easy to see that $\iota$ and $\tau$ are cyclic homomorphisms. 

To prove the ``if'' part, suppose that $\dirg{G}$ dismantles to $\cnkdir{n}{k}$ and $\dirg{H}$ dismantles to $\cnkdir{n'}{k'}$. Proposition~\ref{prop:wfofcore} and the assumption $\wf(\dirg{G})\leq\wf(\dirg{H})$ imply $\frac{k}{n}\leq\frac{k'}{n'}$. Then we have a cyclic homomorphism
$$\dirg{G}\to \cnkdir{n}{k}\xrightarrow{\iota}\cnkdir{nn'}{kn'}\incl\cnkdir{nn'}{nk'}\xrightarrow{\tau}\cnkdir{n'}{k'}\incl\dirg{H}$$
where the first and last map come from dismantling, and the middle map is a subgraph inclusion since $kn'\leq nk'$.
\end{proof}

The winding fraction is in a sense dual to the well-studied concept of \emph{circular chromatic number}, see \cite[Chapter 6]{HellNesetril2004}. For an arbitrary undirected graph $G$ the circular chromatic number $\chi_c(G)$ is defined as the infimum over numbers $\frac{n}{k}$ such that there is a map $V(G)\to\ZZ/n$ which maps every edge to a pair of numbers \emph{at least} $k$ apart. By Proposition~\ref{prop:cychomoall} we have
$$\wf(\dirg{G}) = \inf\Big\{\frac{k}{n}~:~ \mathrm{there\ exists\ a\ cyclic\ homomorphism}\ \dirg{G}\to\cnkdir{n}{k} \Big\}$$
which leads to the following description: $\wf(\dirg{G})$ is the infimum over numbers $\frac{k}{n}$ such that there is an order-preserving map $V(G)\to\ZZ/n$ which maps every edge to a pair of numbers \emph{at most} $k$ apart.

\section{Winding fractions determine homotopy types}
\label{sect:winding}

We now analyze the influence of the winding fraction $\wf(\dirg{G})$ on the topology of the clique complex $\cl(G)$.

A homomorphism $f\colon G\to H$ of undirected graphs is a vertex map such that $v\sim w$ implies $f(v)=f(w)$ or $f(v)\sim f(w)$. Every homomorphism of directed graphs $\dirg{G}\to\dirg{H}$ determines a homomorphism of the underlying undirected graphs $G\to H$, and in turn also a simplicial map $\cl(G)\to\cl(H)$. The assignment $\dirg{G}\mapsto\cl(G)$ is a functor from the category of directed graphs to topological spaces, and also a functor from the subcategory of cyclic graphs to topological spaces.

\begin{lemma}
\label{lem:removedominatedclique}
If $\dirg{G}$ is a cyclic graph and $v_i$ is a dominated vertex, then the cyclic homomorphisms $\dirg{G}\setminus v_i\incl \dirg{G}$ and $\dirg{G}\to\dirg{G}\setminus v_i$ from Lemma~\ref{lem:removedominated} induce homotopy equivalences of clique complexes. 
\end{lemma}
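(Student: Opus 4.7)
The plan is to reduce the directed domination to the classical undirected domination (a ``fold'') of clique complexes, and then invoke the standard contiguity argument.

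First, I would unpack the hypothesis $N^-(\dirg{G}, v_{i+1}) = N^-[\dirg{G}, v_i]$ and combine it with Lemma~\ref{lem:cycliceasy}(b) to establish the undirected inclusion $N[G, v_i] \subseteq N[G, v_{i+1}]$. In detail, the hypothesis itself shows $v_i \to v_{i+1}$ (so $v_i \sim v_{i+1}$ in $G$) and, for any in-neighbor $w$ of $v_i$, that $w \to v_{i+1}$, so $w \in N(G, v_{i+1})$. For an out-neighbor $w$ of $v_i$ with $w \neq v_{i+1}$, Lemma~\ref{lem:cycliceasy}(b) forces $w \in N^+(\dirg{G}, v_{i+1}) \subseteq N(G, v_{i+1})$. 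Together with $v_i \in N[G, v_{i+1}]$, this gives the claimed inclusion of closed neighborhoods in the underlying undirected graph.

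Next, I would apply the standard fold argument for flag complexes. Set $K = \cl(G)$ and $K' = \cl(G \setminus v_i)$, let $\iota\colon K' \hookrightarrow K$ denote the inclusion, and let $f\colon K \to K'$ be the simplicial extension of the vertex map sending $v_i$ to $v_{i+1}$ and fixing all other vertices. The map $f$ is simplicial: if $\sigma \in K$ contains $v_i$, then $\sigma \setminus \{v_i\} \subseteq N(G,v_i) \subseteq N[G, v_{i+1}]$, so $(\sigma \setminus \{v_i\}) \cup \{v_{i+1}\}$ is a clique in $G$, hence a simplex of $K'$. Clearly $f \circ \iota = \id_{K'}$. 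To see that $\iota \circ f$ is homotopic to $\id_K$, I would verify that these two simplicial self-maps of $K$ are contiguous: for any $\sigma \in K$, the union $\sigma \cup (\iota \circ f)(\sigma)$ is either $\sigma$ (when $v_i \notin \sigma$) or $\sigma \cup \{v_{i+1}\}$ (when $v_i \in \sigma$), and the latter is again a clique because every vertex of $\sigma$ other than $v_i$ lies in $N[G, v_{i+1}]$ and $v_i \sim v_{i+1}$.

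There is no real obstacle here — the only subtle point is translating the asymmetric directed condition $N^-(\dirg{G}, v_{i+1}) = N^-[\dirg{G}, v_i]$ into the symmetric closed-neighborhood containment, which is where the cyclicity hypothesis (via Lemma~\ref{lem:cycliceasy}(b)) is essential; without it the out-neighbors of $v_i$ need not all be covered by $v_{i+1}$. Once the inclusion $N[G, v_i] \subseteq N[G, v_{i+1}]$ is in hand, the rest is the standard dismantling/folding argument for flag complexes.
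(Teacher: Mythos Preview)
Your proof is correct and follows essentially the same approach as the paper: both first derive the undirected closed-neighborhood containment $N[G,v_i]\subseteq N[G,v_{i+1}]$ from the domination hypothesis together with Lemma~\ref{lem:cycliceasy}(b), and then invoke a standard flag-complex argument. The only cosmetic difference is that the paper phrases the second step as ``the link of $v_i$ is a cone with apex $v_{i+1}$, so $\cl(G)$ is obtained from $\cl(G\setminus v_i)$ by attaching a cone over a cone,'' whereas you use the equivalent contiguity/fold formulation.
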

\begin{proof}
Using the conditions listed in Lemma~\ref{lem:cycliceasy}(b) and Definition~\ref{def:dominated} we get
\begin{equation*}
N[G,v_i]=N^-[\dirg{G},v_i]\cup N^+(\dirg{G},v_i)
\subseteq N^-(\dirg{G},v_{i+1})\cup N^+[\dirg{G},v_{i+1}]=N[G,v_{i+1}].
\end{equation*}
Hence the link $\lk_{\cl(G)}(v_i)$ is a cone with apex $v_{i+1}$, or in other words, $\cl(G)$ is obtained from $\cl(G\setminus v_i)$ by attaching a cone over a cone. It follows that the inclusion $\cl(G\setminus v_i)\incl\cl(G)$ is a homotopy equivalence. Since the composition $\cl(G\setminus v_i)\incl \cl(G)\to \cl(G\setminus v_i)$ is the identity, also $\dirg{G}\to\dirg{G}\setminus v_i$ induces a homotopy equivalence.
\end{proof}

\begin{corollary}
\label{cor:dismantlinghtpy}
If a cyclic graph $\dirg{G}$ dismantles to $\dirg{H}$ then the maps of clique complexes induced by $\dirg{H}\incl\dirg{G}$ and $\dirg{G}\to\dirg{H}$ are homotopy equivalences.
\end{corollary}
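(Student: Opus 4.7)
The plan is to induct on the length $s$ of the dismantling sequence $\dirg{G}=\dirg{G}_0,\dirg{G}_1,\ldots,\dirg{G}_s=\dirg{H}$ furnished by Definition~\ref{def:dismantling}, where each $\dirg{G}_i$ is obtained from $\dirg{G}_{i-1}$ by removing a vertex dominated in $\dirg{G}_{i-1}$. The base case $s=0$ is trivial since $\dirg{G}=\dirg{H}$ and both maps are the identity.

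For the inductive step, I would apply Lemma~\ref{lem:removedominatedclique} to the single removal $\dirg{G}_{i-1}\to\dirg{G}_{i-1}\setminus v=\dirg{G}_i$: this tells me that both the inclusion $\dirg{G}_i\incl\dirg{G}_{i-1}$ and the cyclic homomorphism $\dirg{G}_{i-1}\to\dirg{G}_i$ produced by Lemma~\ref{lem:removedominated} induce homotopy equivalences on clique complexes. Composing these over $i=1,\ldots,s$ yields a cyclic homomorphism $\dirg{G}\to\dirg{H}$ (composition being legal by Lemma~\ref{lem:compose}(b)) that factors through a chain of homotopy equivalences, and symmetrically an inclusion $\dirg{H}\incl\dirg{G}$ that is also a composition of homotopy equivalences of the clique complexes. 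Both induced maps on $\cl(-)$ are therefore homotopy equivalences.

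The only thing to be slightly careful about is matching the maps in the statement with those produced by the induction. The map $\dirg{G}\to\dirg{H}$ in the statement is, by convention of Definition~\ref{def:dismantling} and the remark following it, the composition of the single-step retractions from Lemma~\ref{lem:removedominated}, so the composed homotopy equivalence coming from the inductive step really does model this map on clique complexes. Likewise the subgraph inclusion $\dirg{H}\incl\dirg{G}$ factors as $\dirg{H}=\dirg{G}_s\incl\dirg{G}_{s-1}\incl\cdots\incl\dirg{G}_0=\dirg{G}$, and each factor induces a homotopy equivalence by Lemma~\ref{lem:removedominatedclique}. Since the composition $\cl(H)\incl\cl(G)\to\cl(H)$ is the identity (as each single-step retract-inclusion composition is the identity by Lemma~\ref{lem:removedominated}), no extra bookkeeping is needed.

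There is no real obstacle: the whole content sits in Lemma~\ref{lem:removedominatedclique}, and the corollary is a direct iteration. The only subtlety worth mentioning explicitly is that after removing $v_i$ from $\dirg{G}_{i-1}$ the graph $\dirg{G}_i$ is again cyclic (Lemma~\ref{lem:cycliceasy}(c)), so Lemma~\ref{lem:removedominatedclique} is legitimately applicable at every step of the induction.
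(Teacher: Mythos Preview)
Your proposal is correct and is exactly the intended argument: the paper states this corollary without proof, treating it as an immediate iteration of Lemma~\ref{lem:removedominatedclique} along the dismantling sequence. You have simply spelled out that iteration carefully, including the check that each $\dirg{G}_i$ remains cyclic so the lemma applies at every step.
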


To determine the homotopy types of $\cl(G)$ for arbitrary cyclic graphs $\dirg{G}$ we recall the following result, proved with different methods in \cite{Adamaszek2013} and \cite{AAFPP-J}.

\begin{theorem}
\label{thm:cnk}
For $0\leq k<\frac12 n$ there are homotopy equivalences
\begin{equation*}
\cl(\cnk{n}{k})\htpyequiv\begin{cases}
S^{2l+1} & \mathrm{if}\ \frac{l}{2l+1}<\frac{k}{n}<\frac{l+1}{2l+3}\ \mathrm{for\ some}\ l=0,1,\ldots,\\
\bigvee^{n-2k-1}S^{2l} & \mathrm{if}\ \frac{k}{n}=\frac{l}{2l+1}\ \mathrm{for\ some}\ l=0,1,\ldots.
\end{cases}
\end{equation*}
\end{theorem}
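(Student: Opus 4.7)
The plan is to compute the homotopy type of $\cl(\cnk{n}{k})$ directly by building a discrete Morse function on its face poset. By Proposition~\ref{prop:cnkisfinal} the graph $\cnk{n}{k}$ has no dominated vertex, so the reductions developed in Section~\ref{sect:cyclic} cannot simplify this computation further: this ``core'' case has to be handled genuinely. The key features to exploit are the $\ZZ/n$-symmetry of $\cnk{n}{k}$ together with the simple description of its faces: a set $\sigma \subset \ZZ/n$ is a face of $\cl(\cnk{n}{k})$ iff it is contained in some arc of $k+1$ consecutive vertices. Equivalently, the maximal faces are the $n$ rotations of $\{0,1,\ldots,k\}$.

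First I would design an acyclic matching on the face poset. For each nonempty face $\sigma$, read off its cyclic gap pattern on $\ZZ/n$; use this pattern to single out a canonical vertex $v(\sigma)$ (for instance, the start of a largest gap, tie-broken lexicographically under a fixed rotation), and attempt to pair $\sigma$ with $\sigma \bigtriangleup \{v(\sigma)\}$ whenever this symmetric difference is still a face. A case analysis in terms of the gap sizes should verify that this rule induces a well-defined matching and that no cyclic chains of matched pairs occur. The unmatched faces should be exactly $\emptyset$ together with those ``maximally evenly spaced'' subsets of $\ZZ/n$ that cannot be extended or contracted without violating the anchoring rule. I expect the counts to come out as follows: when $\frac{l}{2l+1}<\frac{k}{n}<\frac{l+1}{2l+3}$ there is exactly one critical cell in each of dimensions $0$ and $2l+1$, while when $\frac{k}{n}=\frac{l}{2l+1}$ there is one critical $0$-cell and exactly $n-2k-1$ critical cells of dimension $2l$, corresponding to the $n-2k-1$ distinct cyclic configurations of $2l+1$ vertices with gaps all equal to $2$ except one gap of size $3$.

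The hardest step is verifying that the resulting CW complex has the correct \emph{homotopy} type, not merely the correct Euler characteristic and homology. In the sphere case this is automatic, since a CW complex with exactly one $0$-cell and one $(2l+1)$-cell is necessarily homotopy equivalent to $S^{2l+1}$: the attaching map $S^{2l}\to\{*\}$ is null. The real work is the boundary case, where one must show that each of the $n-2k-1$ critical $2l$-cells is attached trivially to the $0$-skeleton of the Morse complex, so that the result is genuinely a wedge $\bigvee^{n-2k-1} S^{2l}$ rather than some space with the same homology. I would handle this by using the cyclic symmetry to reduce the verification to a single critical cell, then exhibiting an explicit null-homotopy inside $\cl(\cnk{n}{k})$ for the boundary of one maximally-spaced face; alternatively one could proceed by induction, peeling off one critical $2l$-cell at a time and checking that each step wedges on a single sphere. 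An auxiliary connectivity fact, namely that $\cl(\cnk{n}{k})$ is simply connected for $k\geq 2$ (which follows from the existence of $2$-simplices covering every edge path), lets one invoke Hurewicz and Whitehead to promote the Morse-theoretic cell structure to a genuine homotopy equivalence.
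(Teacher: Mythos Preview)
The paper does not prove Theorem~\ref{thm:cnk}; it is quoted as a known result from \cite{Adamaszek2013} and \cite{AAFPP-J} and used as a black box. So there is no ``paper's proof'' to compare against, and your proposal is really an independent attempt at those external references.

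As written, your outline is a plan rather than a proof, and two of its concrete claims are incorrect. First, your description of the critical $2l$-cells in the singular case (``$2l+1$ vertices with all gaps equal to $2$ except one gap of size $3$'') cannot be right: those gaps sum to $4l+3$, while in the singular case $n=d(2l+1)$ for some integer $d\ge 1$, and $4l+3$ is never of that form. The number of critical cells you are aiming for is $n-2k-1=d-1$, and the natural candidates are related to the $d$ evenly spaced $(2l+1)$-gons with all gaps equal to $d$, not to gap patterns $2,\ldots,2,3$. Second, the assertion that $\cl(C_n^k)$ is simply connected for $k\ge 2$ is false: for instance $\cl(C_7^2)\simeq S^1$. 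Finally, note that if a discrete Morse matching with the advertised critical-cell counts does exist, the homotopy type in the wedge case follows automatically: a CW complex with one $0$-cell and all remaining cells in a single dimension $2l\ge 2$ has every attaching map landing in a point, so it is already $\bigvee S^{2l}$ with no need for a separate Hurewicz--Whitehead step. The substantive missing piece is therefore an explicit, verified acyclic matching together with a correct identification of its critical cells.
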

By convention an empty wedge sum is a point. We immediately obtain the following result.
\begin{theorem}
\label{thm:cyclichtpy}
If $\dirg{G}$ is a cyclic graph then
\begin{equation*}
\cl(G)\htpyequiv\begin{cases}
S^{2l+1} & \mathrm{if}\ \frac{l}{2l+1}<\wf(\dirg{G})<\frac{l+1}{2l+3}\ \mathrm{for\ some}\ l=0,1,\ldots,\\
\bigvee^{n-2k-1}S^{2l} & \mathrm{if}\ \wf(\dirg{G})=\frac{l}{2l+1}\ \mathrm{and}\ \dirg{G}\ \mathrm{dismantles\ to}\ \cnkdir{n}{k}.
\end{cases}
\end{equation*}
\end{theorem}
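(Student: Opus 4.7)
The plan is to reduce the statement to Theorem~\ref{thm:cnk} by passing through a dismantling of $\dirg{G}$ to a graph of the form $\cnkdir{n}{k}$. All the hard work has already been packaged into earlier results, so this should amount to stitching three previous propositions together.

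First, I would invoke Proposition~\ref{prop:cnkisfinal} to obtain integers $n$ and $k$ with $0\leq k<\tfrac12 n$ such that $\dirg{G}$ dismantles to $\cnkdir{n}{k}$. Then Proposition~\ref{prop:wfofcore} identifies $\wf(\dirg{G})=\tfrac{k}{n}$, so the trichotomy on $\wf(\dirg{G})$ in the statement is literally the trichotomy on $\tfrac{k}{n}$ appearing in Theorem~\ref{thm:cnk}.

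Next I would apply Corollary~\ref{cor:dismantlinghtpy} to conclude that the inclusion $\cnkdir{n}{k}\incl\dirg{G}$ induces a homotopy equivalence $\cl(\cnk{n}{k})\htpyequiv\cl(G)$ on clique complexes. Combining this with Theorem~\ref{thm:cnk} gives the two desired homotopy types: if $\tfrac{l}{2l+1}<\tfrac{k}{n}<\tfrac{l+1}{2l+3}$ then $\cl(G)\htpyequiv\cl(\cnk{n}{k})\htpyequiv S^{2l+1}$, and if $\tfrac{k}{n}=\tfrac{l}{2l+1}$ then $\cl(G)\htpyequiv\bigvee^{n-2k-1}S^{2l}$, which matches the statement exactly since the wedge count is written in terms of the specific $n,k$ of the dismantling.

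There is really no obstacle here: the only thing to notice is that in the second (singular) case the integer $n-2k-1$ a priori depends on the choice of dismantling, but the statement accommodates this by phrasing the conclusion in terms of whatever $\cnkdir{n}{k}$ one dismantles to. (The stronger uniqueness asserted in Remark~\ref{rem:uniquecore} is not required.) Thus the proof is essentially one line: dismantle, invoke Corollary~\ref{cor:dismantlinghtpy}, and quote Theorem~\ref{thm:cnk}.
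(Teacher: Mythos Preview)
Your proposal is correct and follows essentially the same approach as the paper: dismantle $\dirg{G}$ to some $\cnkdir{n}{k}$ via Proposition~\ref{prop:cnkisfinal}, use Corollary~\ref{cor:dismantlinghtpy} to get $\cl(G)\htpyequiv\cl(\cnk{n}{k})$, identify $\wf(\dirg{G})=\tfrac{k}{n}$ via Proposition~\ref{prop:wfofcore}, and then read off the homotopy type from Theorem~\ref{thm:cnk}. Your additional remark about the wedge count depending on the particular dismantling (and that Remark~\ref{rem:uniquecore} is not needed) is a nice clarification that the paper leaves implicit.
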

\begin{proof}
Graph $\dirg{G}$ dismantles to some $\cnkdir{n}{k}$ for $0\leq k<\frac12 n$ by Proposition~\ref{prop:cnkisfinal}, and then we have $\cl(G)\htpyequiv\cl(\cnk{n}{k})$ by Corollary~\ref{cor:dismantlinghtpy}. From  Proposition~\ref{prop:wfofcore} we get $\wf(\dirg{G})=\frac{k}{n}$, and plugging this into Theorem~\ref{thm:cnk} gives the result.
\end{proof}

\begin{corollary}
\label{cor:vrchtpy}
If $X\subseteq S^1$ is a finite set and $0\leq r<\frac12$ then
\begin{equation*}
\vrc{X}{r}\htpyequiv\begin{cases}
S^{2l+1} & \mathrm{if}\ \frac{l}{2l+1}<\wfc{X}{r}<\frac{l+1}{2l+3}\ \mathrm{for\ some}\ l=0,1,\ldots,\\
\bigvee^{n-2k-1}S^{2l} & \mathrm{if}\ \wfc{X}{r}=\frac{l}{2l+1}\ \mathrm{and}\ \vrdir{X}{r}\ \mathrm{dismantles\ to}\ \cnkdir{n}{k}.
\end{cases}
\end{equation*}
\end{corollary}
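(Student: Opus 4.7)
The plan is to observe that this corollary is an almost immediate specialization of Theorem~\ref{thm:cyclichtpy} to the directed Vietoris--Rips graph $\vrdir{X}{r}$. The three ingredients required are already in place in the excerpt: (i) $\vrdir{X}{r}$ is a cyclic graph (with cyclic order inherited from the clockwise order of $X$ on $S^1$, as noted right after Definition~\ref{def:vrdigraphs}); (ii) the Vietoris--Rips complex is the clique complex of its underlying undirected graph, $\vrc{X}{r}=\cl(\vr{X}{r})$; and (iii) by definition $\wfc{X}{r}=\wf(\vrdir{X}{r})$.

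First I would invoke Proposition~\ref{prop:cnkisfinal} to dismantle the cyclic graph $\vrdir{X}{r}$ to some $\cnkdir{n}{k}$ with $0\leq k<\tfrac12 n$. By Proposition~\ref{prop:wfofcore} this forces $\wfc{X}{r}=\tfrac{k}{n}$. Then Theorem~\ref{thm:cyclichtpy}, applied with $\dirg{G}=\vrdir{X}{r}$, yields the desired homotopy type of $\cl(\vr{X}{r})=\vrc{X}{r}$ in both cases: an odd sphere $S^{2l+1}$ when $\wfc{X}{r}$ lies strictly between consecutive fractions $\tfrac{l}{2l+1}$ and $\tfrac{l+1}{2l+3}$, and a wedge of even spheres $S^{2l}$ with the specified number of summands when $\wfc{X}{r}=\tfrac{l}{2l+1}$.

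There is no real obstacle here; the only things worth double-checking are the boundary behavior at $r=0$ and consistency of Proposition~\ref{prop:wfbasic}(c) with the ranges of $\wfc{X}{r}$ that can occur. For $r=0$ the directed graph has no edges, hence no dominated vertices, so it is itself of the form $\cnkdir{n}{0}$ with $n=|X|$, $\wfc{X}{0}=0=\tfrac{0}{1}$, and the second case of the corollary correctly gives a wedge of $n-1$ copies of $S^0$, matching the discrete space $X$. For $0<r<\tfrac12$, Proposition~\ref{prop:wfbasic}(c) gives $\wfc{X}{r}\leq r<\tfrac12$, so the ranges in the theorem cover all possible values, and no further case analysis is needed.
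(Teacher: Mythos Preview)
Your proposal is correct and follows the same approach as the paper, which simply observes that $\vrdir{X}{r}$ is a cyclic graph with $\vrc{X}{r}=\cl(\vr{X}{r})$ and then invokes Theorem~\ref{thm:cyclichtpy} directly. Your explicit appeal to Propositions~\ref{prop:cnkisfinal} and~\ref{prop:wfofcore} is redundant, since those are already absorbed into the proof of Theorem~\ref{thm:cyclichtpy}, and the boundary discussion at $r=0$ is a harmless sanity check.
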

\begin{proof}
For the cyclic graph $\vrdir{X}{r}$ we have $\vrc{X}{r}=\cl(\vr{X}{r})$.
\end{proof}

\begin{remark}
A \emph{circular-arc graph} (CA) is an intersection graph of a collection of arcs in $S^1$. A circular-arc graph is \emph{proper} (PCA) if no arc contains another and \emph{unit} (UCA) if all arcs have the same length. We have inclusions of graph classes $\mathrm{UCA}\subsetneq \mathrm{PCA} \subsetneq \mathrm{CA}$. If $\dirg{G}$ is a cyclic graph then one can show $G$ is a PCA graph, and if $X\subset S^1$ is finite and $0\leq r<\frac12$ then the Vietoris--Rips graph $\vr{X}{r}$ is a UCA graph. In \cite{AAFPP-J} we proved that the clique complex of any CA graph has the homotopy type of $S^{2l+1}$ or a wedge of copies of $S^{2l}$ for some $l\geq 0$. The theory of winding fractions refines the result of \cite{AAFPP-J} by providing quantitative control over which homotopy type occurs, and by allowing us to understand induced maps. These features will be crucial for the applications we present in the following sections.
\end{remark}

There is a clear difference in the behaviour of $\cl(G)$ when $\wf(\dirg{G})$ is one of the \emph{singular} values $\frac{l}{2l+1}$, $l=0,1,\ldots$ as opposed to a \emph{generic} value $\frac{l}{2l+1}<\wf(\dirg{G})<\frac{l+1}{2l+3}$. We now discuss additional properties of $\cl(G)$ in the generic situation. The next lemmas describe the effect of a vertex or edge removal on the homotopy type of $\cl(G)$.

\begin{lemma}
\label{lem:removeone}
Suppose that $\dirg{G}$ is a cyclic graph and $v\in V(\dirg{G})$. If
$\frac{l}{2l+1}<\wf(\dirg{G}\setminus v)\leq\wf(\dirg{G})<\frac{l+1}{2l+3}$,
then the inclusion $\dirg{G}\setminus v\incl\dirg{G}$
induces a homotopy equivalence of clique complexes.
\end{lemma}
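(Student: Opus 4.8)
The plan is to compare the clique complexes of $\dirg{G}\setminus v$ and $\dirg{G}$ directly, using that both are determined by their winding fractions together with Theorem~\ref{thm:cyclichtpy}. First I would reduce to the situation where $\dirg{G}$ has no dominated vertex other than possibly $v$: using Lemma~\ref{lem:removedominatedclique} and Corollary~\ref{cor:dismantlinghtpy} repeatedly on dominated vertices of $\dirg{G}\setminus v$ (and noting that such a vertex remains dominated, or becomes dominated, inside $\dirg{G}$ away from $v$ — this compatibility needs checking) one can assume $\dirg{G}\setminus v$ is one of the graphs $\cnkdir{n-1}{k}$ given by Proposition~\ref{prop:cnkisfinal}. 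Alternatively, and more robustly, I would dismantle $\dirg{G}$ itself to some $\cnkdir{N}{K}$ and dismantle $\dirg{G}\setminus v$ to some $\cnkdir{N'}{K'}$; by Proposition~\ref{prop:wfofcore} we have $\frac{K'}{N'}=\wf(\dirg{G}\setminus v)$ and $\frac{K}{N}=\wf(\dirg{G})$, and the hypothesis places both of these strictly between $\frac{l}{2l+1}$ and $\frac{l+1}{2l+3}$. By Theorem~\ref{thm:cnk} (the generic case), both $\cl(\cnk{N}{K})$ and $\cl(\cnk{N'}{K'})$ are homotopy equivalent to $S^{2l+1}$, hence $\cl(G)\htpyequiv\cl(G\setminus v)\htpyequiv S^{2l+1}$.

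The point that requires real work is upgrading this ``abstract'' homotopy equivalence to the statement that the specific inclusion-induced map $\cl(G\setminus v)\incl\cl(G)$ is a homotopy equivalence, not merely that source and target share a homotopy type. For this I would argue via homology and the long exact sequence of the pair $(\cl(G),\cl(G\setminus v))$: since both spaces are homotopy equivalent to $S^{2l+1}$, it suffices to show the inclusion induces an isomorphism on $\redhom_*$, and since a degree-$\pm1$ self-map of $S^{2l+1}$ between such spaces is a homotopy equivalence (both spaces are simply connected for $l\geq1$, and the $l=0$ case $\wf<\frac13$ can be handled separately), it is enough to exhibit one nonzero class that survives. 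Concretely, I would track a generating cycle: take a cyclic homomorphism $\cnkdir{n}{k}\to\dirg{G}\setminus v$ realizing the winding fraction on the $\dirg{G}\setminus v$ side and a cyclic homomorphism $\cnkdir{n}{k}\to\dirg{G}$ realizing it on the $\dirg{G}$ side; functoriality of $\dirg{G}\mapsto\cl(G)$ together with the explicit description of the generator of $\redhom_{2l+1}(\cl(\cnk{n}{k}))$ from \cite{AAFPP-J} shows the image of this generator is nonzero in $\cl(G)$, and factors (after possibly enlarging $n$ via the maps $\iota,\tau$ of Proposition~\ref{prop:cychomoall}) through $\cl(G\setminus v)$. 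Hence the inclusion is surjective on $\redhom_{2l+1}$, and being a map between spaces each homotopy equivalent to $S^{2l+1}$ it is then an isomorphism on all reduced homology, hence a homotopy equivalence by Whitehead's theorem.

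The main obstacle, as indicated, is precisely the compatibility between the cyclic homomorphism realizing $\wf(\dirg{G}\setminus v)$ and the inclusion $\dirg{G}\setminus v\incl\dirg{G}$: one needs the generating sphere class to be ``the same'' on both sides. The cleanest way around this is to observe that the winding fraction of $\dirg{G}\setminus v$ is witnessed by a dismantling $\dirg{G}\setminus v\to\cnkdir{n'}{k'}$ which, composed with $\cnkdir{n'}{k'}\incl\dirg{G}\setminus v\incl\dirg{G}$, gives $\cnkdir{n'}{k'}\incl\dirg{G}$ with $\frac{k'}{n'}$ in the same open interval; then both complexes receive, from a common $\cnkdir{n}{k}$ (with $\frac{k}{n}$ also in the interval and $n$ chosen large enough by Proposition~\ref{prop:cychomoall}), maps that induce isomorphisms on $\redhom_{2l+1}$ by Theorem~\ref{thm:cnk} applied to the source and target. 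Chasing the resulting commuting triangle of clique complexes forces $\cl(G\setminus v)\incl\cl(G)$ to be a $\redhom_{2l+1}$-isomorphism, and Whitehead finishes the argument. I would expect the write-up to isolate, as a small lemma, the statement that for $\frac{l}{2l+1}<\frac{k}{n},\frac{k'}{n'}<\frac{l+1}{2l+3}$ any cyclic homomorphism $\cnkdir{n}{k}\to\cnkdir{n'}{k'}$ induces a homotopy equivalence of clique complexes — this is the technical heart and follows by combining the explicit generators of \cite{AAFPP-J} with Theorem~\ref{thm:cnk}.
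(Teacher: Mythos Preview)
Your reduction to showing that $\cl(G\setminus v)\hookrightarrow\cl(G)$ is a homology isomorphism between copies of $S^{2l+1}$ is the right shape, but the argument you propose to establish this isomorphism has a circularity problem and a couple of logical slips.

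First, the slip: showing that the image of a generator of $\redhom_{2l+1}(\cl(\cnk{n}{k}))$ is \emph{nonzero} in $\redhom_{2l+1}(\cl(G))\cong\ZZ$ does not make the inclusion $\cl(G\setminus v)\hookrightarrow\cl(G)$ \emph{surjective} on $\redhom_{2l+1}$; the image could be $2\ZZ$. You need the image to be a generator, not just nonzero. Second, and more seriously, when you write that the maps from a common $\cnkdir{n}{k}$ into $\cl(G\setminus v)$ and $\cl(G)$ induce isomorphisms on $\redhom_{2l+1}$ ``by Theorem~\ref{thm:cnk} applied to the source and target'', this is not what that theorem says: Theorem~\ref{thm:cnk} computes homotopy types, it says nothing about induced maps. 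The statement you actually need---that any cyclic homomorphism $\cnkdir{n}{k}\to\cnkdir{n'}{k'}$ with both ratios in $(\frac{l}{2l+1},\frac{l+1}{2l+3})$ induces a homotopy equivalence---is precisely (a special case of) Proposition~\ref{prop:inducedhtpyequiv}, which in the paper is proved \emph{using} Lemma~\ref{lem:removeone} as its first step (factoring an injective cyclic homomorphism into single-vertex and single-edge extensions). So your proposed route is circular within the paper's logical structure. Your fallback to ``explicit generators from \cite{AAFPP-J}'' might in principle be made to work, but you have not indicated how, and it would amount to an independent and substantial computation.

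The paper avoids all of this with a local argument: write $\cl(G)=\cl(G\setminus v)\cup_{\lk(v)}\bigl(\lk(v)*v\bigr)$ and apply Mayer--Vietoris. The key observation, which you are missing, is that $\lk_{\cl(G)}(v)=\cl(G_v)$ where $\dirg{G}_v$ is the cyclic subgraph induced on $N(G,v)$; by Theorem~\ref{thm:cyclichtpy} its reduced homology is free and concentrated in a single degree. Feeding this into the exact sequence
\[
0\to\redhom_{2l+1}(\cl(G_v))\to\ZZ\to\ZZ\to\redhom_{2l}(\cl(G_v))\to 0
\]
forces $\redhom_*(\cl(G_v))=0$ and the middle map to be an isomorphism. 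Hurewicz and Whitehead finish. This argument is self-contained and does not presuppose anything about maps between different $\cnkdir{n}{k}$'s.
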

\begin{proof}
By Theorem~\ref{thm:cyclichtpy} the complexes $\cl(G\setminus v)$ and $\cl(G)$ are both homotopy equivalent to $S^{2l+1}$. Let $\dirg{G}_v$ denote the cyclic subgraph of $\dirg{G}$ induced by $N(G,v)$, so that  $\lk_{\cl(G)}(v)=\cl(G_v)$. The decomposition $\cl(G)=\cl(G\setminus v)\bigcup_{\cl(G_v)} (\cl(G_v)\ast v)$ yields a Mayer--Vietoris long exact sequence of homology groups whose only nontrivial part is
\begin{equation}
\label{eq:lesvertex}
\xymatrix@R0pt@C12pt{
0\ar[r] & \redhom_{2l+1}(\cl(G_v))\ar[r] & \redhom_{2l+1}(\cl(G\setminus v))\ar[r] & \redhom_{2l+1}(\cl(G))\ar[r] & \redhom_{2l}(\cl(G_v))\ar[r] & 0 \\
 & & \| & \| & & \\
 & & \ZZ & \ZZ & &
}.
\end{equation}
Since $\dirg{G}_v$ is cyclic, by Theorem~\ref{thm:cyclichtpy} the homology $\redhom_*(\cl(G_v))$ is free and concentrated in at most one dimension. In view of \eqref{eq:lesvertex} this is possible only if $\redhom_*(\cl(G_v))=0$ and the middle map in \eqref{eq:lesvertex} is an isomorphism. So $\dirg{G}\setminus v\incl\dirg{G}$ induces a homology isomorphism between spaces homotopy equivalent to $S^{2l+1}$, and hence is a homotopy equivalence by the Hurewicz and Whitehead theorems.
\end{proof}

\begin{lemma}
\label{lem:removeoneedge}
Suppose that $\dirg{G}$ is a cyclic graph and $e\in E(\dirg{G})$ is an edge such that $\dirg{G}\setminus e$ is also a cyclic graph. If
$\frac{l}{2l+1}<\wf(\dirg{G}\setminus e)\leq\wf(\dirg{G})<\frac{l+1}{2l+3}$,
then the inclusion $\dirg{G}\setminus e\incl\dirg{G}$
induces a homotopy equivalence of clique complexes.
\end{lemma}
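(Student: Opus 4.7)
The plan is to imitate the proof of Lemma~\ref{lem:removeone} with the vertex replaced by the edge $e = v \diredge w$. First I set up the decomposition: let $K = \cl(G)$ and $K' = \cl(G \setminus e)$. The faces of $K$ that are not faces of $K'$ are exactly those containing both $v$ and $w$, so $K = K' \cup A$ where $A$ is the closed star of the edge $\{v,w\}$ in $K$. With $L = \lk_K(\{v,w\}) = \cl\!\left(G\bigl[N(G,v) \cap N(G,w)\bigr]\right)$ we have $A = \{v,w\} \ast L$, the join of $L$ with the $1$-simplex $\{v,w\}$, which is contractible; and $K' \cap A = \partial\{v,w\} \ast L = S^0 \ast L \iso \Sigma L$.

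Next I identify the homotopy types involved. By Lemma~\ref{lem:cycliceasy}(c), the induced subdigraph of $\dirg{G}$ on $N(G,v) \cap N(G,w)$ is cyclic, so Theorem~\ref{thm:cyclichtpy} shows that $L$ is either contractible, homotopy equivalent to a single odd sphere, or homotopy equivalent to a wedge of equidimensional even spheres. In particular $\tilde H_*(L)$, and hence $\tilde H_*(\Sigma L)$, is free and concentrated in at most one degree. The winding-fraction hypothesis together with Theorem~\ref{thm:cyclichtpy} also gives $K \htpyequiv K' \htpyequiv S^{2l+1}$.

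Then I run the Mayer--Vietoris sequence for $K = K' \cup A$, using $A \htpyequiv \ast$ and $\tilde H_n(K' \cap A) = \tilde H_n(\Sigma L) = \tilde H_{n-1}(L)$. In every degree $n \notin \{2l, 2l+1\}$ the sequence immediately forces $\tilde H_n(\Sigma L) = 0$, so the only potentially nontrivial block is
\begin{equation*}
0 \to \tilde H_{2l+1}(\Sigma L) \to \tilde H_{2l+1}(K') \to \tilde H_{2l+1}(K) \to \tilde H_{2l}(\Sigma L) \to 0.
\end{equation*}
Exactness forces $\mathrm{rk}\,\tilde H_{2l+1}(\Sigma L) = \mathrm{rk}\,\tilde H_{2l}(\Sigma L)$, while concentration of $\tilde H_*(\Sigma L)$ in a single degree forces at least one of these two ranks to vanish; hence both vanish, $\tilde H_*(L) = 0$, and the middle map $H_{2l+1}(K') \to H_{2l+1}(K)$ is an isomorphism induced by the inclusion $K' \incl K$. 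Exactly as in Lemma~\ref{lem:removeone}, Hurewicz and Whitehead (with the degree classification of self-maps of $S^1$ handling the case $l = 0$) then upgrade this to a homotopy equivalence.

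The main additional bookkeeping obstacle compared to Lemma~\ref{lem:removeone} is the suspension shift: the two borderline candidates $L \htpyequiv S^{2l-1}$ and $L \htpyequiv \bigvee S^{2l}$ both place nontrivial homology of $\Sigma L$ inside the \emph{same} displayed MV block, so rather than being excluded in separate degrees they must be ruled out simultaneously by combining the rank identity with the one-degree-concentration of $\tilde H_*(L)$.
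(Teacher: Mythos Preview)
Your proof is correct and follows essentially the same route as the paper: the same decomposition $\cl(G)=\cl(G\setminus e)\cup(\{v,w\}\ast L)$ glued along $\Sigma L$, the same Mayer--Vietoris block, and the same conclusion via Hurewicz and Whitehead. You spell out the rank-equality step and the $l=0$ case a bit more explicitly than the paper (which simply refers back to Lemma~\ref{lem:removeone}), but there is no substantive difference in approach.
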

\begin{proof}
Let $e=(a,b)$ and denote by $\dirg{G}_e$ the cyclic subgraph of $\dirg{G}$ induced by $N(G,a)\cap N(G,b)$. Then we have a decomposition $$\cl(G)=\cl(G\setminus e)\bigcup_{\cl(G_e)\ast\{a,b\}} (\cl(G_e)\ast e)=\cl(G\setminus e)\bigcup_{\Sigma\ \cl(G_e)} (\cl(G_e)\ast e).$$
By Mayer--Vietoris this yields the exact sequence
\begin{equation*}
\xymatrix@R0pt@C12pt{
0\ar[r] & \redhom_{2l}(\cl(G_e))\ar[r] & \redhom_{2l+1}(\cl(G\setminus e))\ar[r] & \redhom_{2l+1}(\cl(G))\ar[r] & \redhom_{2l-1}(\cl(G_e))\ar[r] & 0 \\
 & & \| & \| & & \\
 & & \ZZ & \ZZ & &
},
\end{equation*}
where $\redhom_{k}(\cl(G_e))=\redhom_{k+1}(\Sigma\ \cl(G_e))$. The proof can now be completed as in Lemma~\ref{lem:removeone}.
\end{proof}

\begin{proposition}
\label{prop:inducedhtpyequiv}
Suppose $f\colon\dirg{G}\to\dirg{H}$ is a cyclic homomorphism and $\frac{l}{2l+1}<\wf(\dirg{G})\leq \wf(\dirg{H})<\frac{l+1}{2l+3}$. Then $f$ induces a homotopy equivalence of clique complexes.
\end{proposition}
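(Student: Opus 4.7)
The plan is to reduce to the case of maps between the canonical graphs $\cnkdir{n}{k}$ and then to realize $f$ by a zig-zag of inclusions, dismantlings, and single vertex/edge removals, each of which is a homotopy equivalence by results already established. First, by Propositions~\ref{prop:cnkisfinal} and~\ref{prop:wfofcore} the graphs $\dirg{G}$ and $\dirg{H}$ dismantle to $\cnkdir{n}{k}$ and $\cnkdir{n'}{k'}$ with $\frac{k}{n}=\wf(\dirg{G})$ and $\frac{k'}{n'}=\wf(\dirg{H})$. By Corollary~\ref{cor:dismantlinghtpy} the associated inclusions and retractions induce homotopy equivalences on clique complexes, so it suffices to prove the statement for the cyclic homomorphism $g\colon\cnkdir{n}{k}\to\cnkdir{n'}{k'}$ obtained by pre- and post-composing $f$ with these maps; its winding fractions still lie in $(\tfrac{l}{2l+1},\tfrac{l+1}{2l+3})$.

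Next I build an amalgam that incarnates $g$. Let $\dirg{K}$ be a graph on the disjoint union $V(\cnkdir{n}{k})\sqcup V(\cnkdir{n'}{k'})$ with cyclic order obtained by inserting each $v\in V(\cnkdir{n}{k})$ immediately before $g(v)\in V(\cnkdir{n'}{k'})$, breaking ties inside a fibre $g^{-1}(w)$ by the $\cnkdir{n}{k}$-order (fibres are subintervals by cyclic monotonicity). The edges of $\dirg{K}$ are the original $\cnkdir{n'}{k'}$-edges on $V(\cnkdir{n'}{k'})$; the pullback edges $\dirg{G}':=g^{*}\cnkdir{n'}{k'}$ on $V(\cnkdir{n}{k})$, i.e.\ $v\diredge v'$ iff $g(v)\diredge g(v')$ or $g(v)=g(v')$ with $v$ before $v'$ in the fibre (so $\cnkdir{n}{k}$ is a spanning subgraph of $\dirg{G}'$); and the cross-edges $v\diredge w$ whenever $g(v)=w$ or $g(v)\diredge w$ in $\cnkdir{n'}{k'}$, together with $w\diredge v$ whenever $w\diredge g(v)$ in $\cnkdir{n'}{k'}$ and $w\neq g(v)$. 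A case analysis using the cyclic monotonicity of $g$ shows that $\dirg{K}$ is cyclic and that each $v\in V(\cnkdir{n}{k})$ is dominated in $\dirg{K}$ by the next vertex of its $g$-fibre (or by $g(v)$ itself if $v$ is the last in its fibre). Iterated dismantling of these vertices produces a cyclic homomorphism $\pi\colon\dirg{K}\to\cnkdir{n'}{k'}$ that restricts to $g$ on $V(\cnkdir{n}{k})$ and to the identity on $V(\cnkdir{n'}{k'})$; Corollary~\ref{cor:dismantlinghtpy} gives $\cl(\pi)\colon\cl(\dirg{K})\xrightarrow{\htpyequiv}\cl(\cnk{n'}{k'})$, and in particular $\wf(\dirg{K})=\frac{k'}{n'}$.

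Finally I pass from $\dirg{K}$ down through $\dirg{G}'$ to $\cnkdir{n}{k}$. Delete the vertices of $V(\cnkdir{n'}{k'})$ from $\dirg{K}$ one at a time. By Proposition~\ref{prop:wfbasic}(b) the winding fraction is non-increasing along this sequence, and since $\wf(\dirg{G}')\geq \wf(\cnkdir{n}{k})=\frac{k}{n}>\frac{l}{2l+1}$ by the subgraph inclusion and $\wf(\dirg{K})=\frac{k'}{n'}<\frac{l+1}{2l+3}$, the winding fraction stays strictly inside $(\frac{l}{2l+1},\frac{l+1}{2l+3})$ at every step. So Lemma~\ref{lem:removeone} applies throughout, yielding $\cl(\dirg{G}')\htpyequiv\cl(\dirg{K})$. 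Then remove the extra edges of $\dirg{G}'\supseteq\cnkdir{n}{k}$ one at a time, always taking the last forward-edge of some vertex so that cyclicity is preserved; applying Lemma~\ref{lem:removeoneedge} with the same winding-fraction bracket gives $\cl(\cnk{n}{k})\htpyequiv\cl(\dirg{G}')$. The composite
\[
\cl(\cnk{n}{k})\xrightarrow{\htpyequiv}\cl(\dirg{G}')\xrightarrow{\htpyequiv}\cl(\dirg{K})\xrightarrow{\cl(\pi)}\cl(\cnk{n'}{k'})
\]
equals $\cl(g)$, which is therefore a homotopy equivalence, and combining with the reduction step finishes the argument.

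The main obstacle is the verification in the amalgamation step that $\dirg{K}$ is cyclic and that the claimed vertices are dominated: the interleaved cyclic order makes the contiguous-neighborhood condition delicate, and cyclic monotonicity of $g$ is used essentially here. A secondary subtlety is that arbitrary edge deletions from a cyclic graph need not produce a cyclic graph, so the order of edge removals when collapsing $\dirg{G}'$ to $\cnkdir{n}{k}$ must be chosen compatibly with the forward-interval structure of out-neighborhoods.
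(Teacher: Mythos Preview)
Your approach is correct and takes a genuinely different route from the paper's. Both reduce to the case $g\colon\cnkdir{n}{k}\to\cnkdir{n'}{k'}$ via dismantling, but then diverge. The paper first handles the injective case (factoring a subgraph inclusion as a chain of single vertex/edge additions, using Lemmas~\ref{lem:removeone} and~\ref{lem:removeoneedge}), and then treats an arbitrary $g$ by lifting it to an \emph{injective} map $f_d\colon\cnkdir{n}{k}\to\cnkdir{dn'}{dk'}$ into a subdivision of the target, closing a small commutative diagram with the explicit maps $\iota(i)=di$ and $\tau(j)=\lfloor j/d\rfloor$ (which satisfy $\tau\iota=\id$). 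Your amalgam $\dirg{K}$ is instead a combinatorial mapping cylinder: you realise $g$ as a dismantling retraction of $\dirg{K}$ onto $\cnkdir{n'}{k'}$ and then peel $\dirg{K}$ back to $\cnkdir{n}{k}$ by vertex and edge removals. The paper's route is shorter and needs almost no case analysis once the injective case is done; your route is more conceptual but the cyclicity of $\dirg{K}$, the domination claims, and the spanning inclusion $\cnkdir{n}{k}\subseteq\dirg{G}'$ all require verification (they do hold --- the last one because no fibre of $g$ can have size $\ge n-k+1$, since that would force a $2$-cycle in $\cnkdir{n'}{k'}$). One small refinement: removing ``the last forward-edge of some vertex'' only keeps out-neighbourhoods contiguous; to keep in-neighbourhoods contiguous as well the removed edge must also be the \emph{first} in-edge of its head, which is automatic if at each step you choose a vertex of maximum out-degree.
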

\begin{proof}
We proceed in three stages. First, suppose that $f\colon\dirg{G}\to\dirg{H}$ is injective on the vertices, i.e.\ it is an inclusion of a subgraph (not necessarily induced). In that case $f$ can be factored as a composition of cyclic homomorphisms
$$\dirg{G}=\dirg{G}_0\incl\dirg{G}_1\incl\cdots\incl\dirg{G}_s=\dirg{H}$$
where each inclusion $\dirg{G_i}\incl\dirg{G_{i+1}}$ is an extension by a single vertex or by a single edge. Since $\frac{l}{2l+1}<\wf(\dirg{G})\leq\wf(\dirg{G}_i)\leq\wf(\dirg{H})<\frac{l+1}{2l+3}$ the result follows from Lemmas~\ref{lem:removeone} and \ref{lem:removeoneedge}.

Next, we prove the statement for an arbitrary cyclic homomorphism $f\colon\cnkdir{n}{k}\to\cnkdir{n'}{k'}$ with $\frac{l}{2l+1}<\frac{k}{n}\leq\frac{k'}{n'}<\frac{l+1}{2l+3}$. Our first goal is to find a factorization $f=\tau\circ f_d$ where $f_d\colon\cnkdir{n}{k}\to\cnkdir{dn'}{dk'}$ is injective and $\tau\colon\cnkdir{dn'}{dk'}\to\cnkdir{n'}{k'}$ is given by $\tau(j)=\lfloor \tfrac{j}{d}\rfloor$.

Let $j_0\prec \cdots\prec j_s$, with $1\leq s\leq n-1$, be the cyclically ordered vertices of the image of $f$ in $\cnkdir{n'}{k'}$. Since $f$ is a cyclic homomorphism, each preimage $f^{-1}(j_q)$ is an interval modulo $n$. Define the cyclically ordered vertices $i_0\prec\cdots\prec i_s$ in $\cnkdir{n}{k}$ by $f^{-1}(j_q)=\{i_q,\ldots,i_{q+1}-1\}$. Choose $d\geq \max\{|f^{-1}(j_q)|, q=0,\ldots,s\}$ and define a map $f_d:\cnkdir{n}{k}\to\cnkdir{dn'}{dk'}$ by
$$f_d(i)=dj_q+\ddist_n(i_q,i) \quad\mathrm{for}\ i\in\{i_q,\ldots,i_{q+1}-1\}.$$
Note that $0\leq i-i_q<|f^{-1}(j_q)|\leq d$; therefore $f_d$ preserves the cyclic ordering and hence is a cyclic homomorphism so long as it is a homomorphism of directed graphs. It suffices to check that for every $i=0,\ldots,n-1$ we have
$$\ddist_{dn'}(f_d(i),f_d(i+k))\leq dk'.$$
Suppose $i\in f^{-1}(j_q)$ and $i+k\in f^{-1}(j_{q'})$; necessarily $\ddist_{n'}(j_q,j_{q'})\leq k'$. If $\ddist_{n'}(j_q,j_{q'})\leq k'-1$ then
$$\ddist_{dn'}(f_d(i),f_d(i+k))\leq \ddist_{dn'}(dj_q,dj_{q'}+d)\leq dk'. $$
If $j_{q'}=j_q+k'$ then
$$\ddist_{dn'}(f_d(i),f_d(i+k))=dk'+\ddist_n(i_{q'},i+k)-\ddist_n(i_q,i)=dk'+\ddist_n(i,i+k)-\ddist_n(i_q,i_{q'})=dk'+k-\ddist_n(i_q,i_{q'}).$$
We have $\ddist_n(i_q,i_{q'})\geq k$, for otherwise $\ddist_n(i_q-1,i_{q'})\leq k$ and $\ddist_{n'}(f(i_q-1),f(i_{q'}))=\ddist_{n'}(j_{q-1},j_{q'})\geq k'+1$ would contradict the fact that $f$ is a homomorphism. This ends the proof that $f_d$ is a cyclic homomorphism.

Consider the two cyclic homomorphisms $\iota: \cnkdir{n'}{k'}\to\cnkdir{dn'}{dk'}$ and $\tau:\cnkdir{dn'}{dk'}\to\cnkdir{n'}{k'}$ given by
$$\iota(i)=di, \quad \tau(j)=\lfloor \tfrac{j}{d}\rfloor.$$
 We have a commutative diagram
\begin{equation*}
\xymatrix@C50pt{
\cnkdir{n}{k}  \ar[rd]_f \ar[r]^{f_d}_\htpyequiv & \cnkdir{dn'}{dk'} \ar[d]^\tau & \cnkdir{n'}{k'} \ar[l]_\iota^\htpyequiv  \ar[ld]_\htpyequiv^\id\\
&  \cnkdir{n'}{k'} &
}
\end{equation*}
where $\htpyequiv$ indicates the map induces a homotopy equivalence of clique complexes; for the inclusions $f_d$ and $\iota$ this follows from the first part of the proof. From the diagram we conclude that $f$ induces a homotopy equivalence.

Finally, to prove the general case, suppose that $\dirg{G}$ dismantles to $\cnkdir{n}{k}$ and $\dirg{H}$ dismantles to $\cnkdir{n'}{k'}$ with $\wf(\dirg{G})=\frac{k}{n}\le\frac{k'}{n'}=\wf(\dirg{H})$. The composition
$$
\xymatrix@C20pt{
\cnkdir{n}{k} \ar@{^{(}->}[r]^\htpyequiv & \dirg{G} \ar[r]^f & \dirg{H} \ar[r]^\htpyequiv & \cnkdir{n'}{k'}
}
$$
induces a homotopy equivalence of clique complexes, and therefore so does $f$.
\end{proof}

We defer until Section~\ref{sect:singular} a further study of the combinatorics of $\cl(G)$ when $\wf(\dirg{G})=\frac{l}{2l+1}$ is a singular value.

\section{Density implies stability}
\label{sect:density}

In this section we make precise the heuristic observation that the winding fraction $\wfc{X}{r}$ increases with the density of $X$ in $S^1$. For this we recall the notion of covering from metric geometry.

\begin{definition}
\label{def:epsiloncovering}
A subset $X$ of a metric space $M$ is an \emph{$\varepsilon$-covering} if every point of $M$ is within distance less than $\varepsilon$ from some point in $X$.
\end{definition}

A finite subset $X\subset S^1$ is an $\varepsilon$-covering of $S^1$ if and only if every two cyclically consecutive points in $X$ are less than $2\varepsilon$ apart.

As motivation for this section, we note that if $0 <r<\frac13$ and $X\subset S^1$ is a finite subset, then $\vrcless{X}{r}\htpyequiv S^1$ if and only if $X$ is an $(r/2)$-covering of $S^1$. The next proposition is an analogue of this observation for bigger winding fractions and therefore for higher-dimensional homotopy types of $\vrcless{X}{r}$.


\begin{proposition}
\label{prop:approachinghighwf}
Suppose that $0<r<\frac12$ and $X\subseteq S^1$ is a finite subset. If $X$ is an $\varepsilon$-covering of $S^1$ for some $\varepsilon>0$ then $\wfcless{X}{r}>r-2\varepsilon$.
\end{proposition}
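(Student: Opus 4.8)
The plan is to exhibit, for suitable large integers $n$ and $k$, a cyclic homomorphism $\cnkdir{n}{k}\to\vrlessdir{X}{r}$ with $\tfrac{k}{n}>r-2\varepsilon$; since $\wf(\vrlessdir{X}{r})$ is by definition a supremum over $\tfrac{k}{n}$ for which such a homomorphism exists, this yields $\wfcless{X}{r}=\wf(\vrlessdir{X}{r})\geq\tfrac{k}{n}>r-2\varepsilon$. If $2\varepsilon\geq r$ there is nothing to prove, since $\wfcless{X}{r}\geq 0>r-2\varepsilon$; so assume $2\varepsilon<r$. Write $X=\{x_0\prec x_1\prec\cdots\prec x_{m-1}\}$ (note $m\geq 2$, as a single point is not an $\varepsilon$-covering when $\varepsilon<\tfrac12$) and let $\delta$ be the length of the longest gap $\ddist(x_j,x_{j+1})$. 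The $\varepsilon$-covering hypothesis gives $\delta<2\varepsilon$, hence $\delta<r$, so the open interval $(r-2\varepsilon,\,r-\delta)$ is nonempty. I would fix $n$ large enough that this interval contains a fraction $\tfrac{k}{n}$ with $1\leq k$ (possible once $\tfrac1n<2\varepsilon-\delta$) and that $n$ exceeds the reciprocal of the smallest gap; automatically $\tfrac{k}{n}<r<\tfrac12$, so $\cnkdir{n}{k}$ makes sense.

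Next I would place the origin of $S^1$ at $x_0$, so that $0=x_0<x_1<\cdots<x_{m-1}<1$ in coordinates, with the convention $x_m=1$ and $\ddist(x_j,x_{j+1})=x_{j+1}-x_j$. Define $f\colon\{0,\ldots,n-1\}\to X$ by letting $f(i)$ be the point $x_j\in X$ with $\tfrac{i}{n}\in[x_j,x_{j+1})$; that is, round $\tfrac{i}{n}$ backwards to the nearest point of $X$. As a composition of the cyclic monotone maps $i\mapsto\tfrac{i}{n}$ and ``round backwards to $X$'' (whose point-preimages are the half-open arcs $[x_j,x_{j+1})$), the map $f$ is cyclic monotone, and for $n$ as above it is surjective, hence nonconstant. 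The structural point I would then establish is that $f$ extends $n$-periodically to $\ZZ$ and lifts to a \emph{non-decreasing} map $\tilde f\colon\ZZ\to\RR$ with $\tilde f(i+n)=\tilde f(i)+1$, given explicitly by $\tilde f(i)=\tfrac{i}{n}-\phi_i$, where $\phi_i\in[0,\delta)$ is the clockwise distance from $f(i)$ to the point $\tfrac{i}{n}$: each increment $\tilde f(i+1)-\tilde f(i)$ is either $0$ or a sum of gap lengths, hence $\geq 0$, and over $n$ consecutive steps $\tilde f$ increases by $\sum_j\ddist(x_j,x_{j+1})=1$. The upshot is that for $0\leq s\leq k$ the quantity $\tilde f(i+s)-\tilde f(i)$ lies in $[0,1)$, and therefore equals the clockwise distance $\ddist(f(i),f(i+s))$.

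Granting this, for any edge $i\to i+s$ of $\cnkdir{n}{k}$, where $1\leq s\leq k$, I would estimate
\[
\ddist(f(i),f(i+s))=\tilde f(i+s)-\tilde f(i)=\tfrac{s}{n}+\phi_i-\phi_{i+s}\;\leq\;\tfrac{s}{n}+\phi_i\;<\;\tfrac{k}{n}+\delta\;<\;r,
\]
while $\tilde f(i+s)>\tilde f(i)$ unless $f(i)=f(i+s)$ (as $\tilde f$ is non-decreasing). Hence either $f(i)=f(i+s)$, or $f(i)$ and $f(i+s)$ are distinct points of $X$ at clockwise distance strictly between $0$ and $r$, which means $f(i)\to f(i+s)$ is an edge of $\vrlessdir{X}{r}$. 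Therefore $f$ is a homomorphism of directed graphs, and being cyclic monotone and nonconstant it is a cyclic homomorphism $\cnkdir{n}{k}\to\vrlessdir{X}{r}$, completing the argument as explained above.

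The step I expect to demand the most care is checking that $\tilde f(i)=\tfrac{i}{n}-\phi_i$ really is a non-decreasing, degree-one lift of $f$ — this is exactly where the covering hypothesis enters, through the uniform bound $\phi_i<\delta$ on the rounding error — together with the purely elementary bookkeeping of choosing $n$ large enough that an integer $k$ lies in $\bigl(n(r-2\varepsilon),\,n(r-\delta)\bigr)$ with $k\geq 1$ and that $f$ is surjective.
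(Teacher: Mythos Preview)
Your proof is correct and follows essentially the same strategy as the paper: build a cyclic homomorphism $\cnkdir{n}{k}\to\vrlessdir{X}{r}$ by sending $i$ to a nearby point of $X$, with $\tfrac{k}{n}$ just above $r-2\varepsilon$. The only differences are cosmetic. The paper rounds $\tfrac{i}{n}$ to the \emph{nearest} point of $X$ (error $<\varepsilon'$ for some $\varepsilon'<\varepsilon$) and bounds the distance via the triangle inequality with two $\varepsilon'$ terms; you round \emph{backwards} (error $<\delta$, the maximal gap) and use the monotone lift $\tilde f$ to control the directed distance with a single $\delta$ term. Your choice makes the directed-graph verification more transparent, at the cost of the extra bookkeeping with $\tilde f$; the paper's version is terser but leaves the passage from metric to clockwise distance implicit.
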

\begin{proof}
We can assume that $r-2\varepsilon>0$. There exists an $\varepsilon'<\varepsilon$ such that $X$ is also an $\varepsilon'$-covering. It suffices to show that whenever $0<\frac{k}{n}< r-2\varepsilon'$ then there is a cyclic homomorphism $\cnkdir{n}{k}\to\vrlessdir{X}{r}$, since then we get
$$\wfcless{X}{r}\geq r-2\varepsilon'>r-2\varepsilon.$$

Fix $0<\frac{k}{n}< r-2\varepsilon'$. For every $i=0,\ldots,n-1$ let $x_i\in X$ be the point closest to $\frac{i}{n}$. (The uniqueness of each $x_i$ can be assured by an infinitesimal rotation, if necessary) Then $x_0,\ldots x_{n-1}$ appear on $S^1$ in this clockwise order (possibly with repetitions) and, since $\varepsilon'<\frac12 r<\frac14$, not all of the $x_i$ are the same. By the triangle inequality
\begin{eqnarray*}
d(x_i,x_{i+k})&\leq & d(x_i,\tfrac{i}{n})+d(\tfrac{i}{n},\tfrac{i+k}{n})+d(\tfrac{i+k}{n},x_{i+k})\\
&< & \varepsilon'+\tfrac{k}{n}+\varepsilon'=\tfrac{k}{n}+2\varepsilon'< r.
\end{eqnarray*}
It follows that the map $i\mapsto x_i$ determines a cyclic homomorphism $\cnkdir{n}{k}\to\vrlessdir{X}{r}$, and the proof is complete.
\end{proof}

This leads to the following conclusion.

\begin{proposition}
\label{prop:denseinduced}
Suppose that $\frac{l}{2l+1}<r\leq r'<\frac{l+1}{2l+3}$ and $\delta=r-\frac{l}{2l+1}$. If $X\subset Y$ are finite subsets of $S^1$ and $X$ is a $(\frac12\delta)$-covering of $S^1$, then in the diagram
$$
\xymatrix{
\vrcleq{X}{r} \ar@{^{(}->}[r] & \vrcleq{Y}{r'}\\
\vrcless{X}{r} \ar@{^{(}->}[r] \ar@{^{(}->}[u] & \vrcless{Y}{r'} \ar@{^{(}->}[u]\\
}
$$
all spaces are homotopy equivalent to $S^{2l+1}$ and all maps are homotopy equivalences. 

For the spaces in the bottom row and the bottom map the same conclusion holds under the weaker assumption $\frac{l}{2l+1}<r\leq r'\leq \frac{l+1}{2l+3}$.
\end{proposition}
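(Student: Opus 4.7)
The plan is to verify that all four winding fractions involved in the diagram lie in the open interval $\bigl(\tfrac{l}{2l+1},\tfrac{l+1}{2l+3}\bigr)$ and then invoke the machinery already built up.

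First I would establish the lower bound. Since $X$ is a $(\tfrac12\delta)$-covering of $S^1$, Proposition~\ref{prop:approachinghighwf} (applied with $\varepsilon=\tfrac12\delta$) yields
\[
\wfcless{X}{r} > r - 2\cdot\tfrac12\delta = r-\delta = \tfrac{l}{2l+1}.
\]
Because the inclusions $\vrlessdir{X}{r}\incl\vrleqdir{X}{r}\incl\vrleqdir{Y}{r'}$ and $\vrlessdir{X}{r}\incl\vrlessdir{Y}{r'}\incl\vrleqdir{Y}{r'}$ are all subgraph inclusions of cyclic graphs, Lemma~\ref{lem:compose}(c) says each is a cyclic homomorphism, and Proposition~\ref{prop:wfbasic}(b) therefore gives the monotonicity
\[
\tfrac{l}{2l+1} < \wfcless{X}{r}\le\wfcleq{X}{r}\le\wfcleq{Y}{r'},\qquad \wfcless{X}{r}\le\wfcless{Y}{r'}\le\wfcleq{Y}{r'}.
\]

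Next I would establish the upper bound. By Proposition~\ref{prop:wfbasic}(c),
\[
\wfcleq{Y}{r'}\le r' < \tfrac{l+1}{2l+3},
\]
so all four winding fractions lie strictly between $\tfrac{l}{2l+1}$ and $\tfrac{l+1}{2l+3}$. Corollary~\ref{cor:vrchtpy} then tells us that each of the four complexes in the diagram is homotopy equivalent to $S^{2l+1}$, and Proposition~\ref{prop:inducedhtpyequiv} tells us that each of the four inclusions, being a cyclic homomorphism between cyclic graphs whose winding fractions both lie in the open interval $\bigl(\tfrac{l}{2l+1},\tfrac{l+1}{2l+3}\bigr)$, induces a homotopy equivalence of clique complexes.

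For the weaker statement about the bottom row, the only place the strict inequality $r'<\tfrac{l+1}{2l+3}$ was used is in bounding $\wfcleq{Y}{r'}$ above. If we drop to $r'=\tfrac{l+1}{2l+3}$, Proposition~\ref{prop:wfbasic}(c) still provides the \emph{strict} inequality $\wfcless{Y}{r'}<r'=\tfrac{l+1}{2l+3}$, so the winding fractions of $\vrlessdir{X}{r}$ and $\vrlessdir{Y}{r'}$ remain inside the open interval and the same argument applies. There is no genuine obstacle — the proof is essentially a bookkeeping exercise assembling Proposition~\ref{prop:approachinghighwf}, Proposition~\ref{prop:wfbasic}, Corollary~\ref{cor:vrchtpy}, and Proposition~\ref{prop:inducedhtpyequiv} — the only point requiring care is that the strict inequality in $\wfcless{Y}{r'}<r'$ is precisely what allows the weaker hypothesis to suffice for the bottom row but not for the top.
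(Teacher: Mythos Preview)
Your proposal is correct and follows essentially the same route as the paper: use Proposition~\ref{prop:approachinghighwf} for the lower bound on $\wfcless{X}{r}$, Proposition~\ref{prop:wfbasic}(c) for the upper bound on $\wfcleq{Y}{r'}$ (and the strict inequality for $\wfcless{Y}{r'}$ in the weaker case), and then conclude via Proposition~\ref{prop:inducedhtpyequiv}. Your write-up is a bit more explicit about the monotonicity chain and separately invokes Corollary~\ref{cor:vrchtpy}, but the argument is the same.
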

\begin{proof}
Proposition~\ref{prop:approachinghighwf} gives $\wfcless{X}{r}>r-\delta=\frac{l}{2l+1}$ and by Lemma~\ref{lem:cycliceasy}(b) we have $\wfcleq{Y}{r'}\leq r'<\frac{l+1}{2l+3}$. Hence the four cyclic graphs underlying the diagram have their winding fractions in the open interval $(\frac{l}{2l+1},\frac{l+1}{2l+3})$. The statement now follows from Proposition~\ref{prop:inducedhtpyequiv}.

If $r'=\frac{l+1}{2l+3}$ then by Proposition~\ref{prop:wfbasic}(c) we still have $\wfcless{Y}{r'}<r'=\frac{l+1}{2l+3}$ and Proposition~\ref{prop:inducedhtpyequiv} applies in the bottom row.
\end{proof}

We end this section with a partial converse of Proposition~\ref{prop:approachinghighwf}.

\begin{proposition}
\label{prop:highwfisdense}
Suppose that $\frac{l}{2l+1}<r$ and $\delta=r-\frac{l}{2l+1}$. If $X\subseteq S^1$ is a finite subset with  $\wfcless{X}{r}>\frac{l}{2l+1}$ then $X$ is a  $\left(\left(l+\frac12\right)\delta\right)$-covering of $S^1$.
\end{proposition}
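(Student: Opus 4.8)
The plan is to replace $X$ by the dismantling core of its directed Vietoris--Rips graph and then telescope an edge‑length estimate along the edges of that core. By Propositions~\ref{prop:cnkisfinal} and \ref{prop:wfofcore}, $\vrlessdir{X}{r}$ dismantles to an induced subgraph isomorphic to some $\cnkdir{n^*}{k^*}$ with $\frac{k^*}{n^*}=\wfcless{X}{r}>\frac{l}{2l+1}$, and the dismantling yields an \emph{injective} cyclic homomorphism $f\colon\cnkdir{n^*}{k^*}\incl\vrlessdir{X}{r}$; let $X^*=f(V(\cnkdir{n^*}{k^*}))\subseteq X$ be its image. Since a covering property is inherited by supersets, it suffices to prove that $X^*$ is a $\left(\left(l+\frac12\right)\delta\right)$-covering of $S^1$, i.e.\ (by the characterization of coverings recalled after Definition~\ref{def:epsiloncovering}) that $d_j:=\ddist(f(j),f(j+1))<(2l+1)\delta$ for every $j$. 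Here $d_j>0$ for all $j$ because $f$ is injective, and $\sum_{j=0}^{n^*-1}d_j=1$ by \eqref{eq:degree1vr}.

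Pass to a lift $\tilde f\colon\ZZ\to\RR$ with $\tilde f(i+1)-\tilde f(i)=d_i$ and $\tilde f(i+n^*)=\tilde f(i)+1$. For each $i$ the edge $i\diredge i+k^*$ of $\cnkdir{n^*}{k^*}$ is carried by $f$ to an edge of $\vrlessdir{X}{r}$, so $\ddist(f(i),f(i+k^*))<r$; and since $\tilde f(i+k^*)-\tilde f(i)=\sum_{\ell=i}^{i+k^*-1}d_\ell$ is a sum of $k^*<n^*$ of the strictly positive numbers $d_0,\dots,d_{n^*-1}$ whose total is $1$, it is ${}<1$ and hence equals $\ddist(f(i),f(i+k^*))$. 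Thus $\tilde f(i+k^*)-\tilde f(i)<r$ for all $i$. Fixing $j$ and telescoping along $j\diredge j+k^*\diredge j+2k^*\diredge\cdots\diredge j+(2l+1)k^*$ gives
\[
\tilde f(j+(2l+1)k^*)-\tilde f(j)=\sum_{t=0}^{2l}\bigl(\tilde f(j+(t+1)k^*)-\tilde f(j+tk^*)\bigr)<(2l+1)r.
\]
From $\frac{l}{2l+1}<\frac{k^*}{n^*}<\frac12$ (the right‑hand inequality because $0\le k^*<\frac12 n^*$) one gets $ln^*<(2l+1)k^*<(l+1)n^*$, so $(2l+1)k^*=ln^*+\rho$ with $1\le\rho\le n^*-1$. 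Hence $\tilde f(j+(2l+1)k^*)=\tilde f(j+\rho)+l$, and the display becomes $\tilde f(j+\rho)-\tilde f(j)<(2l+1)r-l=(2l+1)\delta$. Since $\rho\ge1$ we have $\tilde f(j+\rho)-\tilde f(j)=\sum_{\ell=j}^{j+\rho-1}d_\ell\ge d_j$, and therefore $d_j<(2l+1)\delta$. As $j$ was arbitrary this proves the claim.

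The only point I expect to require care is why the reduction to the core is needed. For a general cyclic homomorphism $\cnkdir{n}{k}\to\vrlessdir{X}{r}$ one could have $d_j=0$ along a block of $n-k$ consecutive indices, which would force $\tilde f(i+k)-\tilde f(i)=1$ (rather than $\ddist(f(i),f(i+k))$) for some $i$ in the telescoping chain and break the inequality $\tilde f(i+k^*)-\tilde f(i)<r$. Passing to the core removes this degenerate possibility at no cost, since the core inclusion is injective so all consecutive distances $d_j$ are positive, and the inclusion $X^*\subseteq X$ transfers the covering conclusion back to $X$. The rest — pinning down $\rho$ and the telescoping — is routine arithmetic.
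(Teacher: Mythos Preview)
Your proof is correct and follows essentially the same route as the paper: dismantle to the core $\cnkdir{n}{k}\incl\vrlessdir{X}{r}$, then telescope along the chain $j\to j+k\to\cdots\to j+(2l+1)k$ to bound the clockwise gap from $x_j$ to its successor by $(2l+1)\delta$. The paper phrases the telescoping as ``the path performs exactly $l$ revolutions plus an arc of length $<(2l+1)\delta$,'' while you encode the same information via the explicit lift $\tilde f$ and the residue $\rho=(2l+1)k^*-ln^*$; your derivation of $1\le\rho\le n^*-1$ from $k^*/n^*<\tfrac12$ is a slightly cleaner way to avoid the paper's separate ``assume $(2l+1)\delta<1$'' reduction, but the arguments are otherwise identical.
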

\begin{proof}
Suppose that $\vrlessdir{X}{r}$ dismantles to $\cnkdir{n}{k}$ with $\frac{k}{n}>\frac{l}{2l+1}$. Let $x_0\prec\cdots\prec x_{n-1}$ be the points of $X$ which induce the subgraph $\cnkdir{n}{k}\incl \vrlessdir{X}{r}$. The proof will be complete if we show the following claim: for every $i$ there exists a $j\neq i$ such that $\ddist(x_i,x_j)<(2l+1)\delta$. Without loss of generality it suffices to prove this for $i=0$. We can assume that $(2l+1)\delta<1$, for otherwise the claim is trivial.

Consider the directed path in $\vrlessdir{X}{r}$:
$$x_0\to x_k\to x_{2k}\to\cdots\to x_{(2l+1)k}.$$
Since $(2l+1)k>nl$ this path performs at least $l$ full revolutions around the circle, hence
$$\sum_{i=0}^{2l}\ddist(x_{ik},x_{(i+1)k})>l.$$
On the other hand
$$\sum_{i=0}^{2l}\ddist(x_{ik},x_{(i+1)k})<(2l+1)r=l+(2l+1)\delta<l+1.$$
It follows that the directed path covers exactly $l$ full circle lengths plus the arc $[x_0,x_{(2l+1)k}]_{S^1}$ whose length, by the last inequality, is less than $(2l+1)\delta$. That proves the claim.
\end{proof}

The results of this section can be summarized as follows. Suppose that $\frac{l}{2l+1}<r<\frac{l+1}{2l+3}$ and $\delta=r-\frac{l}{2l+1}$. Then we know (Proposition~\ref{prop:wfbasic}(c)) that for any finite subset $X\subset S^1$ we have $\wfcless{X}{r}<r<\frac{l+1}{2l+3}$. If we think of $X$ as an evolving (increasing) set, then the homotopy type of $\vrcless{X}{r}$ stabilizes at $S^{2l+1}$ at the same time when $X$ becomes an $\varepsilon$-covering for some $\varepsilon\in[\frac12\delta,(l+\frac12)\delta]$. If $l$ is constant this is a very tight window as $\delta\to 0$.

\section{Evolution of random samples}
\label{sect:evolution}
We now apply the winding fraction to study the evolution of Vietoris--Rips complexes of random subsets of $S^1$. Let $\nicex_n\subseteq S^1$ be a subset obtained by sampling $n$ points uniformly and independently from $S^1$. The connectivity of the graph $\vr{\nicex_n}{r}$ when $r=r(n)\to 0$ as $n\to \infty$ has been extensively studied  by many authors (see \cite{Imany2008} and the references therein). We obtain asymptotic thresholds for the higher-dimensional connectivity of $\vrc{\nicex_n}{r}$ when $r$ is large. In particular, we analyze how many random samples are required until the homotopy type of $\vrc{\nicex_n}{r}$ matches that of $\vrc{S^1}{r}$, extending Latschev's approximation result \cite{Latschev2001} for $S^1$ to $r$ values that are no longer sufficiently small.

In this section we always assume that $l\geq 0$ is fixed and $\frac{l}{2l+1}<r<\frac{l+1}{2l+3}$. We define $\delta=r-\frac{l}{2l+1}$. The probability that two points of $\nicex_n$ are in distance exactly $r$ for any fixed $r$ is $0$, and therefore all results hold for $\mathbf{VR}_<$ as well as $\mathbf{VR}_\leq$. Just as non-trivial asymptotic results about the connectedness of the graph $\vr{\nicex_n}{r}$ can be obtained for $r\to 0$ as $n\to\infty$, in our higher-dimensional regime it makes sense to assume that $r\to\frac{l}{2l+1}$, that is $\delta\to 0$, as $n\to\infty$. We use the standard asymptotic notation $f(\delta)=\Theta(g(\delta))$ as $\delta\to 0$ when there are constants $C_1, C_2>0$ (which can depend on $l$) such that $C_1g(\delta)\leq f(\delta)\leq C_2g(\delta)$.

Let $M(r)$ and $N(r)$ be the random variables counting the number $n$ of random points in $S^1$ until $\wfc{\nicex_n}{r}$ reaches, resp.\ exceeds, the value $\frac{l}{2l+1}$. Formally, consider the random process $(\nicex_1,\nicex_2,\ldots)$ where $\nicex_{i+1}$ is obtained from $\nicex_i$ by adding a single uniformly random point. Define
\begin{eqnarray*}
M(r)=&\min\{~n~:\wfc{\nicex_n}{r}\geq\frac{l}{2l+1}\},\\
N(r)=&\min\{~n~:\wfc{\nicex_n}{r}>\frac{l}{2l+1}\},
\end{eqnarray*}
where $\min\emptyset=\infty$. The random variables $M(r)$ and $N(r)$ describe the last two transition points in the evolution of $\vrc{\nicex_n}{r}$, since $M(r)\le n<N(r)$ means $\vrc{\nicex_n}{r}$ is homotopy equivalent to a wedge of copies of $S^{2l}$, and $n\geq N(r)$ gives $\vrc{\nicex_n}{r}\htpyequiv S^{2l+1}$. We will determine the asymptotic expectations $\expect[M(r)]$ and $\expect[N(r)]$.

\begin{theorem}
\label{thm:thresholds}
Let $\frac{l}{2l+1}<r<\frac{l+1}{2l+3}$ for some fixed $l\geq 0$ and let $\delta=r-\frac{l}{2l+1}$. We have
$$
\expect[M(r)]=\Theta\left(\left(\frac{1}{\delta}\right)^{\frac{2l}{2l+1}}\right),\quad \expect[N(r)]=\Theta\left(\frac{1}{\delta}\log\frac{1}{\delta}\right)\quad \mathrm{as}\ \delta\to 0.
$$
In particular, the expected number of random points $n$ until $\vrc{\nicex_n}{r}\htpyequiv S^{2l+1}$ is $\Theta(\frac{1}{\delta}\log\frac{1}{\delta})$ as $\delta\to 0$.
\end{theorem}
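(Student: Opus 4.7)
The plan is to treat $\expect[N(r)]$ and $\expect[M(r)]$ separately, using Section~\ref{sect:density} and Proposition~\ref{prop:cychomoall} to translate each winding-fraction condition into an explicit combinatorial event, then applying standard probabilistic estimates.

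For $\expect[N(r)]$, I would use the sandwich supplied by Propositions~\ref{prop:approachinghighwf} and \ref{prop:highwfisdense}. Writing $T_\varepsilon$ for the first $n$ at which $\nicex_n$ is an $\varepsilon$-covering of $S^1$, one has $T_{(l+1/2)\delta}\leq N(r)\leq T_{\delta/2}$: a $(\tfrac12\delta)$-covering already forces $\wfcless{\nicex_n}{r}>r-\delta=\tfrac{l}{2l+1}$, and conversely $\wfcless{\nicex_n}{r}>\tfrac{l}{2l+1}$ implies that $\nicex_n$ is a $((l+\tfrac12)\delta)$-covering. The asymptotic $\expect[T_\varepsilon]=\Theta((1/\varepsilon)\log(1/\varepsilon))$ as $\varepsilon\to 0$ is the classical coupon-collector-type estimate, coming from the fact that the maximum spacing of $n$ uniform iid samples from $S^1$ has order $(\log n)/n$; both ends of the sandwich then give $\expect[N(r)]=\Theta((1/\delta)\log(1/\delta))$.

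For $M(r)$, Proposition~\ref{prop:cychomoall} applied with the test graph $\cnkdir{2l+1}{l}$ (of winding fraction $\tfrac{l}{2l+1}$) identifies $\wfc{\nicex_n}{r}\geq\tfrac{l}{2l+1}$ with the existence of a \emph{good configuration}: an ordered $(2l+1)$-tuple $x_0\prec\cdots\prec x_{2l}$ in $\nicex_n$ with $\ddist(x_i,x_{i+l})\leq r$ for every $i$. Let $X_n$ count the good $(2l+1)$-subsets, so $M(r)=\min\{n:X_n\geq 1\}$. Writing the cyclic gaps of $2l+1$ iid uniform points as $a_i=\tfrac{1}{2l+1}+\xi_i$ with $\sum_i\xi_i=0$, the good event becomes $\sum_{j=i}^{i+l-1}\xi_j\leq\delta$ for every $i$, which cuts out a polytope of $2l$-dimensional volume $\Theta(\delta^{2l})$ in the spacing simplex; hence $\prob(2l+1 \text{ iid uniform points are good})=\Theta(\delta^{2l})$ and $\expect[X_n]=\binom{n}{2l+1}\Theta(\delta^{2l})=\Theta(n^{2l+1}\delta^{2l})$. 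Markov's inequality at $n=c\,\delta^{-2l/(2l+1)}$ with $c$ small then yields $\prob(M(r)>n)\geq\tfrac12$, and so $\expect[M(r)]=\Omega(\delta^{-2l/(2l+1)})$.

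The matching upper bound is a second-moment estimate. For two $(2l+1)$-subsets $A,B$ with $m=|A\cap B|$, rotation invariance gives $\prob(A,B\text{ good})=\prob(A\text{ good})\prob(B\text{ good})$ whenever $m\leq 1$ (condition on the single shared point, if any, and use that the conditional probability is rotation-invariant). For $m\geq 2$, conditioning on the $m$ shared positions yields $\prob(A,B\text{ good})\leq O(\delta^{(m-1)+2(2l+1-m)})=O(\delta^{4l+1-m})$: the shared points are compatible with a good configuration only with probability $O(\delta^{m-1})$, and given compatibility the two disjoint extensions must independently fall into $O(\delta)$-windows, contributing $O(\delta^{2l+1-m})$ each. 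At the critical scale $n_0=C\delta^{-2l/(2l+1)}$ with $C$ large, each stratum $2\leq m\leq 2l$ contributes a factor $\delta^{(2l+1-m)/(2l+1)}\to 0$ relative to $\expect[X_{n_0}]^2$, while the diagonal $m=2l+1$ contributes only $\expect[X_{n_0}]=\Theta(C^{2l+1})$, which is $o(\expect[X_{n_0}]^2)$; hence $\expect[X_{n_0}^2]\leq(1+o(1))\expect[X_{n_0}]^2$ and Paley--Zygmund gives $\prob(X_{n_0}\geq 1)\geq q'$ for some universal $q'>0$. Since the existence of a good configuration is monotone in the point set, partitioning $\nicex_n$ into $t$ independent batches of $n_0$ points yields $\prob(M(r)>tn_0)\leq(1-q')^t$, and summing $\expect[M(r)]=\sum_{n\geq 0}\prob(M(r)>n)$ gives $\expect[M(r)]=O(\delta^{-2l/(2l+1)})$. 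The final ``in particular'' assertion is immediate from Theorem~\ref{thm:cyclichtpy}, which identifies $\{n:\vrc{\nicex_n}{r}\htpyequiv S^{2l+1}\}$ with $\{n\geq N(r)\}$. The main technical obstacle is the second-moment bookkeeping: verifying the compatibility-probability estimate at every overlap level and confirming that the diagonal contribution is dominated, both of which are likely to be relegated to a probability appendix.
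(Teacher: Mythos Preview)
Your argument for $\expect[N(r)]$ is exactly the paper's: sandwich $N(r)$ between two covering times via Propositions~\ref{prop:approachinghighwf} and~\ref{prop:highwfisdense}, then invoke the coupon--collector asymptotic $\expect[C(\varepsilon)]=\Theta(\varepsilon^{-1}\log\varepsilon^{-1})$.

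For $\expect[M(r)]$ your route is genuinely different. The paper first proves (Lemma~\ref{lem:almostregular}) that $\wfc{X}{r}\geq\tfrac{l}{2l+1}$ is equivalent to $X$ containing a $(\Theta(\delta),2l+1)$-regular subset, and then \emph{discretizes}: it introduces $R_m(\varepsilon)$, the waiting time for an $(\varepsilon,m)$-regular subset, and shows $R_{2l+1}(4l\delta)\leq M(r)\leq R_{2l+1}(\tfrac12\delta)$. The asymptotic $\expect[R_m(\varepsilon)]=\Theta(\varepsilon^{-(m-1)/m})$ is then obtained in Appendix~\ref{sect:appendixB} by a balls-into-bins reduction to the generalized birthday paradox of Klamkin--Newman (with a colouring trick to handle the ``different vertices of the $m$-gon'' constraint). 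You instead stay in the continuous model and run a first/second-moment argument directly on the count $X_n$ of good $(2l+1)$-tuples, followed by an independent-batches tail bound. Your characterization via Proposition~\ref{prop:cychomoall} is exactly Lemma~\ref{lem:almostregular} in disguise, so the geometric input is the same; what differs is the probabilistic packaging. The paper's approach is shorter and more modular once the Klamkin--Newman result is granted, while yours is self-contained but forces you to control the overlap strata $2\le m\le 2l+1$ by hand. One small imprecision: the diagonal stratum $m=2l+1$ contributes $\expect[X_{n_0}]=\Theta(C^{2l+1})$, which is not $o(\expect[X_{n_0}]^2)$ as $\delta\to 0$ for fixed $C$ but rather $O(C^{-(2l+1)})\cdot\expect[X_{n_0}]^2$; this is still enough for the second-moment method (you only need $\expect[X_{n_0}^2]\le K\,\expect[X_{n_0}]^2$ for some constant $K$), so the argument goes through after adjusting the wording.
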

Note that the winding fraction of $\frac{l}{2l+1}$ is achieved much sooner than it is exceeded (in fact $\expect[M(r)]$ is sublinear in $1/\delta$). It means that we are expecting a long interval of $n$ for which $\vrc{\nicex_n}{r}$ is a wedge of $2l$-spheres, before reaching the final homotopy type of $S^{2l+1}$. 

\begin{example}
\label{ex:randomexample}
Suppose $\frac{3}{7}<r=0.432<\frac{4}{9}$ with $l=3$, $\delta\approx 0.00343$, and $1/\delta\approx 291$. Figure~\ref{fig:randomexample} shows the average evolution of $\vrc{\nicex_n}{r}$ for $1\leq n\leq 1000$. The red curve plots the average winding fraction, which rapidly approaches $3/7$ and then exceeds it around $n=600$ to approach $r$. The homotopy type then stabilizes at $S^7$. For clarity of the presentation the blue curve depicts the average \emph{intrinsic dimension}, which we define as $2l$ when $\wf(\cdot)=\frac{l}{2l+1}$ and as $2l+1$ when $\frac{l}{2l+1}<\wf(\cdot)<\frac{l+1}{2l+3}$.
\end{example}

\begin{figure}
\includegraphics[scale=1]{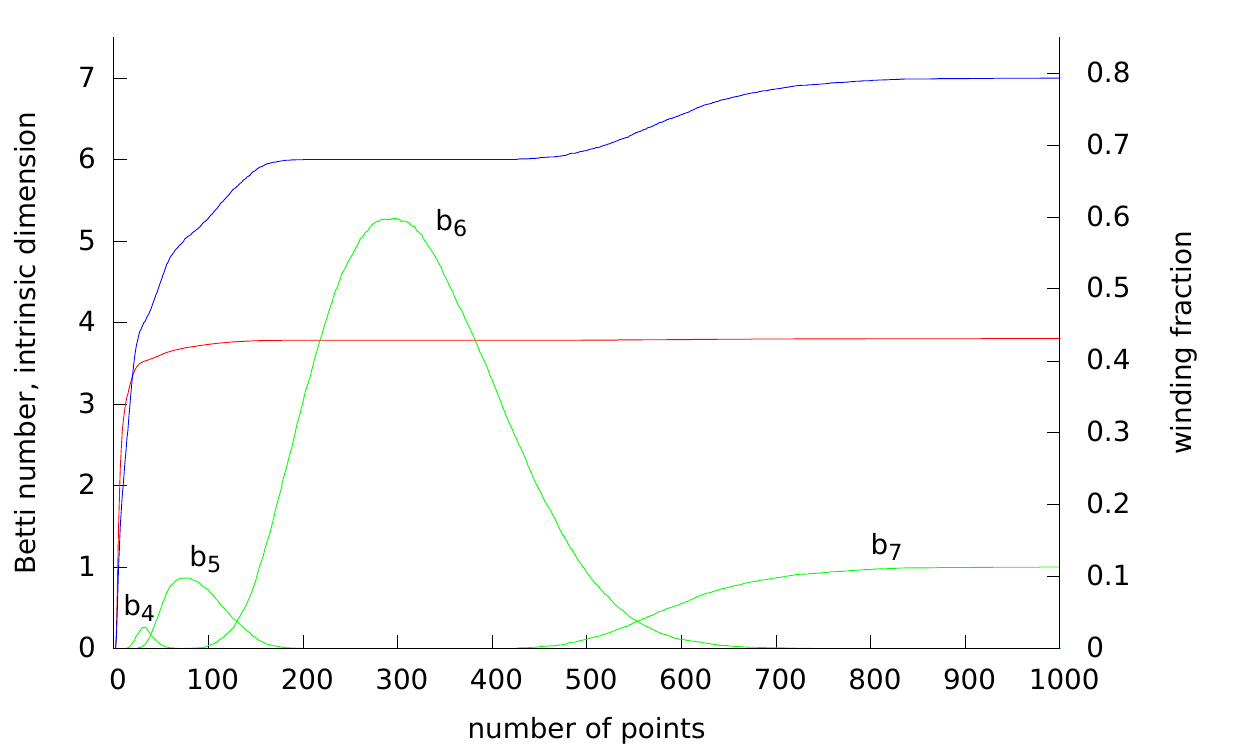}
\caption{The evolution of $\vrc{\nicex_n}{r}$ with $r=0.432$; see Example~\ref{ex:randomexample}. The red curve is the average winding fraction, the blue curve is the average intrinsic dimension, and the green curves are the average Betti numbers $b_4$, $b_5$, $b_6$, $b_7$ (from left to right). Note the support of $b_6$ (for example) mostly coincides with the average intrinsic dimension being close to 6.}
\label{fig:randomexample}
\end{figure}

We first prove the second claim of Theorem~\ref{thm:thresholds}. For $\varepsilon>0$ let $C(\varepsilon)$ be the random variable which counts the number of steps until $\nicex_n$ becomes a $(\frac12\varepsilon)$-covering of $S^1$. By Propositions~\ref{prop:approachinghighwf} and \ref{prop:highwfisdense} we have
\begin{equation}
\label{eq:twosideddensity}
C((2l+1)\delta)\leq N(r)\leq C(\delta).
\end{equation} 
It is well-known that 
\begin{equation}
\label{eq:expectC}
\expect[C(\varepsilon)]=\Theta\left(\varepsilon^{-1}\log\varepsilon^{-1}\right)
\end{equation}
 as $\varepsilon\to 0$ (see \cite[Equation (4.16)]{Solomon}, which gives a more precise answer). The asymptotics of \eqref{eq:expectC} can also be seen heuristically as follows. Divide $S^1$ into $K=\Theta(\varepsilon^{-1})$ arcs of length $\Theta(\varepsilon)$. Think of the random process $\nicex_n$ as throwing balls into $K$ urns (arcs) independently. Then the event that $\nicex_n$ is a $\varepsilon$-covering coincides with the event that each urn contains a ball. By the classical coupon collector's problem this happens, in expectation, after $n=\Theta(K\log K)$ balls as $K\to\infty$. Combining \eqref{eq:expectC} with \eqref{eq:twosideddensity} gives $\expect[N(r)]=\Theta(\delta^{-1}\log\delta^{-1})$ as $\delta\to 0$.

\medskip
To prove the first statement of Theorem~\ref{thm:thresholds} we need some auxiliary results. A subset $Y\subset S^1$ will be called \emph{$(\varepsilon,m)$-regular} if $|Y|=m$ and there is a bijection from $Y$ to the vertices of some regular inscribed $m$-gon which moves each point by distance less than $\epsilon$. We previously showed that achieving $\wfc{X}{r}>\frac{l}{2l+1}$ coincides with $X$ being a  $\Theta(\delta)$-covering, and the next lemma shows that $\wfc{X}{r}\geq \frac{l}{2l+1}$ is achieved when $X$ contains a $(\Theta(\delta),2l+1)$-regular subset.
\begin{lemma}
\label{lem:almostregular}
Let $\frac{l}{2l+1}<r<\frac{l+1}{2l+3}$ and $\delta=r-\frac{l}{2l+1}$. For a finite subset $X\subseteq S^1$ we have 
\begin{itemize}
\item[(a)] If $X$ has a $(\frac12\delta, 2l+1)$-regular subset then $\wfc{X}{r}\geq \frac{l}{2l+1}$.
\item[(b)] If $\wfc{X}{r}\geq \frac{l}{2l+1}$ then $X$ has a $(4l\delta, 2l+1)$-regular subset.
\end{itemize}
\end{lemma}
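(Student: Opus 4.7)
\textbf{Proof plan for Lemma~\ref{lem:almostregular}.}

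For part (a), I will exhibit a direct cyclic homomorphism witnessing the bound on the winding fraction. Label the given subset $Y = \{y_0, \ldots, y_{2l}\} \subseteq X$ and, after rotating $S^1$, arrange that $d_{S^1}(y_i, \tfrac{i}{2l+1}) < \tfrac{\delta}{2}$. Since $\delta < \tfrac{1}{(2l+1)(2l+3)}$ is far smaller than the polygon spacing $\tfrac{1}{2l+1}$, the points $y_i$ inherit the cyclic order of the $\tfrac{i}{2l+1}$. I will verify that $i \mapsto y_i$ defines a cyclic homomorphism $\cnkdir{2l+1}{l} \to \vrlessdir{X}{r}$: cyclic monotonicity is immediate, and lifting to $\RR$, for each edge $i \diredge i+s$ with $1 \leq s \leq l$ the triangle inequality gives $\ddist(y_i, y_{i+s}) < \tfrac{s}{2l+1} + \delta \leq \tfrac{l}{2l+1} + \delta = r$. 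Hence $\wfcless{X}{r} \geq \tfrac{l}{2l+1}$, and the same bound then follows for $\wfcleq{X}{r}$ by Proposition~\ref{prop:wfbasic}(c).

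Part (b) proceeds in two stages: I will first produce an \emph{injective} cyclic homomorphism $g\colon \cnkdir{2l+1}{l} \to \vrdir{X}{r}$, then show its image is quantitatively close to a regular inscribed $(2l+1)$-gon. By Proposition~\ref{prop:cnkisfinal}, $\vrdir{X}{r}$ dismantles to some $\cnkdir{n'}{k'}$ with $\tfrac{k'}{n'} = \wfc{X}{r} \geq \tfrac{l}{2l+1}$; combined with the constraint $k' < n'/2$ and integrality of $k'$, a short case analysis rules out $n' \leq 2l$, so $n' \geq 2l+1$. The formula $i \mapsto \lfloor n' i/(2l+1) \rfloor$ defines an injective cyclic homomorphism $\cnkdir{2l+1}{l} \to \cnkdir{n'}{k'}$ (edges are preserved because $\lceil n' s/(2l+1) \rceil \leq k'$ for $s \leq l$), and composition with the subgraph inclusion from the dismantling yields $g$. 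Setting $x_i = g(i)$, $a_j = \ddist(x_j, x_{j+1}) > 0$, and $b_j = a_j - \tfrac{1}{2l+1}$, equation~\eqref{eq:degree1vr} gives $\sum a_j = 1$ and $\sum b_j = 0$. The per-edge bounds $\ddist(x_i, x_{i+l}) \leq r$ together with the identity $\sum_i \ddist(x_i, x_{i+l}) = l$ (each arc $[x_j, x_{j+1}]_{S^1}$ is covered by $l$ of the chords) force $U_i := \sum_{j=0}^{l-1} b_{i+j} \in [-2l\delta, \delta]$.

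The crux is bounding the cumulative partial sums $B_i := \sum_{j<i} b_j$, which satisfy $B_0 = B_{2l+1} = 0$ and $B_{i+l} - B_i = U_i$. Since $\gcd(l, 2l+1) = 1$, the reindexing $P_k := B_{kl \bmod (2l+1)}$ enumerates every $B_i$ and has consecutive differences in $[-2l\delta, \delta]$ with $P_0 = P_{2l+1} = 0$. Bounding $P_k$ simultaneously from the left ($P_k \leq k\delta$ directly) and from the right ($P_k \leq 2l\delta(2l+1-k)$ using $P_{2l+1} = 0$), and symmetrically for the lower bound, yields $|P_k| \leq 2l\delta$, hence $|B_i| \leq 2l\delta$ for every $i$. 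Lifting to $\RR$ gives $\tilde{x}_i = \tilde{x}_0 + \tfrac{i}{2l+1} + B_i$, so identifying $x_i$ with the vertex $x_0 + \tfrac{i}{2l+1}$ of the regular polygon based at $x_0$ moves each point by $|B_i| \leq 2l\delta < 4l\delta$ for $l \geq 1$; thus $\{x_0, \ldots, x_{2l}\}$ is the required $(4l\delta, 2l+1)$-regular subset. The hard part of the argument is exactly the passage from the per-edge bound $U_i \in [-2l\delta, \delta]$ to the uniform cumulative estimate $|B_i| \leq 2l\delta$, since the naive bound $|B_i| \leq i \cdot \max_j |b_j|$ is far too weak and the tight estimate genuinely requires both the zero-sum identity $\sum U_i = 0$ and the coprimality $\gcd(l, 2l+1) = 1$ to reorganize the partial sums into a single short cycle.
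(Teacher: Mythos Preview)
Your proof is correct. Part (a) is essentially the paper's argument.

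For part (b), your route is correct but considerably more elaborate than the paper's. The paper obtains a cyclic homomorphism $f\colon\cnkdir{2l+1}{l}\to\vrdir{X}{r}$ directly from Proposition~\ref{prop:cychomoall} (no dismantling, no explicit floor-function map, no separate injectivity argument needed). With $x_i=f(i)$, the key observation is the single-arc lower bound
\[
\ddist(x_i,x_{i+1})=1-\ddist(x_{i+1},x_{i+l+1})-\ddist(x_{i+l+1},x_i)>1-2r=\tfrac{1}{2l+1}-2\delta,
\]
using that the three arcs tile $S^1$ and the last two correspond to length-$l$ edges of $\cnkdir{2l+1}{l}$. In your notation this is $b_j>-2\delta$, a one-sided bound on each individual gap. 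Summing $j$ of these and then taking the complement immediately gives $\bigl|\ddist(x_0,x_j)-\tfrac{j}{2l+1}\bigr|<4l\delta$, with no need for the $U_i$, the reindexing by $kl\bmod(2l+1)$, or the coprimality of $l$ and $2l+1$. The lower bound $b_j>-2\delta$ also yields distinctness of the $x_i$ for free, making your injectivity construction unnecessary.

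What your approach buys is a sharper constant: you obtain $|B_i|\le 2l\delta$ rather than $4l\delta$. This is genuinely tighter, but the lemma only asserts $4l\delta$, and the extra machinery (dismantling, the floor map, the $P_k$ walk) is a high price for a constant that is never used. The paper's three-arc decomposition is the cleaner idea to remember here.
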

\begin{proof}
For (a) let $\{x_0,\ldots,x_{2l}\}\subset X$ be the $(\frac12\delta,2l+1)$-regular subset. We can assume $x_i\in (\frac{i}{2l+1}-\frac12\delta,\frac{i}{2l+1}+\frac12\delta)_{S^1}$. Since $\delta<\frac{1}{2l+1}$ we have $x_0\prec x_1\prec\cdots\prec x_{2l}$ cyclically ordered in $S^1$ and in $\vrdir{X}{r}$. We have
$$\ddist(x_i,x_{i+l})<\tfrac{l}{2l+1}+2\cdot\tfrac12\delta=r$$
and hence a cyclic homomorphism $\cnkdir{2l+1}{l}\incl \vrdir{X}{r}$.

To prove (b) suppose $\wfc{X}{r}\geq\frac{l}{2l+1}$. By Proposition~\ref{prop:cychomoall} there is a directed homomorphism $f\colon \cnkdir{2l+1}{l}\to\vrdir{X}{r}$. Denote $x_i=f(i)$. For every $i=0,\ldots,2l$ we get
$$\ddist(x_i,x_{i+1})=1-\ddist(x_{i+1},x_{i+l+1})-\ddist(x_{i+l+1},x_i)>1-2r=\tfrac{1}{2l+1}-2\delta.$$
It follows that for $j=1,\ldots,2l$ we have
$$\ddist(x_i,x_{i+j})> \tfrac{j}{2l+1}-2j\delta\geq \tfrac{j}{2l+1}-4l\delta$$
and in turn
$$\ddist(x_i,x_{i+j})=1-\ddist(x_{i+j},x_i)< 1-\tfrac{2l+1-j}{2l+1}+4l\delta=\tfrac{j}{2l+1}+4l\delta.$$
It follows that each $x_j$ lies in distance less than $4l\delta$ from the $j$-th vertex of the regular $(2l+1)$-gon with $x_0$ as a vertex. 
\end{proof}

Let $R_m(\varepsilon)$ be the random variable which counts the number of steps until $\nicex_n$ contains a $(\varepsilon,m)$-regular subset. In Appendix~\ref{sect:appendixB} we show that for every fixed $m$ 
\begin{equation}
\label{eq:expectR}
\expect[R_m(\varepsilon)]=\Theta(\varepsilon^{-\frac{m-1}{m}})\quad \mathrm{as}\ \varepsilon\to 0.
\end{equation}
Here we only give a heuristic explanation using our previous urn model with $K=\Theta(\varepsilon^{-1})$ urns identified with arcs of length $\Theta(\varepsilon)$. Divide the urns  into $K/m$ groups of size $m$, each group consisting of arcs centered approximately around the vertices of a regular $m$-gon. Then the event that $\nicex_n$ has an $(\varepsilon,m)$-regular subset coincides with the event that every urn in some group contains a ball. This can be correlated with the generalized birthday paradox, where we require one urn to contain $m$ balls (the case $m=2$ is the classical birthday paradox). The expected waiting time for this to happen is $\Theta(K^{\frac{m-1}{m}})$ as $K\to\infty$; see \cite[Theorem 2]{KlamkinNewman1967}.

This proves the first claim of Theorem~\ref{thm:thresholds} since by Lemma~\ref{lem:almostregular} we have
$$R_{2l+1}(4l\delta)\leq N(r)\leq R_{2l+1}(\tfrac12\delta).$$
The proof of Theorem~\ref{thm:thresholds} is now complete.

\section{Vietoris-Rips complexes for subsets of $S^1$}
\label{sect:s1}

The definition of the Vietoris--Rips complex $\vrc{X}{r}$ makes sense for an arbitrary metric space $X$, not necessarily finite nor discrete. In \cite{Hausmann1995}, Hausmann studied the case when $X$ is a closed Riemannian manifold. In this section we show that for an arbitrary subset $X\subseteq S^1$ and $r>0$, the complex $\vrc{X}{r}$ has the homotopy type of an odd-dimensional sphere or a wedge of even-dimensional spheres. We will also study the complexes $\vrcless{S^1}{r}$ and $\vrcleq{S^1}{r}$ in more detail.

For an arbitrary metric space $X$ the geometric realization of $\vrc{X}{r}$ is given the topology of a CW-complex, that is the weak topology with respect to finite-dimensional skeleta, or equivalently, the weak topology with respect to subcomplexes induced by finite subsets of $X$. Formally, let $F(X)$ be the poset of all finite subsets of $X$ ordered by inclusion. Then for each $r$ we have a functor $\vrc{-}{r}: F(X)\to Top$ and
\begin{equation*}
\label{eq:colimit}
\vrc{X}{r} = \colim_{Y\in F(X)}\vrc{Y}{r} \htpyequiv \hocolim_{Y\in F(X)} \vrc{Y}{r},
\end{equation*}
where the last equivalence is a consequence of the fact that all maps $\vrc{Y}{r}\incl\vrc{Y'}{r}$ for $Y\subseteq Y'$ are inclusions of closed subcomplexes, hence cofibrations. See \cite[Section 3]{Hocolim} for the statements of all diagram comparison theorems used in this section.

For a finite subset $Y_0\subseteq X$ let $F(X;Y_0)$ be the subposet of $F(X)$ consisting of all sets which contain $Y_0$. Since this poset is cofinal in $F(X)$, we also have
\begin{equation*}
\label{eq:colimit2}
\vrc{X}{r} = \colim_{Y\in F(X;Y_0)}\vrc{Y}{r} \htpyequiv \hocolim_{Y\in F(X;Y_0)} \vrc{Y}{r}.
\end{equation*}

For an arbitrary subset $\emptyset\neq X\subseteq S^1$ and $0<r<\frac12$ we define
\begin{equation}
\label{eq:wfsubset}
\begin{aligned}
\wfcless{X}{r}&=\sup\{\wfcless{Y}{r}~:~Y\subseteq X, |Y|<\infty\},\\
\wfcleq{X}{r}&=\sup\{\wfcleq{Y}{r}~:~Y\subseteq X, |Y|<\infty\}.
\end{aligned}
\end{equation}
The supremum in \eqref{eq:wfsubset} need not be attained when $X$ is infinite. When $X$ is finite then $\wfc{X}{r}$ agrees with our previous definition of this symbol since the supremum is attained by $Y=X$. The following proposition shows that in the generic case, $\vrc{X}{r}$ has the homotopy type of an odd-dimensional sphere.

\begin{proposition}
\label{prop:subsetsofs1odd}
Suppose that $\emptyset\neq X\subseteq S^1$ and $0<r<\frac12$. Either of the two conditions
\begin{itemize}
\item[(1)] $\frac{l}{2l+1}<\wfc{X}{r}<\frac{l+1}{2l+3}$, or
\item[(2)] $\wfc{X}{r}=\frac{l+1}{2l+3}$ and the supremum is not attained,
\end{itemize}
for some $l=0,1,\ldots$, implies that $\vrc{X}{r}\htpyequiv S^{2l+1}$.

Moreover, if $r'\geq r$ is another value of the distance parameter for which (1) or (2) hold with the same $l$, then the inclusion $\vrc{X}{r}\incl\vrc{X}{r'}$ is a homotopy equivalence.
\end{proposition}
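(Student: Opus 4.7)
The plan is to leverage the hocolim representation $\vrc{X}{r} \simeq \hocolim_{Y \in F(X; Y_0)} \vrc{Y}{r}$ and to find a sufficiently large $Y_0 \in F(X)$ such that the diagram restricted to $F(X;Y_0)$ consists entirely of homotopy equivalences between spaces of the homotopy type $S^{2l+1}$. Once such a $Y_0$ is in hand, the filtered hocolim of homotopy equivalences between CW-complexes is homotopy equivalent to any of its entries, yielding $\vrc{X}{r} \simeq S^{2l+1}$.

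First I would pick $Y_0$. By the definition of $\wfc{X}{r}$ in \eqref{eq:wfsubset}, and because both $\frac{l}{2l+1}<\wfc{X}{r}$ in case (1) and $\frac{l}{2l+1}<\frac{l+1}{2l+3}=\wfc{X}{r}$ in case (2), there exists a finite $Y_0\subseteq X$ with $\wfc{Y_0}{r}>\frac{l}{2l+1}$. For any finite $Y$ with $Y_0\subseteq Y\subseteq X$, the subgraph inclusion $\vrdir{Y_0}{r}\hookrightarrow \vrdir{Y}{r}$ is a cyclic homomorphism (Lemma~\ref{lem:compose}(c)), so by Proposition~\ref{prop:wfbasic}(b),
\begin{equation*}
\tfrac{l}{2l+1}<\wfc{Y_0}{r}\leq\wfc{Y}{r}.
\end{equation*}
For the upper bound in case (1) we use $\wfc{Y}{r}\leq\wfc{X}{r}<\frac{l+1}{2l+3}$ directly from \eqref{eq:wfsubset}. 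In case (2), the assumption that the supremum is not attained means $\wfc{Y}{r}<\frac{l+1}{2l+3}$ for every finite $Y\subseteq X$. Thus in both cases all $Y\in F(X;Y_0)$ satisfy $\frac{l}{2l+1}<\wfc{Y}{r}<\frac{l+1}{2l+3}$.

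Now for each inclusion $Y\subseteq Y'$ in $F(X;Y_0)$, Proposition~\ref{prop:inducedhtpyequiv} applied to the cyclic homomorphism $\vrdir{Y}{r}\hookrightarrow\vrdir{Y'}{r}$ implies that $\vrc{Y}{r}\hookrightarrow\vrc{Y'}{r}$ is a homotopy equivalence, and each $\vrc{Y}{r}\simeq S^{2l+1}$ by Corollary~\ref{cor:vrchtpy}. Because these maps are cofibrations between CW complexes that are pointwise homotopy equivalent to $S^{2l+1}$, standard homotopy colimit comparison (for instance the fact that a natural transformation of diagrams which is a pointwise weak equivalence induces a weak equivalence of hocolims, applied to the constant diagram with value $\vrc{Y_0}{r}$) gives
\begin{equation*}
\vrc{X}{r}\simeq\hocolim_{Y\in F(X;Y_0)}\vrc{Y}{r}\simeq \vrc{Y_0}{r}\simeq S^{2l+1}.
\end{equation*}

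For the moreover clause, suppose $r'\geq r$ and both $(X,r)$ and $(X,r')$ satisfy (1) or (2) with the same $l$. Since the winding fraction is monotone in the parameter, I can choose a single finite $Y_0\subseteq X$ such that $\wfc{Y_0}{r}>\frac{l}{2l+1}$ (and hence $\wfc{Y_0}{r'}>\frac{l}{2l+1}$ as well). Then for every $Y\in F(X;Y_0)$ the square
\begin{equation*}
\xymatrix{
\vrc{Y}{r}\ar@{^{(}->}[r]\ar@{^{(}->}[d] & \vrc{Y}{r'}\ar@{^{(}->}[d]\\
\vrc{Y'}{r}\ar@{^{(}->}[r] & \vrc{Y'}{r'}
}
\end{equation*}
has all winding fractions in the generic interval $(\frac{l}{2l+1},\frac{l+1}{2l+3})$ by the same argument as above, so all four maps are homotopy equivalences by Proposition~\ref{prop:inducedhtpyequiv}. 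Passing to the hocolim over $F(X;Y_0)$ of the horizontal natural transformation $\vrc{-}{r}\Rightarrow\vrc{-}{r'}$ gives the desired homotopy equivalence $\vrc{X}{r}\hookrightarrow\vrc{X}{r'}$.

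The main obstacle is the delicate role of the ``supremum not attained'' hypothesis in case (2): it is exactly what rules out finite approximations whose winding fraction equals the singular value $\frac{l+1}{2l+3}$, which would cause $\vrc{Y}{r}$ to be a wedge of even-dimensional spheres rather than $S^{2l+1}$ and would destroy the uniform winding-fraction control needed to apply Proposition~\ref{prop:inducedhtpyequiv} throughout the cofinal system. Everything else is careful bookkeeping with the hocolim/colim comparison.
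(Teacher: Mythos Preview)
Your proof is correct and follows essentially the same approach as the paper: choose a finite $Y_0$ so that every $Y\in F(X;Y_0)$ has winding fraction in the open interval $(\tfrac{l}{2l+1},\tfrac{l+1}{2l+3})$, apply Proposition~\ref{prop:inducedhtpyequiv} to see that the diagram $\vrc{-}{r}$ over $F(X;Y_0)$ consists of homotopy equivalences between copies of $S^{2l+1}$, and then pass to the hocolim (and to the levelwise natural transformation for the moreover clause). Your write-up simply makes explicit the justification for the existence of $Y_0$ and the hocolim comparison that the paper leaves implicit.
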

\begin{proof}
Either of the two conditions (1), (2) implies there is a finite subset $Y_0\subseteq X$ such that for every finite subset $Y$ with $Y_0\subseteq Y\subseteq X$, we have $\frac{l}{2l+1}<\wfc{Y}{r}<\frac{l+1}{2l+3}$. By Proposition~\ref{prop:inducedhtpyequiv} all maps in the diagram $\vrc{-}{r}: F(X;Y_0)\to Top$ are homotopy equivalences between spaces homotopy equivalent to $S^{2l+1}$, and therefore
$$
\vrc{X}{r}\htpyequiv \hocolim_{Y\in F(X,Y_0)} \vrc{Y}{r}\htpyequiv S^{2l+1}.
$$
Furthermore we have $\frac{l}{2l+1}<\wfc{Y}{r}\leq\wfc{Y}{r'}<\frac{l+1}{2l+3}$, hence the same is true for $\vrc{X}{r'}$. The maps $\vrc{Y}{r}\to\vrc{Y}{r'}$ now define a natural transformation of diagrams $\vrc{-}{r}\to\vrc{-}{r'}$ which is a levelwise homotopy equivalence by Proposition~\ref{prop:inducedhtpyequiv}. It follows that the induced map of (homotopy) colimits is a homotopy equivalence.
\end{proof}
\begin{remark}
\label{rem:finiteapproximation}
The same argument shows that under the assumptions of the last proposition the map $\vrc{Y_0}{r}\incl\vrc{X}{r}$ is a homotopy equivalence whenever $Y_0\subseteq X$ is a finite set with $\wfc{Y_0}{r}>\frac{l}{2l+1}$.
\end{remark}

As the next lemma shows, the winding fractions behave in the expected way for dense subsets of the circle.
\begin{lemma}
\label{lem:wfdenseA}
If $X$ is dense in $S^1$ and $0<r<\frac12$ then $\wfcless{X}{r}=\wfcleq{X}{r}=r$. In the case of $\wf_<$ the supremum is not attained.
\end{lemma}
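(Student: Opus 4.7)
The plan is to show both upper and lower bounds, using Proposition~\ref{prop:wfbasic}(c) for the upper bound and Proposition~\ref{prop:approachinghighwf} together with density for the lower bound, and then handle the non-attainment claim with the strict inequality from Proposition~\ref{prop:wfbasic}(c).

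For the upper bound, for any finite $Y\subseteq X$ Proposition~\ref{prop:wfbasic}(c) gives $\wfcleq{Y}{r}\leq r$ and $\wfcless{Y}{r}<r$, hence $\wfcleq{Y}{r}\leq r$. Taking the supremum over all finite $Y\subseteq X$ in the definition \eqref{eq:wfsubset} yields $\wfcless{X}{r}\leq\wfcleq{X}{r}\leq r$.

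For the matching lower bound, I will use density to produce, for each $\varepsilon>0$, a finite $\varepsilon$-covering $Y\subseteq X$ of $S^1$. Concretely: take an equally spaced finite subset $\{0,\tfrac{1}{N},\ldots,\tfrac{N-1}{N}\}$ of $S^1$ with $N$ large enough that $\tfrac{1}{2N}<\tfrac{\varepsilon}{2}$, and replace each of its points by a point of $X$ within distance $\tfrac{\varepsilon}{2}$ (possible by density). The resulting finite subset $Y\subseteq X$ is an $\varepsilon$-covering of $S^1$, so by Proposition~\ref{prop:approachinghighwf} we have $\wfcless{Y}{r}>r-2\varepsilon$. Since $\wfcless{Y}{r}\leq\wfcleq{Y}{r}$, taking $\varepsilon\to 0$ gives $\wfcless{X}{r}\geq r$ and $\wfcleq{X}{r}\geq r$, which combined with the upper bounds establishes $\wfcless{X}{r}=\wfcleq{X}{r}=r$.

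Finally, for non-attainment in the strict case, suppose toward contradiction that the supremum defining $\wfcless{X}{r}$ were attained at some finite $Y\subseteq X$. Then $\wfcless{Y}{r}=r$, contradicting the strict inequality $\wfcless{Y}{r}<r$ from Proposition~\ref{prop:wfbasic}(c). I do not anticipate any real obstacle in this argument, since all the work is already done in Proposition~\ref{prop:approachinghighwf}; the only minor care is in verifying that the constructed finite subset $Y$ is genuinely an $\varepsilon$-covering.
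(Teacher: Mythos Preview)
Your proposal is correct and follows essentially the same approach as the paper's proof: the upper bound and non-attainment both come from Proposition~\ref{prop:wfbasic}(c), and the lower bound comes from producing finite $\varepsilon$-coverings inside $X$ and invoking Proposition~\ref{prop:approachinghighwf}. The only difference is that you spell out explicitly how to manufacture the finite $\varepsilon$-covering from density, which the paper leaves implicit.
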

\begin{proof}
For every $\varepsilon>0$ the set $X$ contains a finite $\varepsilon$-covering of $S^1$. Proposition~\ref{prop:approachinghighwf} now gives $\wfc{X}{r}\geq r$. The reverse inequality and the second statement of the lemma follow from Proposition~\ref{prop:wfbasic}(c).
\end{proof}

We can now give a complete description of the homotopy types of $\vrcless{S^1}{r}$ for arbitrary $r$.

\begin{theorem}
\label{thm:vrlessfull}
If $X$ is dense in $S^1$ (in particular when $X=S^1$) and $0<r<\frac12$, then we have
$$\vrcless{X}{r}\htpyequiv S^{2l+1} \quad \mathrm{for}\ \tfrac{l}{2l+1}<r\leq\tfrac{l+1}{2l+3}, \ l=0,1,\ldots.$$
Moreover, if $\frac{l}{2l+1}<r\leq r'\leq\frac{l+1}{2l+3}$ then the inclusion $\vrcless{X}{r}\incl\vrcless{X}{r'}$ is a homotopy equivalence.
\end{theorem}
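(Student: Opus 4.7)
The plan is to deduce the statement directly from Proposition~\ref{prop:subsetsofs1odd} together with Lemma~\ref{lem:wfdenseA}; all of the real work has already been done in earlier sections, so this is essentially a bookkeeping argument.

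First I would invoke Lemma~\ref{lem:wfdenseA} to obtain $\wfcless{X}{r}=r$ for every admissible $r$, and moreover that this supremum is not attained. This identifies the winding fraction of $X$ at every parameter value, which is precisely the input needed by Proposition~\ref{prop:subsetsofs1odd}.

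Next I would split on whether $r$ lies strictly inside the interval $\bigl(\tfrac{l}{2l+1},\tfrac{l+1}{2l+3}\bigr)$ or is equal to the right endpoint. In the interior case, $\tfrac{l}{2l+1}<\wfcless{X}{r}=r<\tfrac{l+1}{2l+3}$ is hypothesis (1) of Proposition~\ref{prop:subsetsofs1odd}, so $\vrcless{X}{r}\htpyequiv S^{2l+1}$. In the endpoint case $r=\tfrac{l+1}{2l+3}$, Lemma~\ref{lem:wfdenseA} gives $\wfcless{X}{r}=\tfrac{l+1}{2l+3}$ with the supremum not attained, which is exactly hypothesis (2) of Proposition~\ref{prop:subsetsofs1odd}, again yielding $\vrcless{X}{r}\htpyequiv S^{2l+1}$.

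For the second claim, given $\tfrac{l}{2l+1}<r\leq r'\leq\tfrac{l+1}{2l+3}$, both $r$ and $r'$ fall into one of the two cases above with the same value of $l$, so the ``Moreover'' clause of Proposition~\ref{prop:subsetsofs1odd} directly yields that the inclusion $\vrcless{X}{r}\incl\vrcless{X}{r'}$ is a homotopy equivalence. The only subtle point — and arguably the only one requiring care — is making sure the endpoint $r'=\tfrac{l+1}{2l+3}$ is handled by condition (2) rather than (1); this works precisely because we use the strict Vietoris--Rips complex $\vrcless{\cdot}{\cdot}$, so that strict inequality $\wfcless{Y}{r'}<r'$ is automatic from Proposition~\ref{prop:wfbasic}(c), forcing the supremum in \eqref{eq:wfsubset} not to be attained. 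No new geometric ideas are needed beyond this observation.
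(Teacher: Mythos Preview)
Your proposal is correct and follows essentially the same approach as the paper: invoke Lemma~\ref{lem:wfdenseA} to get $\wfcless{X}{r}=r$ with the supremum not attained, then apply Proposition~\ref{prop:subsetsofs1odd}. The paper compresses your case split into the single phrase ``meaning that either (1) or (2) in Proposition~\ref{prop:subsetsofs1odd} is satisfied,'' but the content is identical; your closing remark about Proposition~\ref{prop:wfbasic}(c) is already absorbed into Lemma~\ref{lem:wfdenseA}, so it is redundant rather than additional.
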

\begin{proof}
By Lemma~\ref{lem:wfdenseA} we have $\wfcless{X}{r}=r$ and the supremum is not attained, meaning that either (1) or (2) in Proposition~\ref{prop:subsetsofs1odd} is satisfied. 
\end{proof}

\medskip
Proposition~\ref{prop:subsetsofs1odd} describes the homotopy types of $\vrc{X}{r}$ in all generic situations. The only singular cases it does not cover occur when $\wf(X;r)$ is of the form $l/(2l+1)$ and this value is in fact attained by some finite subset $Y_0\subseteq X$. We deal with this in the next two statements.

\begin{proposition}
\label{prop:vrc-even}
Suppose that $\emptyset\neq X\subseteq S^1$ and $0<r<\frac12$. If $\wfc{X}{r}=\frac{l}{2l+1}$ for some $l=0,1,\ldots$ and the supremum in the definition of $\wfc{X}{r}$ is attained, then $\vrc{X}{r}$ is homotopy equivalent to a wedge sum of spheres of dimension $2l$.
\end{proposition}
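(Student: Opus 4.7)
The plan is to reduce to the finite approximations via the colimit description $\vrc{X}{r}=\colim_{Y\in F(X;Y_0)}\vrc{Y}{r}$ and control the homotopy type through their already-known structure. Since the supremum defining $\wfc{X}{r}$ is attained, I fix a finite $Y_0\subseteq X$ with $\wfc{Y_0}{r}=\frac{l}{2l+1}$. For any finite $Y$ with $Y_0\subseteq Y\subseteq X$ the subgraph inclusion $\vrdir{Y_0}{r}\incl\vrdir{Y}{r}$ is a cyclic homomorphism, so Proposition~\ref{prop:wfbasic}(b) combined with the bound $\wfc{Y}{r}\leq\wfc{X}{r}=\frac{l}{2l+1}$ pins down $\wfc{Y}{r}=\frac{l}{2l+1}$, and Corollary~\ref{cor:vrchtpy} tells me each $\vrc{Y}{r}$ is a finite wedge $\bigvee^{m_Y-1}S^{2l}$, where $\vrdir{Y}{r}$ dismantles to $\cnkdir{(2l+1)m_Y}{lm_Y}$.

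The degenerate case $l=0$ is then immediate: every $\vrdir{Y}{r}$ is acyclic, each $\vrc{Y}{r}$ is a finite discrete set of vertices, and so is their colimit, which is a wedge of $0$-spheres. Assume henceforth $l\geq 1$. Since $F(X;Y_0)$ is filtered and the structure maps are inclusions of closed subcomplexes (hence cofibrations), the ordinary colimit coincides with the homotopy colimit. Each term is $(2l-1)$-connected with homology free and concentrated in degrees $0$ and $2l$, so the colimit is $(2l-1)$-connected, $H_i(\vrc{X}{r})=0$ for $i\notin\{0,2l\}$, and $H_{2l}(\vrc{X}{r})=\colim H_{2l}(\vrc{Y}{r})$. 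Once I know this colimit is free abelian, Hurewicz identifies $\pi_{2l}(\vrc{X}{r})\iso H_{2l}(\vrc{X}{r})$; a choice of basis is realized by a map $\bigvee_\alpha S^{2l}\to\vrc{X}{r}$ which is a homology isomorphism between simply connected CW complexes, and therefore a homotopy equivalence by Whitehead.

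The main obstacle will be proving that $H_{2l}(\vrc{X}{r})$ is indeed free abelian---mere injectivity of the transition maps does not suffice, as illustrated by $\mathbb{Z}[\tfrac12]=\colim\mathbb{Z}$. My strategy would be to show that each elementary extension $\vrc{Y}{r}\incl\vrc{Y\cup\{x\}}{r}$ with $x\in X\setminus Y$ induces a \emph{split} injection on $H_{2l}$, so that along any cofinal chain the colimit decomposes as a direct sum of free groups. Concretely, adding the vertex $x$ either leaves the dismantled core $\cnkdir{(2l+1)m}{lm}$ unchanged (in which case the inclusion is a homotopy equivalence by Corollary~\ref{cor:dismantlinghtpy}) or enlarges it to $\cnkdir{(2l+1)m'}{lm'}$ with $m'>m$. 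In the latter case one needs explicit generating $2l$-cycles of $\cl(\cnk{(2l+1)m'}{lm'})$ together with a combinatorial chain-level retraction onto the cycles inherited from $\cl(\cnk{(2l+1)m}{lm})$. This explicit description of the $2l$-cycles in the singular case, which the paper defers to Section~\ref{sect:singular}, is exactly the technical heart of the argument.
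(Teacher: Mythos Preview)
Your overall framework is exactly the paper's: reduce to the filtered (homotopy) colimit over $F(X;Y_0)$, observe each stage is a finite wedge of $2l$-spheres, conclude that $\vrc{X}{r}$ is $(2l-1)$-connected with homology concentrated in degree $2l$, and deduce $\vrc{X}{r}\htpyequiv\bigvee S^{2l}$ once $H_{2l}(\vrc{X}{r})$ is known to be free abelian. You also correctly locate the crux of the argument in the freeness of $H_{2l}$ and anticipate that it rests on the Section~\ref{sect:singular} machinery.

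The gap is in your proposed route to freeness. Showing that each elementary extension induces a split injection and then passing to ``any cofinal chain'' fails when $X$ is uncountable---for instance $X=S^1$, the case needed for Theorem~\ref{thm:vrcleqfull}---since $F(X;Y_0)$ then admits no countable cofinal subset, let alone a cofinal chain. Even granting split injectivity of every transition map (which you have not established), a filtered colimit of free abelian groups along split injections over an uncountable directed set is not obviously free; some further argument is required, and you have not indicated one.

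The paper proceeds differently. It introduces \emph{cross-polytopal} classes in $\redhom_{2l}(\vrc{X}{r})$---images of the fundamental class of the boundary of a $(2l{+}1)$-cross-polytope under an injective cyclic homomorphism---and shows they generate. The classification in Proposition~\ref{prop:crosspoly-classify} says that in each finite stage $\redhom_{2l}(\vrc{Y}{r})\cong\ZZ^{d-1}$ the cross-polytopal classes are exactly $\pm e_i$ and $e_i-e_j$. Zorn's lemma then produces a maximal independent set $B$ of cross-polytopal classes in the colimit; maximality together with an elementary linear-algebra lemma about the vectors $\{e_i,\,e_i-e_j\}\subset\ZZ^{d-1}$ (Proposition~\ref{prop:appendixA}: if a nonzero multiple of one such vector lies in the span of some others, then the vector itself does) forces $B$ to generate. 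This purity-style argument works uniformly for arbitrary $X$ and bypasses chains and explicit splittings entirely.
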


\begin{theorem}
\label{thm:vrcleqfull}
For $0\leq r<\frac12$ we have a homotopy equivalence
$$
\vrcleq{S^1}{r}\htpyequiv \begin{cases}
S^{2l+1} & \mathrm{if}\ \frac{l}{2l+1}<r<\frac{l+1}{2l+3},\ l=0,1,\ldots, \\
\bigvee^{\mathfrak{c}} S^{2l} & \mathrm{if}\ r=\frac{l}{2l+1},
\end{cases}
$$
where $\mathfrak{c}$ is the cardinality of the continuum. Moreover, if $\frac{l}{2l+1}<r\leq r'<\frac{l+1}{2l+3}$ then the inclusion $\vrcleq{S^1}{r}\incl\vrcleq{S^1}{r'}$ is a homotopy equivalence.
\end{theorem}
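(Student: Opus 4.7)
My plan is to split the argument into a generic case $r\in(\tfrac{l}{2l+1},\tfrac{l+1}{2l+3})$ and a singular case $r=\tfrac{l}{2l+1}$, handling the second with more care. In the generic case I would first invoke Lemma~\ref{lem:wfdenseA} to obtain $\wfcleq{S^1}{r}=r$, which lies strictly between $\tfrac{l}{2l+1}$ and $\tfrac{l+1}{2l+3}$. Condition~(1) of Proposition~\ref{prop:subsetsofs1odd} then applies at once and yields $\vrcleq{S^1}{r}\htpyequiv S^{2l+1}$; the moreover clause about inclusions being homotopy equivalences falls out of the second part of Proposition~\ref{prop:subsetsofs1odd}, since both $r$ and $r'$ satisfy (1) with the same $l$.

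For the singular case, Lemma~\ref{lem:wfdenseA} again gives $\wfcleq{S^1}{r}=r=\tfrac{l}{2l+1}$, and I would observe that the supremum is attained---for instance by the regular $(2l+1)$-gon $Y_0=\{\tfrac{i}{2l+1}:0\le i\le 2l\}$, whose directed Vietoris--Rips graph is $\cnkdir{2l+1}{l}$ of winding fraction $\tfrac{l}{2l+1}$ by Proposition~\ref{prop:wfbasic}(d). Proposition~\ref{prop:vrc-even} then guarantees that $\vrcleq{S^1}{r}$ is a wedge of $2l$-spheres, so it remains to identify the cardinality of that wedge as $\mathfrak{c}$.

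The count would come from matching upper and lower bounds on $\mathrm{rank}\,H_{2l}(\vrcleq{S^1}{r})$. The upper bound is immediate: the simplicial complex $\vrcleq{S^1}{r}$ has vertex set of cardinality $\mathfrak{c}$ and hence at most $\mathfrak{c}$ simplices, forcing its homology to have rank at most $\mathfrak{c}$. For the lower bound I would exploit the filtered colimit identification
\begin{equation*}
H_{2l}(\vrcleq{S^1}{r})\iso\colim_{Y\in F(S^1;Y_0)} H_{2l}(\vrcleq{Y}{r})
\end{equation*}
combined with the following explicit observation: for each integer $d\ge 1$ and each sorted tuple $0<\theta_1<\cdots<\theta_{d-1}<\tfrac{1}{2l+1}$, the set $Y_\theta=Y_0\cup\bigcup_{j=1}^{d-1}(Y_0+\theta_j)$ of $d$ interlocked regular $(2l+1)$-gons satisfies $\vrleqdir{Y_\theta}{r}\iso\cnkdir{d(2l+1)}{dl}$, and Corollary~\ref{cor:vrchtpy} then gives $\vrcleq{Y_\theta}{r}\htpyequiv\bigvee^{d-1}S^{2l}$. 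The main obstacle is to show that by ranging the phase tuples $\theta$ over a suitably chosen uncountable family one obtains $\ZZ$-linearly independent classes that remain independent throughout the filtered system, so that the colimit attains rank $\mathfrak{c}$. Verifying this independence requires tracking the induced maps on $H_{2l}$ between the various $\cnk{n}{k}$-cores under inclusion, which is exactly the detailed combinatorial analysis of dismantling at the singular winding fraction $\tfrac{l}{2l+1}$ developed in Section~\ref{sect:singular}.
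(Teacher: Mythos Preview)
Your treatment of the generic case and the reduction of the singular case via Proposition~\ref{prop:vrc-even} are exactly what the paper does, and the upper bound on the rank is fine.

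The gap is in the lower bound. You correctly identify it as ``the main obstacle'' but do not resolve it, and the route you sketch---tracking induced maps on $H_{2l}$ between dismantling cores throughout the filtered system---is both incomplete and unnecessarily heavy. The paper avoids this entirely with a direct chain-level argument. Restricting to $d=2$, for each $t\in(0,\tfrac{1}{2l+1})$ take $Y_t=\{\tfrac{i}{2l+1},\,t+\tfrac{i}{2l+1}:i=0,\ldots,2l\}$, so $\vrleqdir{Y_t}{\tfrac{l}{2l+1}}\iso\cnkdir{2(2l+1)}{2l}$ is a cross-polytope boundary, giving a class $\alpha_t\in\redhom_{2l}\big(\vrcleq{S^1}{\tfrac{l}{2l+1}}\big)$. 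The key observation is that the simplex $\beta_t=[t,\,t+\tfrac{1}{2l+1},\ldots,\,t+\tfrac{2l}{2l+1}]$ is a \emph{maximal} face of $\vrcleq{S^1}{\tfrac{l}{2l+1}}$, appears in the support of the cycle $\alpha_t$, and appears in no $\alpha_s$ for $s\neq t$. The dual cochains $\beta_t^\vee$ are therefore cocycles witnessing linear independence of the $\alpha_t$ directly---no colimit bookkeeping, no Section~\ref{sect:singular} machinery.

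So the missing idea is simply: exhibit, for each class, a maximal simplex unique to its support. This replaces your proposed analysis of induced maps with a one-line cochain argument.
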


We delay the proofs of Proposition~\ref{prop:vrc-even} and Theorem~\ref{thm:vrcleqfull} until Section~\ref{sect:singular}. Note that Theorems~\ref{thm:vrlessfull} and \ref{thm:vrcleqfull} together provide a complete description of the homotopy types of $\vrc{S^1}{r}$ for arbitrary $r$. They also give the persistent homology of $\vrc{S^1}{r}$, where we refer the reader to \cite{ChazalDeSilvaOudot2013} for information on the persistent homology of Vietoris--Rips complexes.

\begin{corollary}
\label{cor:vrcpersistenthomology}
The persistent homology of $\vrcless{S^1}{r}$ contains a single interval $(\frac{l}{2l+1},\frac{l+1}{2l+3}]$ in each homological dimension $2l+1$, and the persistent homology of $\vrcleq{S^1}{r}$ contains a single interval $(\frac{l}{2l+1},\frac{l+1}{2l+3})$ in each homological dimension $2l+1$.
\end{corollary}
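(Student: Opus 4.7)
The plan is to read off the persistence intervals directly from the homotopy-type classifications in Theorems~\ref{thm:vrlessfull} and \ref{thm:vrcleqfull}, using in an essential way the fact that both theorems assert not only a pointwise homotopy equivalence with a sphere but also that the inclusion-induced maps between complexes are homotopy equivalences within each ``plateau.'' Recall that persistent homology in dimension $d$ is determined by the filtered abelian groups $\{H_d(\vrc{S^1}{r})\}_r$ together with the maps induced by inclusions $\vrc{S^1}{r}\incl\vrc{S^1}{r'}$ for $r\le r'$.

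For $\vrcless{S^1}{r}$, fix $l\ge 0$ and set $I_l=(\tfrac{l}{2l+1},\tfrac{l+1}{2l+3}]$. By Theorem~\ref{thm:vrlessfull}, $\vrcless{S^1}{r}\htpyequiv S^{2l+1}$ for every $r\in I_l$, and for all $r\le r'$ in $I_l$ the inclusion is a homotopy equivalence; hence $H_{2l+1}$ is constantly $\ZZ$ on $I_l$ with identity transition maps. For $r\le\tfrac{l}{2l+1}$ the complex $\vrcless{S^1}{r}$ is either contractible (when $l=0$) or homotopy equivalent to a sphere of dimension at most $2l-1$, so $H_{2l+1}=0$ there; this pins down the birth at $\tfrac{l}{2l+1}$ (open on the left). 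For $r'>\tfrac{l+1}{2l+3}$ the complex is homotopy equivalent to $S^{2l+3}$, which has trivial $(2l+1)$-st homology, so the inclusion-induced map necessarily kills the class; this pins down the death at $\tfrac{l+1}{2l+3}$ (closed on the right, since $\tfrac{l+1}{2l+3}\in I_l$). Together these yield the single bar $I_l$ in dimension $2l+1$, and no other dimensions contribute since every $\vrcless{S^1}{r}$ is homotopy equivalent to an odd sphere.

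For $\vrcleq{S^1}{r}$ the argument is parallel but uses Theorem~\ref{thm:vrcleqfull}, which gives $\vrcleq{S^1}{r}\htpyequiv S^{2l+1}$ only on the \emph{open} interval $(\tfrac{l}{2l+1},\tfrac{l+1}{2l+3})$, with the inclusion maps on this open interval again being homotopy equivalences. At each endpoint the complex jumps to a wedge of spheres of even dimension ($2l$ at the left endpoint, $2(l+1)$ at the right), both of which have trivial $(2l+1)$-st homology. Hence the $(2l+1)$-st persistence bar is exactly the open interval $(\tfrac{l}{2l+1},\tfrac{l+1}{2l+3})$. One must also check that the huge wedges $\bigvee^{\mathfrak{c}} S^{2l}$ appearing at the singular values $r=\tfrac{l}{2l+1}$ do not create nontrivial bars in even dimensions: the inclusions from $r$ slightly smaller than $\tfrac{l}{2l+1}$ (where the complex is $S^{2l-1}$) and to $r$ slightly larger (where it is $S^{2l+1}$) both land in spaces with vanishing $H_{2l}$, so the wedge classes are born and killed at the same parameter value and contribute only ephemeral (length-zero) intervals, which do not appear in the persistence diagram.

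The only step requiring genuine verification beyond quoting earlier results is the behavior of the transition maps across the boundary values $r=\tfrac{l+1}{2l+3}$ (and, for $\vrcleq$, also $r=\tfrac{l}{2l+1}$). These are handled purely by the dimension mismatch between the homotopy types on the two sides of the transition, so the proof is essentially a bookkeeping argument; the main conceptual load has already been carried by Theorems~\ref{thm:vrlessfull} and \ref{thm:vrcleqfull}.
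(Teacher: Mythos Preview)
Your proposal is correct and follows exactly the approach the paper intends: the paper states this corollary without proof, treating it as immediate from Theorems~\ref{thm:vrlessfull} and~\ref{thm:vrcleqfull}, and your argument spells out precisely that deduction. Your additional remark about the ephemeral even-dimensional classes in the $\vrcleq{}{}$ case is a nice clarification, though strictly speaking the corollary only asserts what happens in odd dimensions.
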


\begin{remark}
\label{rem:provehausmann}
Hausmann \cite[(3.12)]{Hausmann1995} conjectured that if $M$ is a compact Riemannian manifold then the connectivity $\conn(\vrcless{M}{r})$ is a non-decreasing function of $r$, and our results confirm this conjecture for $M=S^1$. Hausmann \cite[(3.11)]{Hausmann1995} furthermore conjectured that for $r$ sufficiently small, $\vrcless{M}{r}$ is homotopy equivalent to $\vrcless{Y_0}{r}$ for some finite subset $Y_0\subset M$. For $M=S^1$ we confirm this conjecture for all $r$, sufficiently small or otherwise.
\end{remark}

\section{Singular winding fractions}
\label{sect:singular}

In this section we return to study cyclic graphs for which $\wf(\dirg{G})=\frac{l}{2l+1}$ is a singular value. Our aim is to  describe a convenient structure in the homology group $\redhom_{2l}(\cl(G))$, which we then use to prove Proposition~\ref{prop:vrc-even} and Theorem~\ref{thm:vrcleqfull}.

We consider first a cyclic graph $\cnkdir{n}{k}$ with $\frac{k}{n}=\frac{l}{2l+1}$. Since $l$ and $2l+1$ are coprime we have $(k,n)=(dl,d(2l+1))$ for some integer $d\geq 1$. We have $d=n-2k$ and so by Theorem~\ref{thm:cyclichtpy} we can write $\cl(\cnk{d(2l+1)}{dl})\htpyequiv\bigvee^{d-1}S^{2l}$. When $(k,n)=(l,2l+1)$ the graph $\cnk{2l+1}{l}$ is a clique and $\cl(\cnk{2l+1}{l})$ is the full simplex with $2l+1$ vertices.

The next case, $d=2$ and $(k,n)=(2l,2(2l+1))$, is particularly interesting for our purposes. The non-edges of the graph $\cnk{2(2l+1)}{2l}$ are pairs of the form $\{i,i+2l+1\}$, which are the antipodal pairs in the evenly-spaced model $\cnk{2(2l+1)}{2l}=\vrleq{\{\frac{i}{2(2l+1)}~:~i=0,\ldots,4l+1\}}{\frac{l}{2l+1}}$. It follows that the clique complex $\cl(\cnk{2(2l+1)}{2l})$ is isomorphic to the standard triangulation of $S^{2l}$ as the boundary of the cross-polytope of dimension $2l+1$. We fix the $2l$-dimensional cycle in $\cl(\cnk{2(2l+1)}{2l})$:
\begin{align}
\label{eq:iota-cycle}
\iota_{2l}&=(-1)^{l(l+3)/2}\cdot([0]-[2l+1])\wedge([1]-[2l+2])\wedge\cdots\wedge([2l]-[4l+1])\nonumber\\
&=[0,2,\ldots,4l]-[1,3,\ldots,4l+1]\pm\cdots,
\end{align}
which is (up to sign) the fundamental cycle of the boundary of the cross-polytope. Here $[x_0]\wedge\dots\wedge[x_k]$ denotes the oriented simplex $[x_0, \ldots, x_k]$, and we have chosen the sign so that the oriented simplices $[0,2,\ldots,4l]$ and $[1,3,\ldots,4l+1]$ appear with coefficients $+1$ and $-1$ respectively. Indeed, in $([0]-[2l+1])\wedge\cdots\wedge([2l]-[4l+1])$ the sign on $[0,2l+2,2,\ldots,2l-2,4l,2l]$ is $(-1)^l$, and then after $l(l+1)/2$ transpositions this gives the sign $(-1)^{l(l+3)/2}$ on $[0,2,\ldots,4l]$. The argument for $[1,3,\ldots,4l+1]$ is similar. The corresponding homology class $\iota_{2l}\in \redhom_{2l}(\cl(\cnk{2(2l+1)}{2l}))=\ZZ$ is a generator. (Here and in the following we will use the same symbol to denote a (co)cycle and its (co)homology class, and sometimes also the map which induces the given class.)

\begin{definition}
\label{def:crosspoly-class}
Suppose that $\dirg{G}$ is a cyclic graph. A non-zero homology class $\alpha\in \redhom_{2l}(\cl(G))$ is called \emph{cross-polytopal} if there is a cyclic homomorphism $f\colon\cnkdir{2(2l+1)}{2l}\to \dirg{G}$ such that $\alpha=f_*(\iota_{2l})$.
\end{definition}
An immediate consequence of the definition is that the image of a cross-polytopal class under a cyclic homomorphism $\dirg{G}\to\dirg{H}$ is again cross-polytopal, unless it is zero. Note that if $f$ is not injective on the vertices then $f_*(\iota_{2l})=0$ because a homology class of degree $2l$ in a clique complex must be supported on at least $4l+2$ vertices (see for instance \cite[Lemma 5.3]{Kahle2009}).
 
Our aim is to classify all cross-polytopal homology classes for cyclic graphs. We begin with the description of a class of cyclic homomorphisms.

\begin{lemma}
\label{lem:crosspoly-homo}
Let $d\geq 1$ and $(k,n)=(dl,d(2l+1))$.
\begin{itemize}
\item[(a)] Every cyclic homomorphism $\cnkdir{2l+1}{l}\to\cnkdir{n}{k}$ is of the form $\theta_a$ for some $a=0,\ldots,n-1$, where
$$\theta_a(i)=a+di \md{n}.$$
\item[(b)] Every injective cyclic homomorphism $\cnkdir{2(2l+1)}{2l}\to\cnkdir{n}{k}$ is of the form $\alpha_{a,b}$ for some $a=0,\ldots,n-1$ and $b=a+1,\ldots,a+d-1$, where
$$\alpha_{a,b}(i)=\begin{cases}a+d\cdot\frac{i}{2} \md{n} & \mathrm{if}\ i\ \mathrm{is}\ \mathrm{even},\\b+d\cdot\frac{i-1}{2} \md{n}& \mathrm{if}\ i\ \mathrm{is}\ \mathrm{odd}.\end{cases}$$
\end{itemize}
\end{lemma}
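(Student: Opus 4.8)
The plan is to prove both parts by exploiting the rigid structure of $\cnkdir{n}{k}$ forced by a cyclic homomorphism, using heavily the degree-1 condition \eqref{eq:degree1vr} (equivalently, that the composite map to the circle, via the evenly-spaced model of $\cnkdir{n}{k}$, has winding number $1$), together with Proposition~\ref{prop:wfbasic}(d), which tells us $\wf(\cnkdir{2l+1}{l})=\wf(\cnkdir{2(2l+1)}{2l})=\frac{l}{2l+1}=\frac{k}{n}$. Since source and target have equal winding fraction lying on the singular value $\frac{l}{2l+1}$, any cyclic homomorphism is ``tight'': there is no room for the map to contract arcs, so it must move each generator forward by a fixed amount.

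For part (a), let $f\colon\cnkdir{2l+1}{l}\to\cnkdir{n}{k}$ be a cyclic homomorphism. First I would observe that $f$ is not constant (the source has a directed cycle), so by \eqref{eq:degree1vr}, $\sum_{i=0}^{2l}\ddist_n(f(i),f(i+1))=n$ when we read distances in the evenly-spaced model with $n$ points. On the other hand, for each $i$ the edge $i\to i+l$ in $\cnkdir{2l+1}{l}$ forces $\ddist_n(f(i),f(i+l))\le k$; summing the $2l+1$ such inequalities, each arc $[f(j),f(j+1)]$ is covered exactly $l$ times, giving $l\cdot n\le (2l+1)k = l\cdot n$. Hence every inequality $\ddist_n(f(i),f(i+l))\le k$ is in fact an equality, and consequently each step $\ddist_n(f(i),f(i+1))$ is constant, say equal to $c$; then $(2l+1)c = n = d(2l+1)$, so $c=d$. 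Writing $a=f(0)$ gives $f(i)=a+di$, i.e.\ $f=\theta_a$. Conversely each $\theta_a$ is visibly cyclic monotone, non-constant, and a homomorphism since $\ddist_n(\theta_a(i),\theta_a(i+s))=ds\le dl=k$ for $s\le l$.

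For part (b), let $f\colon\cnkdir{2(2l+1)}{2l}\to\cnkdir{n}{k}$ be injective and cyclic. Now I would apply the same averaging argument to the $4l+2$ edges $i\to i+2l$: each arc $[f(j),f(j+1)]$ is covered exactly $2l$ times, so $2l\cdot n\le (4l+2)\cdot 2l = 2l\cdot(2d(2l+1)) $... more carefully, $\sum_i \ddist_n(f(i),f(i+2l))\le (4l+2)k$ and the left side equals $2l\cdot n$ (total winding $1$ traversed, but now the cyclic-monotone map has degree $1$ over the double cover model — here I must be careful: \eqref{eq:degree1vr} gives $\sum_{i=0}^{4l+1}\ddist_n(f(i),f(i+1))=n$, and each arc $[f(j),f(j+1)]$ is covered $2l$ times by the arcs $[f(i),f(i+2l)]$, yielding $2l\cdot n\le(4l+2)k=(4l+2)\cdot\tfrac{2l}{2(2l+1)}n = 2l\cdot n$). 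So again all inequalities are equalities and each consecutive step $\ddist_n(f(i),f(i+1))$ is constant. But there are $4l+2$ steps summing to $n=2d(2l+1)$, forcing each step to equal $\frac{n}{4l+2}=\frac{d}{2}$ — which need not be an integer. The resolution is that the steps from even to odd indices and from odd to even indices need only have a constant \emph{sum over each pair}: revisiting the covering count, the edge $i\to i+2l$ with $i$ even lands on an odd index, so the relevant equalities pin down $\ddist_n(f(i),f(i+2))=d$ for all $i$ (stepping by $2$ in the source corresponds to the ``even'' and ``odd'' sub-$(2l+1)$-gons each mapping as in part (a)). Thus $f$ restricted to even indices is $\theta_a$ and restricted to odd indices is $\theta_b$ for some $a,b$, with $f(i)=a+d\tfrac{i}{2}$ ($i$ even), $f(i)=b+d\tfrac{i-1}{2}$ ($i$ odd). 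Injectivity, together with cyclic monotonicity ($0\prec 1\prec 2$ maps to $a\prec b\prec a+d$), forces $a<b<a+d$ in $\ZZ/n$, i.e.\ $b\in\{a+1,\ldots,a+d-1\}$; the edge condition $\ddist_n(f(i),f(i+2l))\le 2l$ is then checked directly. So $f=\alpha_{a,b}$.

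The main obstacle I anticipate is handling the ``odd/even'' bookkeeping in part (b) cleanly — in particular, making rigorous the claim that the tightness of the covering inequalities forces $\ddist_n(f(i),f(i+2))=d$ rather than merely constraining pairwise sums, and then correctly translating the cyclic-monotonicity and injectivity constraints into the precise range $b\in\{a+1,\dots,a+d-1\}$ while verifying that the case $b=a$ (non-injective) and $b=a+d$ (wrapping past the next even vertex, violating monotonicity or injectivity) are genuinely excluded. Everything else is a routine matching of the degree-$1$ equation against a covering count.
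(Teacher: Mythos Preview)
Your approach is essentially the paper's. For (a) you run the same covering/averaging argument against \eqref{eq:degree1vr} to force $\ddist_n(f(i),f(i+l))=k$ for all $i$; the paper then computes $\ddist_n(f(i),f(i+1))=n-\ddist_n(f(i+1),f(i+l+1))-\ddist_n(f(i+l+1),f(i))=n-2k=d$ directly, whereas you assert ``consequently each step is constant'' --- that step needs one more line (from $\sum_{j=i}^{i+l-1}s_j=k$ for all $i$ one gets $s_i=s_{i+l}$, and then $\gcd(l,2l+1)=1$ finishes it), but it is routine. For (b) the paper bypasses your detour entirely: it simply observes that $\{0,2,\ldots,4l\}$ and $\{1,3,\ldots,4l+1\}$ each induce a copy of $\cnkdir{2l+1}{l}$ inside $\cnkdir{2(2l+1)}{2l}$, applies (a) to each restriction, and reads off $a\prec b\prec a+d$ from injectivity and cyclic monotonicity --- exactly where you land after abandoning the doomed attempt to make all $4l+2$ steps equal. (Minor slip: the edge condition you need to check at the end is $\ddist_n(f(i),f(i+2l))\le k$, not $\le 2l$.)
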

\begin{remark}
Every cyclic homomorphism $\theta$ in part (a) is determined by the choice of $a=\theta(0)$ and the condition $\theta(i+1)=\theta(i)+d \md{n}$. Similarly, in part (b) every cyclic homomorphism is determined by the two initial values $a=\alpha(0)$ and $b=\alpha(1)$, together with the requirement that $\alpha(i+2)=\alpha(i)+d \md{n}$.
\end{remark}
\begin{proof}
To prove (a) let $\theta:\cnkdir{2l+1}{l}\to\cnkdir{n}{k}$ be a cyclic homomorphism with $l>0$, since the case $l=0$ is clear. Then we have
\begin{equation*}
(2l+1)k\geq \sum_{i=0}^{2l}\ddist_n(\theta(i),\theta(i+l))=l\cdot\sum_{i=0}^{2l}\ddist_n(\theta(i),\theta(i+1))=ln,
\end{equation*}
where the last equality follows from \eqref{eq:degree1vr}. Since the two extremes are in fact equal, we must have $\ddist_n(\theta(i),\theta(i+l))=k$ for all $i$, which implies
$$\ddist_n(\theta(i),\theta(i+1)) = n-\ddist_n(\theta(i+1),\theta(i+l+1))-\ddist_n(\theta(i+l+1),\theta(i))=n-2k=d$$
as required. Clearly every $\theta_a$ is a cyclic homomorphism, hence (a) is proved.

Part (b) follows immediately, since $\cnkdir{2(2l+1)}{2l}$ contains two induced copies of $\cnkdir{2l+1}{l}$ with vertex sets $\{0,2,\ldots,4l\}$ and $\{1,3,\ldots,4l+1\}$. For an injective cyclic homomorphism $\alpha$ we must have $\alpha(0)\prec\alpha(1)\prec\alpha(2)$ cyclically ordered in $\cnkdir{n}{k}$, i.e.\ $a\prec b\prec a+d$, which yields the restrictions on $b$.
\end{proof}

The cyclic homomorphism $\alpha_{a,b}$ evaluated on the fundamental cycle $\iota_{2l}$ determines a cycle and homology class in $\redhom_{2l}(\cl(\cnk{d(2l+1)}{dl}))$. We will continue to denote both with $\alpha_{a,b}$. The chain representation of the cycle $\alpha_{a,b}$ starts with
\begin{equation}
\label{eq:alpha-cycle}
\alpha_{a,b} = [a,a+d,\ldots,a+2l\cdot d]-[b,b+d,\ldots,b+2l\cdot d]\pm\cdots;
\end{equation}
compare \eqref{eq:iota-cycle}. The homology classes $\alpha_{a,b}$ for various pairs $(a,b)$ satisfy a number of relations worked out in the proof of the next proposition, which is the main result concerning cyclic graphs with $\wf(\dirg{G})=\frac{l}{2l+1}$.

\begin{proposition}
\label{prop:crosspoly-classify}
Suppose $\dirg{G}$ is a cyclic graph which dismantles to $\cnkdir{d(2l+1)}{dl}$. Then the homology group $\redhom_{2l}(\cl(G))=\ZZ^{d-1}$ has a basis $\{e_1,\ldots,e_{d-1}\}$ such that all the cross-polytopal elements in $\redhom_{2l}(\cl(G))$ are
$$\pm\ e_1,\ldots, \pm\ e_{d-1}$$
and
$$e_i-e_j, \quad 1\leq i,j\leq d-1,\ i\neq j.$$
In particular, there are exactly $d(d-1)$ cross-polytopal elements in $\redhom_{2l}(\cl(G))$.
\end{proposition}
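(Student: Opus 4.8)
\emph{Plan.} The goal is to reduce to the core $\dirg G=\cnkdir{n}{k}$ with $(n,k)=(d(2l+1),dl)$, set up a dictionary between cross-polytopal classes and the ``edges'' of a complete graph on $d$ vertices, and then show these edges satisfy exactly the triangle relations, which forces the stated shape.

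\textbf{Reduction and dictionary.} Since $\dirg G$ dismantles to $\cnkdir{n}{k}$, Corollary~\ref{cor:dismantlinghtpy} gives cyclic homomorphisms $\cnkdir{n}{k}\incl\dirg G$ and $\dirg G\to\cnkdir{n}{k}$ inducing mutually inverse isomorphisms on $\redhom_{2l}$ of clique complexes; since a cyclic homomorphism sends a cross-polytopal class to a cross-polytopal class or to $0$ (remark after Definition~\ref{def:crosspoly-class}), these isomorphisms set up a bijection between the cross-polytopal elements of $\redhom_{2l}(\cl(G))$ and of $\redhom_{2l}(\cl(\cnk{n}{k}))$, so we may assume $\dirg G=\cnkdir{n}{k}$, where $\redhom_{2l}(\cl(\cnk{n}{k}))\iso\ZZ^{d-1}$ by Theorem~\ref{thm:cyclichtpy}. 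Write $\sigma_s=\{s,s+d,\dots,s+2ld\}$, $s=0,\dots,d-1$, for the $d$ residue classes modulo $d$ in $\ZZ/n$; each is a clique of $\cnk{n}{k}$, and a short computation shows that for $\bar a\neq\bar b$ the subgraph induced on $\sigma_{\bar a}\cup\sigma_{\bar b}$ is isomorphic to $\cnkdir{2(2l+1)}{2l}$ (the non-edges across the two cosets form a perfect matching). By Lemma~\ref{lem:crosspoly-homo}(b) every cross-polytopal class is $[\alpha_{a,b}]$ with $b\in\{a+1,\dots,a+d-1\}$, and the image of $\alpha_{a,b}$ is precisely $\sigma_{\bar a}\cup\sigma_{\bar b}$ for the (distinct) residues $\bar a,\bar b$ of $a,b$; hence $[\alpha_{a,b}]$ is, up to sign, the image of the fundamental class of the cross-polytope on $\sigma_{\bar a}\cup\sigma_{\bar b}$. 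The various $\alpha_{a,b}$ with fixed ordered residue pair $(\bar a,\bar b)$ differ by a shift of the vertices by a multiple of $d$, which on that sub-cross-polytope is a shift by $2$ and hence orientation-preserving, so they all give one well-defined class $g_{\bar a\bar b}\in\redhom_{2l}(\cl(\cnk{n}{k}))$; swapping $\bar a$ and $\bar b$ corresponds to the antipodal map (shift by $2l+1$) of the sub-cross-polytope, of degree $-1$, so $g_{\bar b\bar a}=-g_{\bar a\bar b}$.

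\textbf{The triangle relations.} The heart of the argument is to orient the $g_{\bar a\bar b}$ consistently so that
$$g_{\bar a\bar c}=g_{\bar a\bar b}+g_{\bar b\bar c}\qquad\text{for pairwise distinct }\bar a,\bar b,\bar c .$$
I would prove this by restricting to the subcomplex spanned by three cosets $\sigma_{\bar a}\cup\sigma_{\bar b}\cup\sigma_{\bar c}$: its underlying graph is cyclic with winding fraction $\tfrac{l}{2l+1}$, so by Theorem~\ref{thm:cyclichtpy} its clique complex is $\bigvee^{2}S^{2l}$, and a Mayer--Vietoris argument over the three sub-cross-polytopes it contains shows that $g_{\bar a\bar b},g_{\bar b\bar c},g_{\bar a\bar c}$ span its rank-$2$ homology and satisfy a single $\pm$-relation; tracking orientations through the explicit cycles \eqref{eq:iota-cycle}--\eqref{eq:alpha-cycle} (including the sign $(-1)^{l(l+3)/2}$ and the shift-equivariance noted above) identifies this relation, after a global choice of orientations, with the displayed one. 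Pushing forward along the inclusion of the three-coset subcomplex into $\cl(\cnk{n}{k})$ yields the relation there.

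\textbf{Basis and conclusion.} Put $e_i:=g_{i0}$ for $i=1,\dots,d-1$ and $e_0:=0$. The subcomplex $W$ spanned by the $d-1$ sub-cross-polytopes on $\sigma_i\cup\sigma_0$ is a union of $d-1$ copies of $S^{2l}$ glued along the common contractible subcomplex on $\sigma_0$, hence $W\htpyequiv\bigvee^{d-1}S^{2l}$ with $\redhom_{2l}(W)$ freely generated by their fundamental classes, which map to $e_1,\dots,e_{d-1}$. Since by the previous paragraph every cross-polytopal class equals $e_{\bar a}-e_{\bar b}$, and these generate $\redhom_{2l}(\cl(\cnk{n}{k}))\iso\ZZ^{d-1}$ (either from the explicit generators in the proof of Theorem~\ref{thm:cnk}, or by showing $W\incl\cl(\cnk{n}{k})$ is a homotopy equivalence), the $e_i$ form a $\ZZ$-basis. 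Running $e_{\bar a}-e_{\bar b}$ over all ordered pairs of distinct residues gives exactly $\pm e_1,\dots,\pm e_{d-1}$ together with $e_i-e_j$ $(1\le i\neq j\le d-1)$, a total of $2(d-1)+(d-1)(d-2)=d(d-1)$ distinct elements; transporting back through the reduction step finishes the proof. I expect the real obstacle to be the \emph{signed} triangle relation: the Mayer--Vietoris count readily gives \emph{some} $\pm$-relation among $g_{\bar a\bar b},g_{\bar b\bar c},g_{\bar a\bar c}$, but obtaining a globally consistent orientation (so the $g$'s genuinely behave like the differences $[\bar a]-[\bar b]$ in $\redhom_0$ of $d$ points, with no stray signs) requires careful use of the explicit cycle formulas; a secondary point is the generation statement used in the last step.
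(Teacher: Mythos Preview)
Your approach is genuinely different from the paper's, and it is essentially sound, but both of the obstacles you flag at the end are exactly the places where the paper's method is sharper.

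The paper does \emph{not} go through triangle relations or a Mayer--Vietoris argument on three-coset subcomplexes. Instead, after the same reduction to $\cnkdir{n}{k}$ with $(n,k)=(d(2l+1),dl)$, it writes down explicit dual \emph{cocycles}: since each $2l$-simplex $[a,a+d,\dots,a+2ld]$ is maximal in $\cl(\cnk{n}{k})$, the cochain $\beta_a=[a,a+d,\dots,a+2ld]^\vee$ is automatically a cocycle. Using only the two leading terms of \eqref{eq:alpha-cycle} one checks $\beta_i(\alpha_{j,d})=\delta_{ij}$ for $1\le i,j\le d-1$; since both (co)homology groups are free abelian of rank $d-1$, this forces $\{\alpha_{1,d},\dots,\alpha_{d-1,d}\}$ to be a basis of $\redhom_{2l}$ with dual basis $\{\beta_1,\dots,\beta_{d-1}\}$. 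Then for an \emph{arbitrary} $\alpha_{a,b}$ one simply reads off the coordinates $\beta_i(\alpha_{a,b})$ from the two leading simplices in \eqref{eq:alpha-cycle} (noting the shift identity $\alpha_{a+d,b+d}=\alpha_{a,b}$), and a short case check gives $\alpha_{a,b}\in\{\pm e_i,\ e_i-e_j\}$ with $e_i=\alpha_{i,d}$.

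What this buys relative to your route: the dual-cocycle pairing simultaneously resolves \emph{both} of your acknowledged gaps. The generation statement falls out of the Kronecker pairing with no appeal to an unproved homotopy equivalence $W\incl\cl(\cnk{n}{k})$ or to external generators from the proof of Theorem~\ref{thm:cnk}; and no ``globally consistent orientation'' ever needs to be engineered, because each coordinate is computed independently as a Kronecker pairing on a single maximal simplex, so the signs never interact. By contrast, your plan requires first proving the signed triangle relation (which, as you note, demands delicate bookkeeping beyond the rank count) and then separately establishing that the $e_i$ span integrally, not just rationally---your argument via $W$ shows they are images of a basis under some map $\ZZ^{d-1}\to\ZZ^{d-1}$, which a priori could hit only a finite-index subgroup. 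Your coset-and-triangle picture is conceptually appealing (it identifies the cross-polytopal classes with the edge set of $K_d$, realized as differences in the root lattice $A_{d-1}$), but the paper's cocycle trick is the cleaner way to close both loops.
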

\begin{proof}
In the first step we will prove the result for $\dirg{G}=\cnkdir{d(2l+1)}{dl}$. Denote $(k,n)=(dl,d(2l+1))$. 

For an oriented simplex $\sigma$ in a simplicial complex $K$ let $\sigma^\vee$ denote the cochain which assigns $1$ to $\sigma$, $-1$ to $\sigma$ with opposite orientation, and $0$ to all other oriented simplices of $K$. For every $a=0,\ldots,n-1$ define a cochain $\beta_a$ in $\cl(\cnk{n}{k})$ by
\begin{equation*}
\label{eq:beta-def}
\beta_a = [a,a+d,\ldots,a+2l\cdot d]^\vee.
\end{equation*}
Since the face $[a,a+d,\ldots,a+2l\cdot d]$ is maximal in $\cl(\cnk{n}{k})$, the cochain $\beta_a$ is in fact a cocycle, and it determines a cohomology class which we denote with the same symbol. Using \eqref{eq:alpha-cycle} we verify that for $1\leq i,j\leq d-1$ 
\begin{equation*}
\label{eq:dual-pairing}
\beta_i(\alpha_{j,d})=\begin{cases}1 & \mathrm{if}\ i=j, \\ 0 & \mathrm{if}\ i\neq j.\end{cases}
\end{equation*}
Since the groups $\redhom_{2l}(\cl(\cnk{n}{k}))$ and $\redhom^{2l}(\cl(\cnk{n}{k}))$ are both free abelian of rank $d-1$, the above implies that $\{\alpha_{1,d},\ldots,\alpha_{d-1,d}\}$ is a basis of homology and $\{\beta_1,\ldots,\beta_{d-1}\}$ is its dual basis of cohomology. In particular, every element $v\in \redhom_{2l}(\cl(\cnk{n}{k}))$ has a decomposition
\begin{equation}
\label{eq:decomposition}
v=\sum_{i=1}^{d-1}\beta_i(v)\cdot\alpha_{i,d}.
\end{equation}
Note that $\beta_i(\alpha_{a,b})$ depends only on the evaluation of $\beta_i$ on the two leading terms in \eqref{eq:alpha-cycle}, since $\beta_i$ evaluates to $0$ on all the omitted terms. The oriented simplices appearing in $\alpha_{a,b}$ and $\alpha_{a+d,b+d}$ differ by a cyclic shift, hence by an even number of $2l$ transpositions, and are therefore equal. That means we have the identity
$$\alpha_{a+d,b+d}=\alpha_{a,b}.$$
It follows that all cross-polytopal classes can be written as $\alpha_{a,b}$ with $0\leq a\leq d-1$ and $a+1\leq b\leq a+d-1$.

If $a=0$ then $1\leq b\leq d-1$ and the only non-zero pairing in \eqref{eq:decomposition} is $\beta_b(\alpha_{0,b})=-1$, and hence $\alpha_{0,b}=-\alpha_{b,d}$.

If $1\leq a<b\leq d-1$ then $\beta_a(\alpha_{a,b})=1$ and $\beta_b(\alpha_{a,b})=-1$; hence $\alpha_{a,b}=\alpha_{a,d}-\alpha_{b,d}$.

If $1\leq a\leq d-1$ and $b=d$ then $\alpha_{a,b}=\alpha_{a,d}$ is itself one of the generators.

If $1\leq a\leq d-1$ and $d+1\leq b\leq a+d-1$ then $1\leq b-d<a\leq d-1$. Using the cyclic shift argument we obtain $\beta_a(\alpha_{a,b})=1$ and $\beta_{b-d}(\alpha_{a,b})=-1$, hence $\alpha_{a,b}=\alpha_{a,d}-\alpha_{b-d,d}$.

It follows that the proposition is true with $e_i=\alpha_{i,d}$ for $i=1,\ldots,d-1$.

\medskip
Now suppose $\dirg{G}$ is an arbitrary cyclic graph which dismantles to $\cnkdir{n}{k}$. By Corollary~\ref{cor:dismantlinghtpy} the cyclic homomorphisms $\cnkdir{n}{k}\xrightarrow{\iota} \dirg{G}\xrightarrow{\pi}\cnkdir{n}{k}$ induce isomorphisms $\redhom_{2l}(\cl(\cnk{n}{k}))\xrightarrow{\cong}\redhom_{2l}(\cl(G))\xrightarrow{\cong}\redhom_{2l}(\cl(\cnk{n}{k}))$ with the composition being the identity. It follows that the cross-polytopal classes $\iota_*(e_1),\ldots,\iota_*(e_{d-1})$ form a basis of $\redhom_{2l}(\cl(G))$ and that $\pm\iota_*(e_i)$, $\iota_*(e_i)-\iota_*(e_j)$, $i\neq j$, are cross-polytopal. Moreover, if $\alpha\in\redhom_{2l}(\cl(G))$ is cross-polytopal then $\pi_*(\alpha)$ is one of $\pm e_i$, $e_i-e_j$, $i\neq j$, and therefore $\alpha$ must be one of $\pm\iota_*(e_i)$, $\iota_*(e_i)-\iota_*(e_j)$, $i\neq j$. That completes the proof.
\end{proof}
 
 \medskip
 
We are now prepared to prove Proposition~\ref{prop:vrc-even}, using the algebraic fact in Proposition~\ref{prop:appendixA} of the appendix.

\begin{proof}[Proof of Proposition~\ref{prop:vrc-even}]
Let $Y_0\subseteq X$ be a finite subset which achieves $\wfc{Y_0}{r}=\frac{l}{2l+1}$. Then we have $\wfc{Y}{r}=\frac{l}{2l+1}$ for any finite subset $Y$ with $Y_0\subseteq Y\subseteq X$. By Corollary~\ref{cor:vrchtpy} every space in the diagram
$$\vrc{X}{r} = \colim_{Y\in F(X;Y_0)}\vrc{Y}{r}$$
is homotopy equivalent to a finite wedge sum of $2l$-spheres. It follows immediately that $\vrc{X}{r}$ is simply-connected and its homology is torsion-free and concentrated in degree $2l$. It remains to show that the group $\redhom_{2l}(\vrc{X}{r})$ is free abelian. Indeed, if this is the case then $\vrc{X}{r}$ is a model of the Moore space $M(\bigoplus^\kappa \ZZ,2l)$, unique up to homotopy and equivalent to $\bigvee^\kappa S^{2l}$, for some cardinal number $\kappa$.

A nonzero homology class in $\redhom_{2l}(\vrc{X}{r})$ will be called \emph{cross-polytopal} if it is the image under the inclusion $\vrc{Y}{r}\incl\vrc{X}{r}$ of a cross-polytopal class in $\redhom_{2l}(\vrc{Y}{r})$ for some finite $Y\in F(X;Y_0)$. Since the groups $\redhom_{2l}(\vrc{Y}{r})$ are generated by cross-polytopal classes, the same is true about their colimit, $\redhom_{2l}(\vrc{X}{r})$. 

A subset $B$ of an abelian group $G$ is called \emph{independent} if for every finite subset $\{b_1,\ldots,b_s\}\subseteq B$ the identity $\sum_{i=1}^s a_ib_i=0$, with $a_1,\ldots,a_s\in\ZZ$, implies $a_1=\cdots=a_s=0$. An independent set $B$ generates a free abelian subgroup of $G$ with basis $B$. Now let $\niceb$ be the family of all subsets $B\subseteq \redhom_{2l}(\vrc{X}{r})$ such that
\begin{itemize}
\item[(a)] all elements of $B$ are cross-polytopal,
\item[(b)] $B$ is independent.
\end{itemize}
The family $\niceb$ is nonempty and closed under increasing unions. Using Zorn's lemma pick an inclusion-wise maximal set $B$ satisfying (a) and (b). If $B$ generates $\redhom_{2l}(\vrc{X}{r})$ then we are done, because the group $\langle B\rangle$ generated by $B$ is free abelian. 

We suppose for a contradiction that $B$ does not generate $\redhom_{2l}(\vrc{X}{r})$, and hence there exists a cross-polytopal class $v\not\in\langle B\rangle$ since $\redhom_{2l}(\vrc{X}{r})$ is generated by cross-polytopal classes. By maximality of $B$ the set $B\cup\{v\}$ violates (b), and hence there exists a non-trivial linear relation involving $v$ and a finite number of elements $b_1,\ldots,b_s\in B$. In other words, some non-trivial multiple of $v$ lies in the subgroup of $\redhom_{2l}(\vrc{X}{r})$ generated by $b_1,\ldots,b_s$. The same relation holds for the cross-polytopal representatives $v, b_1,\ldots,b_s$ at some finite stage $\redhom_{2l}(\vrc{Y}{r})$ of the colimit, where $\vrc{Y}{r}$ dismantles to $\cnkdir{d(2l+1)}{dl}$. Changing signs if necessary we may assume that each of the elements $v,b_1,\ldots,b_s\in\redhom_{2l}(\vrc{Y}{r})=\ZZ^{d-1}$ is of the form $e_i$ or $e_i-e_j$, $i<j$, for the basis $\{e_1,\ldots,e_{d-1}\}$ from Proposition~\ref{prop:crosspoly-classify}. Now Proposition~\ref{prop:appendixA} implies that $v$ itself lies in the subgroup of $\redhom_{2l}(\vrc{Y}{r})$ generated by $b_1,\ldots,b_s$, and hence in the subgroup of $\redhom_{2l}(\vrc{X}{r})$ generated by $b_1,\ldots,b_s$. This contradiction shows that in fact $\redhom_{2l}(\vrc{X}{r})=\langle B\rangle$ is free abelian.
\end{proof}

The last item in this section is the proof of Theorem~\ref{thm:vrcleqfull}.

\begin{proof}[Proof of Theorem~\ref{thm:vrcleqfull}]
By Lemma~\ref{lem:wfdenseA} we have $\wfcleq{S^1}{r}=r$, and so all statements concerning the generic values of $r$ and $r'$ follow from part (1) of  Proposition~\ref{prop:subsetsofs1odd}.

If $r=\frac{l}{2l+1}$ then the value of $\wfcleq{S^1}{\frac{l}{2l+1}}=\frac{l}{2l+1}$ is attained by the vertex set of any regular $(2l+1)$-gon. Proposition~\ref{prop:vrc-even} implies that $\vrcleq{S^1}{\frac{l}{2l+1}}$ is homotopy equivalent to a wedge of copies of $S^{2l}$, and so it remains to count the number of wedge summands. For $t\in(0,\frac{1}{2l+1})_{S^1}$ let $Y_t=\{\frac{i}{2l+1}, t+\frac{i}{2l+1}~:~i=0,\ldots,2l\}$. We have an isomorphism $\vrleqdir{Y_t}{\frac{l}{2l+1}}=\cnkdir{2(2l+1)}{2l}$, hence each inclusion $j_t: \vrcleq{Y_t}{\frac{l}{2l+1}}\incl \vrcleq{S^1}{\frac{l}{2l+1}}$ determines a homology class $\alpha_t={j_t}_*(\iota_{2l})$ in $\vrcleq{S^1}{\frac{l}{2l+1}}$. Each simplex $\beta_t=[t+\frac{i}{2l+1}~:~i=0,\ldots,2l]$ is a maximal face of  $\vrcleq{S^1}{\frac{l}{2l+1}}$, which appears in the support of $\alpha_t$ but not in any other $\alpha_s$ for $s\neq t$. This implies that the classes $\alpha_t$ are independent, and hence $\redhom_{2l}(\vrcleq{S^1}{\frac{l}{2l+1}})$ contains a free abelian group of rank $\mathfrak{c}$. We get a corresponding upper bound by noting that the cardinality of the set of $2l$-simplices in $\vrcleq{S^1}{\frac{l}{2l+1}}$ is also $\mathfrak{c}$, and hence the cardinality of the wedge sum is $\mathfrak{c}$. 
\end{proof}

\begin{remark}
\label{rem:planar}
Chambers et al.\ \cite[Section 6.(1)]{Chambers2010} asked if for all $k\geq 2$ and any finite subset $X\subseteq \RR^2$ the homology group $\redhom_k(\vrc{X}{r})$ is generated by induced $k$-dimensional cross--polytopal spheres (for all $k$ these are complexes of the form $\cl(\cnk{2k+2}{k})$, where we considered $k=2l$ in this section). Proposition~\ref{prop:crosspoly-classify} confirms this when $k=2l$ for subsets $X\subseteq S^1\subseteq \RR^2$.  When $k=2l+1$ is odd the claim fails already for $X\subseteq S^1$. For example, one can check that for $\frac13<\frac{k}{n}<\frac{3}{8}$ the graph $\cnk{n}{k}$ does not contain an induced subgraph isomorphic to $\cnk{8}{3}$, yet $\redhom_3(\cl(\cnk{n}{k}))=\redhom_3(S^3)\neq 0$ by Theorem~\ref{thm:cnk}.
\end{remark}
\section{\v{C}ech complexes}
\label{sect:cech}

The \v{C}ech complex is another simplicial complex commonly associated with a metric space. For a point $x$ in a metric space $M$, let $\ballless(x;r)$ and $\ballleq(x;r)$ denote the open and closed balls in $M$ with center $x$ and radius $r$.
\begin{definition}
\label{def:cech}
For a subset $X\subset M$ of an ambient metric space $M$ and $r>0$, the \emph{\v{C}ech complex} $\cechless(X,M;r)$ \emph{(}resp.\ $\cechleq(X,M;r)$\emph{)} is the simplicial complex with vertex set $X$, where a finite subset $\sigma\subseteq X$ is a face if and only if $\bigcap_{x\in \sigma}B_<(x;r)\neq\emptyset$ \emph{(}resp.\ $\bigcap_{x\in \sigma}B_\leq(x;r)\neq\emptyset$\emph{)}.
\end{definition}
As before, we will omit the subscript in statements which apply to both $<$ and $\leq$. An equivalent definition of $\cech(X,M;r)$ is as the nerve of the family of balls $\{B(x;r)~:~x\in X\}$. Chazal, de~Silva, and Oudot \cite{ChazalDeSilvaOudot2013} refer to these complexes as \emph{ambient \v{C}ech complexes} with landmark set $X$ and witness set $M$. We have the inclusion $\cech(X,M;r/2)\subseteq \vrc{X}{r}$, and if $M$ is a geodesic space then $\vr{X}{r}$ is the $1$-skeleton not only of $\vrc{X}{r}$ but also of $\cech(X,M;r/2)$. 
\begin{notation}
If $X\subseteq S^1$ then we write $\cech(X;r)$ for $\cech(X,S^1;r)$.
\end{notation} 
If $M=S^1$ then the balls are open or closed arcs, and one can see that finite $\sigma\subset X$ is a face of $\cech(X;r)$ if and only if $\sigma$ is contained in some arc of length $2r$.

One can develop a parallel theory of dismantling, winding fractions, and homotopy types for the complexes $\cech(X;r)$ with $X\subseteq S^1$, leading to straightforward analogues of all the results from this paper. We note that the sequence of critical values $(0,\frac{1}{3},\frac{2}{5},\frac{3}{7},\ldots,\frac{l}{2l+1},\ldots)$ determining the transitions of homotopy types will be replaced in the case of \v{C}ech complexes with the sequence
$(0,\frac{1}{4},\frac{2}{6},\frac{3}{8},\ldots,\frac{l}{2(l+1)},\ldots)$, and we refer the reader to \cite{AAFPP-J} for some results in the case of finite $X$. Instead of pursuing the parallel theory of winding fractions for \v{C}ech complexes, we provide a direct transformation from Vietoris--Rips complexes to \v{C}ech complexes. This transformation recovers most but not all of the results that could be obtained with the parallel theory, and we believe it is of independent interest. To our knowledge, Theorems~\ref{thm:cechleqfull} and \ref{thm:cechlessfull} are the first computation for a non-contractible connected manifold $M$ of the homotopy types of $\cech(M,M;r)$ for arbitrary $r$.

Let $\nicep(S^1)$ denote the power set of $S^1$. If $X\subseteq S^1$ and $a,b\in\RR$ then we write $aX+b=\{(ax+b) \md{1}~:~ x\in X\}$, where it is understood that each point $x$ is represented by a real number in $[0,1)$. 

\begin{theorem}
\label{thm:cech-vr-any}
For each $0<r<\frac12$ let $T_r:\mathcal{P}(S^1)\to\mathcal{P}(S^1)$ be given by
$$T_r(X) = \frac{1}{1+2r}X\ \cup \ \big(\frac{1}{1+2r}\cdot(X\cap[0,2r)_{S^1})+\frac{1}{1+2r}\big).$$
Then the (non-continuous) map $\pi_r:S^1\to S^1$ defined by
$$\pi_r(y)=(1+2r)y \md{1}\quad \mathrm{for}\ y\in[0,1)$$
induces a simplicial homotopy equivalence
$$\pi_r: \vrcleq{T_r(X)}{\frac{2r}{1+2r}}\xrightarrow{\htpyequiv} \cechleq(X;r).$$
\end{theorem}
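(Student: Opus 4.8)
Here is the plan.

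\medskip
\noindent\textbf{Reduction and the unrolled model.} First I would reduce to finite $X$: since $T_r(Y)\subseteq T_r(X)$ whenever $Y\subseteq X$ and $T_r$ commutes with unions, one gets $\vrcleq{T_r(X)}{\frac{2r}{1+2r}}=\colim_Y\vrcleq{T_r(Y)}{\frac{2r}{1+2r}}$ and $\cechleq(X;r)=\colim_Y\cechleq(Y;r)$ over finite $Y\subseteq X$, compatibly with $\pi_r$ and with the inclusions; as in Section~\ref{sect:s1} all transition maps are inclusions of subcomplexes, so the diagram comparison theorems reduce everything to finite $X$. For finite $X$ I would pass to the ``unrolled'' model: let $C$ be the circle of circumference $1+2r$, let $\tilde X\subseteq C$ be the union of $X$ (inside $[0,1)\subseteq C$) with the shifted copy $\{x+1:x\in X\cap[0,2r)\}\subseteq[1,1+2r)$, and let $q\colon\tilde X\to X$ be the surjection fixing $X$ and sending each copy $x+1$ to $x$. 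Scaling $C$ down by $1+2r$ identifies $\tilde X$ with $T_r(X)$ and multiplies all distances by $\tfrac{1}{1+2r}$, hence identifies $\vrcleq{T_r(X)}{\frac{2r}{1+2r}}$ with $\vrcleq{\tilde X}{2r}$ and identifies $\pi_r$ with the simplicial map induced by $q$. Note $2r<\tfrac12(1+2r)$ since $r<\tfrac12$, and $d_C(x+1,x)=2r$ for each copied point $x$, so every pair $\{x,x+1\}$ is an edge of $\vrleqdir{\tilde X}{2r}$.

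\medskip
\noindent\textbf{$\pi_r$ is a simplicial map onto $\cechleq(X;r)$.} A simplex of $\vrcleq{\tilde X}{2r}$ is a finite $\tilde\sigma\subseteq\tilde X$ with $\mathrm{diam}_C(\tilde\sigma)\leq 2r$. Splitting $\tilde\sigma=\tilde\sigma_0\sqcup\tilde\sigma_1$ along the two arcs $[0,1)$ and $[1,1+2r)$, and using that $2r$ is below half the circumference of $C$, one checks that two points of $\tilde\sigma$ in the same arc are at ``naive'' distance $\leq 2r$, while the constraints on mixed pairs force $q(\tilde\sigma)$ into a single closed arc of length $2r$ in $S^1$; this is a short case analysis on whether $\tilde\sigma$ sits near the cut point of $q$ or straddles it. Hence $q$ maps $\vrcleq{\tilde X}{2r}$ into $\cechleq(X;r)$. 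For surjectivity, given $\sigma\subseteq X$ inside a closed arc $A$ of length $2r$, I would lift each point of $\sigma\cap[0,1)\cap A$ to itself and, when $A$ wraps past $0$, lift each point of $\sigma\cap[0,2r)\cap A$ to its copy in $[1,1+2r)$; the resulting lift lies in an arc of length $2r$ in $C$ and maps onto $\sigma$, so $\pi_r$ is onto.

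\medskip
\noindent\textbf{$\pi_r$ is a homotopy equivalence — the crux.} The natural route is to factor the fold $q$ as a composition of single vertex identifications, one per copied point, identifying $x_i+1$ with $x_i$ along the edge $\{x_i,x_i+1\}$, and to show each step is a homotopy equivalence. The subtlety is that $\pi_r$ is a \emph{degenerate} simplicial map — it collapses the edges $\{x_i,x_i+1\}$ — so one cannot simply invoke a Quillen-type fiber theorem on $\pi_r$ itself. Instead I would process the copied points in a carefully chosen cyclic order and argue that, in the intermediate cyclic graph, the vertex being removed is dominated by its successor in the sense of Definition~\ref{def:dominated}; then the identification is exactly the move of Lemma~\ref{lem:removedominated}, and by Corollary~\ref{cor:dismantlinghtpy} (equivalently, because the relevant link is a cone) it induces a homotopy equivalence of clique complexes. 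The composition of these homotopy equivalences equals $\pi_r$ (after checking that the composite vertex map is indeed $q$), so $\pi_r$ is a homotopy equivalence.

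\medskip
\noindent\textbf{Main obstacle.} I expect this last step to be the hard part: one must choose the order of the identifications and carry out the bookkeeping of the closed neighbourhoods $N^\pm_C(\cdot)$ in $C$ so that at every stage the dominance condition (equivalently, the ``link is a cone'' condition) actually holds — a straightforward choice such as folding copies directly onto their originals is \emph{not} a move of Lemma~\ref{lem:removedominated}, and folding along successors only works after the remaining copies between them have been processed. Making this ordering precise, and confirming that the resulting composite simplicial map is $\pi_r$ on the nose, is the technical heart of the argument; the simpliciality and surjectivity of $\pi_r$ are routine by comparison.
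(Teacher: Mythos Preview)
Your unrolled model and the check that $\pi_r$ is simplicial match the paper. The gap is in the final step, and it begins with a misconception: you say ``one cannot simply invoke a Quillen-type fiber theorem on $\pi_r$ itself'' because $\pi_r$ collapses edges. In fact Barmak's simplicial version of Quillen's Theorem~A applies to arbitrary simplicial maps, degenerate or not, and this is exactly what the paper uses: for each face $\tau\in\cechleq(X;r)$ it exhibits a vertex $y_0$ making $\pi_r^{-1}(\tau)$ a cone, and then cites Barmak. No factorization and no reduction to finite $X$ are needed.

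The dismantling route you propose instead is not merely hard to order correctly---it is structurally blocked. Each move of Lemma~\ref{lem:removedominated} is a retraction onto the clique complex of an \emph{induced subgraph}, so any composite of such moves lands in $\cl$ of the subgraph of $\vrleq{\tilde X}{2r}$ induced on the original vertices $X\subseteq[0,1)$. That complex is not $\cechleq(X;r)$: it is missing precisely the edges $\{x,x'\}$ that are close in $S^1$ only by wrapping past $0$ (such pairs are far apart in $C$), and $\cechleq(X;r)$ is not a clique complex in general. Moreover the composite vertex map of dismantling steps sends each removed vertex to a cyclic \emph{successor}, never to its $q$-image $x$, which is nearly antipodal to the copy $x+1$ in $C$; so the composite cannot equal $q$ on the nose. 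The undirected fold along the edge $\{x,x+1\}$ would require $N[x+1]\subseteq N[x]$, and this fails because $x+1$ has neighbours in $[1-2r,1)$ that $x$ does not. The example $X=\{0,\,0.1,\,0.9\}$ with $r=0.1$ already exhibits all three obstructions.
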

\begin{figure}
\begin{tabular}{cc}
\includegraphics[scale=0.8]{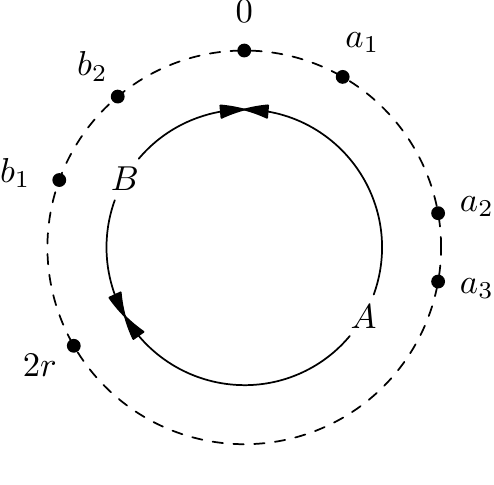} & \includegraphics[scale=0.8]{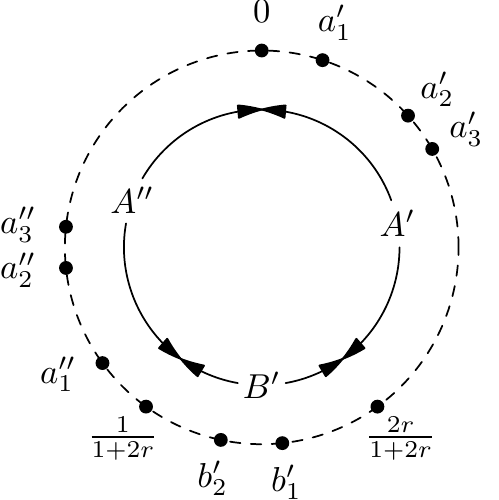}
\end{tabular}
\caption{The action of the operator $T_r$ from Theorem~\ref{thm:cech-vr-any}. \emph{(Left)} A set $X$ split as $X=A\sqcup B$, where $A=X\cap[0,2r)_{S^1}$ and $B=X\cap[2r,1)_{S^1}$. \emph{(Right)} We have $T_r(X)=A'\sqcup B'\sqcup A''$, where $A',A''$ and $B'$ are suitably rescaled and shifted copies of $A$ and $B$. The map $\pi_r:T_r(X)\to X$ sends back $A'$ to $A$, $B'$ to $B$, and $A''$ to $A$.}
\label{fig:tr}
\end{figure}
\begin{proof}
We first verify that $\pi_r(T_r(X))=X$. Take any $y\in T_r(X)$. If $y=\frac{1}{1+2r}x$ for $x\in X$ then $\pi_r(y)=x$. If $y=\frac{1}{1+2r}x+\frac{1}{1+2r}$ for some $x\in X\cap[0,2r)_{S_1}$ then $\pi_r(y)=(x+1)\md{1}=x$. It means that $\pi_r$ restricts to a surjection $\pi_r:T_r(X)\to X$.

Next we check that $\pi_r$ induces a map of simplicial complexes. Let $\sigma$ be any face of the complex $\vrcleq{T_r(X)}{\frac{2r}{1+2r}}$ and let $x_0=\min(\sigma)$, so that $\sigma\cap[0,x_0)_{S^1}=\emptyset$. To prove that $\pi_r(\sigma)$ is a face of $\cechleq(X;r)$ we need to show that it is contained in a closed arc of length $2r$. There are three cases.
\begin{itemize}
\item $x_0\in[\frac{1}{1+2r},1)_{S^1}$. Then $\sigma\subseteq [\frac{1}{1+2r},1)_{S^1}$ and $\pi_r(\sigma)\subseteq [0,2r)_{S^1}$.
\item $x_0\in[\frac{2r}{1+2r},\frac{1}{1+2r})_{S^1}$. Then the only way $x_0$ can be in distance at most $\frac{2r}{1+2r}$ from the other points in $\sigma$ is if that distance is measured clockwise from $x_0$. It means that $\sigma\subseteq [x_0,x_0+\frac{2r}{1+2r}]_{S^1}$ with $x_0+\frac{2r}{1+2r}<1$ and $\pi_r(\sigma)\subseteq[(1+2r)x_0,(1+2r)x_0+2r \md{1}]_{S^1}$.
\item $x_0\in[0,\frac{2r}{1+2r})_{S^1}$. Note that $(x_0-\frac{2r}{1+2r})\md{1}=x_0+\frac{1}{1+2r}$, hence we can write $\sigma\subseteq[x_0,x_0+\frac{2r}{1+2r}]_{S^1}\cup[x_0+\frac{1}{1+2r},1)_{S^1}$. An application of $\pi_r$ gives $$\pi_r(\sigma)\subseteq[(1+2r)x_0,(1+2r)x_0+2r]_{S^1}\cup[(1+2r)x_0,2r)_{S^1}\subseteq
[(1+2r)x_0,(1+2r)x_0+2r]_{S^1}.$$
\end{itemize}
To prove that $\pi_r$ is a homotopy equivalence it suffices to check that the preimage $\pi_r^{-1}(\tau)$ of every face $\tau\in\cechleq(X;r)$ is contractible. The conclusion is then provided by the simplicial version of Quillen's Theorem A due to Barmak \cite[Theorem 4.2]{BarmakTheoremA2011}. Suppose that
$$\tau=\{a_1,\ldots,a_s\}\cup\{b_1,\ldots,b_t\},$$
where possibly $s=0$ or $t=0$, and
$$0\leq a_1<\cdots<a_s<2r\leq b_1<\cdots<b_t<1.$$
The preimage $\pi_r^{-1}(\tau)$ is the subcomplex of $\vrcleq{T_r(X)}{\frac{2r}{1+2r}}$ induced by the vertex set
$$V(\pi_r^{-1}(\tau))=\{a_1',\ldots,a_s'\}\cup\{b_1',\ldots,b_t'\}\cup\{a_1'',\ldots,a_s''\}$$
where $a_i'=\frac{1}{1+2r}a_i$, $b_i'=\frac{1}{1+2r}b_i$, and $a_i''=\frac{1}{1+2r}a_i+\frac{1}{1+2r}$ (see Figure~\ref{fig:tr}).

Note that $\ddist(a_i',b_j')=\frac{1}{1+2r}\ddist(a_i,b_j)$ and $\ddist(b_i',a_j'')=\frac{1}{1+2r}\ddist(b_i,a_j)$ for all $i,j$. Moreover, $\ddist(b_i',b_j')=\frac{1}{1+2r}\ddist(b_i,b_j)$ for $i<j$, $\ddist(a_i',a_j'')=\frac{1}{1+2r}\ddist(a_i,a_j)$ for $i>j$, and $\ddist(a_i'',a_i')=\frac{2r}{1+2r}$.

Since $\tau$ is contained in an arc of length $2r$, there is a vertex $x_0\in\tau$ such that $\tau\subseteq[x_0,x_0+2r]_{S^1}$. We find a vertex $y_0\in V(\pi_r^{-1}(\tau))$ such that $y_0$ is at most $\frac{2r}{1+2r}$ away from every other vertex of $V(\pi_r^{-1}(\tau))$. This will end the proof, since then $\pi_r^{-1}(\tau)$ is a cone with apex $y_0$. We need to consider four cases.
\begin{itemize}
\item If $x_0=a_q$ for some $1\leq q\leq s$ then $y_0=a_q'$. The arc $[a_q,a_q+2r]_{S^1}$ contains all points of $\tau$, therefore $[a_q',a_q'+\frac{2r}{1+2r}]_{S_1}$ contains all points of $V(\pi_r^{-1}(\tau))$ up to the point preceding $a_q''$. Moreover $\ddist(a_q'',a_q')=\frac{2r}{1+2r}$ and $[a_q'',a_q']_{S^1}$ covers the remaining points. 
\item If $x_0=b_1$ and $s=0$ then take $y_0=b_1'$. Since $\ddist(b_1,b_t)\leq 2r$, we have $\ddist(b_1',b_t')\leq\frac{2r}{1+2r}$.
\item If $x_0=b_1$ and $s>0$ then take $y_0=a_s''$. From $\ddist(b_1,a_s)\leq 2r$ we get $\ddist(b_1',a_s'')\leq\frac{2r}{1+2r}$ and in addition $\ddist(a_s'',a_s)=\frac{2r}{1+2r}$. This covers the distances from $a_s''$ to all points of $V(\pi_r^{-1}(\tau))$.
\item The last case, $x_0=b_q$ with $q\geq 2$, is impossible since $\ddist(b_q,b_{q-1})>2r$.
\end{itemize}
\end{proof}
\begin{remark}
Theorem~\ref{thm:cech-vr-any} has no variant for $\mathbf{VR}_<$ and $\cech_<$, since the set $T_r(X)$ contains pairs of points in distance exactly $\frac{2r}{1+2r}$ whose existence is essential for the proof.
\end{remark}

In many natural circumstances maps of \v{C}ech complexes can be lifted to maps of Vietoris--Rips complexes via $\pi_r$. Below we describe the case of inclusions.

\begin{proposition}
\label{prop:cech-inclusion}
Suppose $X\subseteq S^1$ and $0<r\leq r'<\frac12$. Then the (non-continuous) map $\eta:S^1\to S^1$ given by
$$\eta(y)=\frac{1+2r}{1+2r'}\cdot y\quad \mathrm{for}\ y\in[0,1)$$
determines a map of Vietoris--Rips complexes which makes the following diagram commute
$$
\xymatrix{
\vrcleq{T_r(X)}{\frac{2r}{1+2r}} \ar[r]^\eta \ar[d]_{\pi_r}^{\htpyequiv} & \vrcleq{T_{r'}(X)}{\frac{2r'}{1+2r'}} \ar[d]_{\pi_{r'}}^{\htpyequiv}\\
\cechleq(X;r) \ar@{^{(}->}[r]^\subseteq & \cechleq(X;r').
}
$$
\end{proposition}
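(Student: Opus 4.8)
The plan is to prove the whole statement at the level of the underlying vertex maps and then promote it to simplicial maps using that every Vietoris--Rips complex is the clique complex of its $1$-skeleton. The key identity, from which nearly everything follows, is
$$\pi_{r'}\bigl(\eta(y)\bigr)=(1+2r')\cdot\tfrac{1+2r}{1+2r'}\,y\md 1=(1+2r)y\md 1=\pi_r(y),\qquad y\in[0,1),$$
which is valid because $r\le r'$ forces $\tfrac{1+2r}{1+2r'}y\in[0,1)$, so $\eta(y)$ needs no reduction modulo $1$. Recall from the proof of Theorem~\ref{thm:cech-vr-any} that $T_r(X)=\pi_r^{-1}(X)$; hence for $y\in T_r(X)$ the identity gives $\pi_{r'}(\eta(y))=\pi_r(y)\in X$, so $\eta(y)\in\pi_{r'}^{-1}(X)=T_{r'}(X)$. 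Thus $\eta$ restricts to a vertex map $T_r(X)\to T_{r'}(X)$, and the square of vertex maps commutes with the inclusion $\cechleq(X;r)\incl\cechleq(X;r')$ — which is itself simplicial, since a finite subset of $X$ lying in a closed arc of length $2r$ also lies in one of length $2r'\ge 2r$.

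Next I would check that $\eta$ sends every edge of $\vrcleq{T_r(X)}{\frac{2r}{1+2r}}$ to an edge of $\vrcleq{T_{r'}(X)}{\frac{2r'}{1+2r'}}$; since both complexes are clique complexes of their $1$-skeleta, this makes $\eta$ a simplicial map. Identify points of $S^1$ with representatives in $[0,1)$ and take $p,q\in T_r(X)$ with, say, $\ddist(p,q)\le\frac{2r}{1+2r}$. Split into the case where the short clockwise arc from $p$ to $q$ does not pass through $0$ — then $q-p\in[0,\tfrac{2r}{1+2r}]$ and $\eta(q)-\eta(p)=\tfrac{1+2r}{1+2r'}(q-p)\le\tfrac{2r}{1+2r'}\le\tfrac{2r'}{1+2r'}$ — and the case where it does — then $p-q\ge 1-\tfrac{2r}{1+2r}=\tfrac{1}{1+2r}$, so $\eta(p)-\eta(q)=\tfrac{1+2r}{1+2r'}(p-q)\ge\tfrac{1}{1+2r'}$, and since $\tfrac{1+2r}{1+2r'}\le 1$ this difference is still below $1$, giving $\ddist(\eta(p),\eta(q))=1-(\eta(p)-\eta(q))\le\tfrac{2r'}{1+2r'}$. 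In either case $\{\eta(p),\eta(q)\}$ is an edge, so $\eta$ is simplicial.

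Finally, two simplicial maps that agree on vertices are equal, so the commutativity of the vertex square upgrades to commutativity of the diagram of simplicial maps; and $\pi_r,\pi_{r'}$ are homotopy equivalences by Theorem~\ref{thm:cech-vr-any}, which completes the proof. I expect the only point requiring any care to be the wrap-around case of the edge check — keeping the reductions modulo $1$ straight for points of $T_r(X)$ near $0$ and near $1$ — while everything else is a direct substitution once the identity $\pi_{r'}\circ\eta=\pi_r$ has been recorded.
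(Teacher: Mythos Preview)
Your proof is correct and follows essentially the same approach as the paper: verify that $\eta$ is simplicial by checking vertices and edges, then confirm commutativity via the identity $\pi_{r'}\circ\eta=\pi_r$. The only noteworthy difference is in the vertex check: the paper argues directly from the two-piece description of $T_r(X)$, whereas you invoke the identity $T_r(X)=\pi_r^{-1}(X)$, which makes the argument a clean one-liner. One small caveat: this identity is not actually stated in the proof of Theorem~\ref{thm:cech-vr-any} (that proof only records the surjectivity $\pi_r(T_r(X))=X$), so strictly speaking you should verify $T_r(X)=\pi_r^{-1}(X)$ yourself---but this is immediate from the definitions, since for $y\in[0,1)$ the equation $(1+2r)y\equiv x\pmod 1$ has the solutions $y=\tfrac{x}{1+2r}$ and (when $x<2r$) $y=\tfrac{x+1}{1+2r}$.
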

\begin{proof}
We first verify that $\eta$ gives a well-defined map of Vietoris--Rips complexes in the top row of the diagram; it suffices to check this map on vertices and edges. For vertices, pick any $y\in T_r(X)$. If $y=\frac{1}{1+2r}x$ for $x\in X$ then $\eta(y)=\frac{1}{1+2r'}x\in T_{r'}(X)$. If $y=\frac{1}{1+2r}x+\frac{1}{1+2r}$ for $x\in X\cap[0,2r)$, then $\eta(y)=\frac{1}{1+2r'}x+\frac{1}{1+2r'}$ with $x\in X\cap[0,2r)\subseteq X\cap [0,2r')$, and hence also in this case $\eta(y)\in T_{r'}(X)$. For edges, we suppose that $0\leq y<y'<1$. If $\ddist(y,y')\leq\frac{2r}{1+2r}$ then $\ddist(\eta(y),\eta(y'))\leq \frac{2r}{1+2r}\cdot\frac{1+2r}{1+2r'}\leq\frac{2r'}{1+2r'}$. If $\ddist(y',y)\leq\frac{2r}{1+2r}$ then $\ddist(y,y')\geq\frac{1}{1+2r}$, hence we get $\ddist(\eta(y),\eta(y'))\geq\frac{1}{1+2r}\cdot\frac{1+2r}{1+2r'}=\frac{1}{1+2r'}$ and therefore $\ddist(\eta(y'),\eta(y))\leq\frac{2r'}{1+2r'}$.

Commutativity of the diagram follows from a direct calculation:
$$\pi_{r'}(\eta(y))=(1+2r')\cdot\frac{1+2r}{1+2r'}\cdot y \md{1}=(1+2r)y\md{1}=\pi_r(y).$$
\end{proof}
For arbitrary $X\subseteq S^1$ Theorem~\ref{thm:cech-vr-any} allows one to determine the homotopy type of $\cechleq(X;r)$ from an efficiently constructible instance of the Vietoris--Rips complex. Here are some examples.

\begin{example}
\label{ex:cech-equal}
Let $X_n=\{0,\frac{1}{n},\ldots,\frac{n-1}{n}\}\subseteq S^1$. Then $\cechleq(X_n;\frac{k}{2n})$ is the complex whose maximal faces are generated from $\{0,\frac{1}{n},\ldots,\frac{k}{n}\}$ via rotations by $\frac{1}{n}$. If $r=\frac{k}{2n}$ then $\frac{2r}{1+2r}=\frac{k}{n+k}$ and $T_r(X_n)=X_{n+k}$. We obtain a homotopy equivalence
$$\cl(C_{n+k}^k)=\vrcleq{X_{n+k}}{\tfrac{k}{n+k}}\xrightarrow{\htpyequiv} \cechleq(X_n;\tfrac{k}{2n}).$$
This special case was proved in \cite[Theorem 8.5]{AAFPP-J}.
\end{example}

\begin{theorem}
\label{thm:cechleqfull}
For $0< r<\frac12$ we have a homotopy equivalence
$$
\cechleq(S^1;r)\htpyequiv \begin{cases}
S^{2l+1} & \mathrm{if}\ \frac{l}{2(l+1)}<r<\frac{l+1}{2(l+2)},\ l=0,1,\ldots, \\
\bigvee^{\mathfrak{c}} S^{2l} & \mathrm{if}\ r=\frac{l}{2(l+1)}.
\end{cases}
$$
Moreover, if $\frac{l}{2(l+1)}<r\leq r'<\frac{l+1}{2(l+2)}$ then the inclusion $\cechleq(S^1;r)\incl\cechleq(S^1;r')$ is a homotopy equivalence.
\end{theorem}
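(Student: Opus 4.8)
The plan is to reduce everything to Theorem~\ref{thm:cech-vr-any} together with Theorem~\ref{thm:vrcleqfull}. First I would compute $T_r(S^1)$. Directly, $\frac{1}{1+2r}S^1=[0,\frac{1}{1+2r})_{S^1}$ and $\frac{1}{1+2r}\cdot\big(S^1\cap[0,2r)_{S^1}\big)+\frac{1}{1+2r}=[\frac{1}{1+2r},1)_{S^1}$, so $T_r(S^1)=S^1$. Hence Theorem~\ref{thm:cech-vr-any} provides a homotopy equivalence $\pi_r\colon\vrcleq{S^1}{\frac{2r}{1+2r}}\xrightarrow{\htpyequiv}\cechleq(S^1;r)$.

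Next I would substitute $s=\frac{2r}{1+2r}$. This is an increasing bijection of $(0,\frac12)$ onto itself with inverse $r=\frac{s}{2(1-s)}$, and an elementary computation shows $s=\frac{l}{2l+1}\Leftrightarrow r=\frac{l}{2(l+1)}$ and $\frac{l}{2l+1}<s<\frac{l+1}{2l+3}\Leftrightarrow\frac{l}{2(l+1)}<r<\frac{l+1}{2(l+2)}$. Feeding these equivalences into Theorem~\ref{thm:vrcleqfull} yields $\cechleq(S^1;r)\htpyequiv S^{2l+1}$ on the generic intervals and $\cechleq(S^1;r)\htpyequiv\bigvee^{\mathfrak c}S^{2l}$ at the critical values, which is the first assertion.

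For the monotonicity claim, assume $\frac{l}{2(l+1)}<r\le r'<\frac{l+1}{2(l+2)}$ and put $s=\frac{2r}{1+2r}$, $s'=\frac{2r'}{1+2r'}$, so that $\frac{l}{2l+1}<s\le s'<\frac{l+1}{2l+3}$. Proposition~\ref{prop:cech-inclusion} with $X=S^1$ (using again $T_r(S^1)=T_{r'}(S^1)=S^1$) gives a commuting square in which the two vertical maps $\pi_r$, $\pi_{r'}$ are homotopy equivalences and the bottom row is the inclusion $\cechleq(S^1;r)\incl\cechleq(S^1;r')$; so by the two-out-of-three property it is enough to show that the top map $\eta\colon\vrcleq{S^1}{s}\to\vrcleq{S^1}{s'}$ is a homotopy equivalence. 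Since $\eta$ is an order-preserving injection of the circle which sends edges of $\vrleqdir{\cdot}{s}$ to edges of $\vrleqdir{\cdot}{s'}$ (the required distance estimate is exactly the one in the proof of Proposition~\ref{prop:cech-inclusion}), it restricts, for every finite $Y\subseteq S^1$, to a cyclic homomorphism $\vrleqdir{Y}{s}\to\vrleqdir{\eta(Y)}{s'}$. Now by Lemma~\ref{lem:wfdenseA} choose a finite $Y_0\subseteq S^1$ with $\wfcleq{Y_0}{s}>\frac{l}{2l+1}$ (necessarily also $<\frac{l+1}{2l+3}$); then $\wfcleq{\eta(Y_0)}{s'}\ge\wfcleq{Y_0}{s}>\frac{l}{2l+1}$ by Proposition~\ref{prop:wfbasic}(b). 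Proposition~\ref{prop:inducedhtpyequiv} makes $\eta\colon\vrcleq{Y_0}{s}\to\vrcleq{\eta(Y_0)}{s'}$ a homotopy equivalence, while Remark~\ref{rem:finiteapproximation} makes the inclusions $\vrcleq{Y_0}{s}\incl\vrcleq{S^1}{s}$ and $\vrcleq{\eta(Y_0)}{s'}\incl\vrcleq{S^1}{s'}$ homotopy equivalences; chasing the evident commuting square then shows that $\eta\colon\vrcleq{S^1}{s}\to\vrcleq{S^1}{s'}$ is a homotopy equivalence, which completes the proof.

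I expect the monotonicity step to be the only delicate point: $\eta$ is merely a non-continuous self-map of the circle, so one must check that on finite subsets it genuinely induces a cyclic homomorphism whose winding fraction stays in the correct open interval, and that the finite approximations assemble correctly via Remark~\ref{rem:finiteapproximation}. Everything else is a direct substitution into Theorems~\ref{thm:cech-vr-any} and~\ref{thm:vrcleqfull}.
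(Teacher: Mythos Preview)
Your proof is correct and follows essentially the same route as the paper: compute $T_r(S^1)=S^1$, transfer the Vietoris--Rips result via Theorem~\ref{thm:cech-vr-any}, and for the monotonicity statement reduce $\eta$ to a cyclic homomorphism on a finite approximation $Y_0$ with winding fraction in $(\tfrac{l}{2l+1},\tfrac{l+1}{2l+3})$, then invoke Proposition~\ref{prop:inducedhtpyequiv} and Remark~\ref{rem:finiteapproximation}. The only cosmetic difference is that the paper uses $Y_0\cup\eta(Y_0)$ in the upper-right corner of the square while you use $\eta(Y_0)$; both work, since $\eta$ is an order-preserving injection and hence already a cyclic homomorphism onto its image.
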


\begin{proof}
Note that $r\mapsto\frac{2r}{1+2r}$ is a monotone map which takes the interval $[\frac{l}{2(l+1)},\frac{l+1}{2(l+2)})$ to $[\frac{l}{2l+1},\frac{l+1}{2l+3})$. Since $T_r(S^1)=S^1$ we get a homotopy equivalence
$$\vrcleq{S^1}{\tfrac{2r}{1+2r}}\xrightarrow{\htpyequiv}\cechleq(S^1;r),$$
and the statement of homotopy types now follows from Theorem~\ref{thm:vrcleqfull}.

The statement about inclusions will follow from Proposition~\ref{prop:cech-inclusion} if we show that the map
$\vrcleq{S^1}{\frac{2r}{1+2r}}\xrightarrow{\eta}\vrcleq{S^1}{\frac{2r'}{1+2r'}}$
is a homotopy equivalence. Pick a finite set $Y_0\subseteq S^1$ with $\wfcleq{Y_0}{\frac{2r}{1+2r}}>\frac{l}{2l+1}$. We have a commutative diagram
$$
\xymatrix{
\vrcleq{Y_0}{\frac{2r}{1+2r}} \ar[r]^-\eta \ar@{^{(}->}[d] & \vrcleq{Y_0\cup\eta(Y_0)}{\frac{2r'}{1+2r'}} \ar@{^{(}->}[d]\\
\vrcleq{S^1}{\frac{2r}{1+2r}} \ar[r]^\eta & \vrcleq{S^1}{\frac{2r'}{1+2r'}}.
}
$$
The vertical inclusions are homotopy equivalences by Remark~\ref{rem:finiteapproximation}. The map $\eta$ is injective and preserves the clockwise order of points on $S^1$, hence it is a cyclic homomorphism of the cyclic graphs underlying the top row of the diagram. As $\frac{l}{2l+1}<\wfcleq{Y_0}{\frac{2r}{1+2r}}\leq \wfcleq{Y_0\cup\eta(Y_0)}{\frac{2r'}{1+2r'}}<\frac{l+1}{2l+3}$, the top row is a homotopy equivalence by Proposition~\ref{prop:inducedhtpyequiv}.
\end{proof}

The following is an analogue of Theorem~\ref{thm:vrlessfull} for \v{C}ech complexes in the case when $X=S^1$.

\begin{theorem}
\label{thm:cechlessfull}
For $0<r<\frac12$ we have a homotopy equivalence
$$\cechless(S^1;r)\htpyequiv S^{2l+1} \quad \mathrm{for}\ \tfrac{l}{2(l+1)}<r\leq\tfrac{l+1}{2(l+2)}, \ l=0,1,\ldots.$$
Moreover, if $\frac{l}{2(l+1)}<r\leq r'\leq\frac{l+1}{2(l+2)}$ then the inclusion $\cechless(S^1;r)\incl\cechless(S^1;r')$ is a homotopy equivalence.
\end{theorem}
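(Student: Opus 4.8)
The plan is to realize $\cechless(S^1;r)$ as an increasing union of the complexes $\cechleq(S^1;r')$ for $r'<r$, and then to read off its homotopy type from Theorem~\ref{thm:cechleqfull}.

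\textbf{Step 1 (a colimit presentation).} First I would establish the identity of abstract simplicial complexes
$$\cechless(S^1;r)=\bigcup_{0<r'<r}\cechleq(S^1;r').$$
For the inclusion ``$\subseteq$'': a finite face $\sigma$ of $\cechless(S^1;r)$ lies in some open arc $(a,a+2r)_{S^1}$, and since $\sigma$ is finite it lies in the closed arc $[a+m,a+M]_{S^1}$, where $m=\min_{x\in\sigma}\ddist(a,x)>0$ and $M=\max_{x\in\sigma}\ddist(a,x)<2r$; thus $\sigma$ is a face of $\cechleq(S^1;\tfrac{M-m}{2})$ with $\tfrac{M-m}{2}<r$. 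The reverse inclusion is immediate, since a closed arc of length $2r'<2r$ is contained in an open arc of length $2r$. Giving the realizations the weak topology with respect to finite subcomplexes, exactly as in Section~\ref{sect:s1}, the maps $\cechleq(S^1;r')\incl\cechleq(S^1;r'')$ for $r'\le r''$ are inclusions of closed subcomplexes, hence cofibrations, so (passing to a cofinal sequence $r'_n\nearrow r$ if one prefers a countable colimit)
$$\cechless(S^1;r)=\colim_{0<r'<r}\cechleq(S^1;r')\htpyequiv\hocolim_{0<r'<r}\cechleq(S^1;r').$$

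\textbf{Step 2 (homotopy type).} Suppose $\frac{l}{2(l+1)}<r\le\frac{l+1}{2(l+2)}$. Fix any $s_0$ with $\frac{l}{2(l+1)}<s_0<r$, and restrict to the cofinal subposet $\{s:s_0\le s<r\}$. Each such $s$ satisfies $\frac{l}{2(l+1)}<s<\frac{l+1}{2(l+2)}$ (indeed $s<r\le\frac{l+1}{2(l+2)}$ forces the strict upper bound), so by Theorem~\ref{thm:cechleqfull} every term $\cechleq(S^1;s)$ in this subdiagram is homotopy equivalent to $S^{2l+1}$ and, by the ``moreover'' part of that theorem, every bonding map $\cechleq(S^1;s)\incl\cechleq(S^1;s')$ with $s_0\le s\le s'<r$ is a homotopy equivalence. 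A colimit of trivial cofibrations is a trivial cofibration, so the inclusion $\cechleq(S^1;s_0)\incl\cechless(S^1;r)$ is a homotopy equivalence; in particular $\cechless(S^1;r)\htpyequiv S^{2l+1}$.

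\textbf{Step 3 (naturality).} Given $\frac{l}{2(l+1)}<r\le r'\le\frac{l+1}{2(l+2)}$, pick $s_0$ with $\frac{l}{2(l+1)}<s_0<r$. Since $s_0<r\le r'$, Step~2 applies to both $r$ and $r'$, so both $\cechleq(S^1;s_0)\incl\cechless(S^1;r)$ and $\cechleq(S^1;s_0)\incl\cechless(S^1;r')$ are homotopy equivalences. They factor the inclusion $\cechless(S^1;r)\incl\cechless(S^1;r')$, so the latter is a homotopy equivalence by the two-out-of-three property.

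The main obstacle is Step~1: one must verify carefully that the set-theoretic union of the $\cechleq(S^1;r')$ over $r'<r$ is exactly $\cechless(S^1;r)$ --- the key point being that a finite face of an open arc, staying away from the two endpoints, already fits into a slightly shorter closed arc --- and that the colimit over the directed index set $(0,r)$ computes the homotopy type; this is the same diagram-comparison bookkeeping used for the $F(X)$-colimits in Section~\ref{sect:s1}. Everything after that is a direct appeal to Theorem~\ref{thm:cechleqfull}. (Alternatively one could develop the winding-fraction machinery for \v{C}ech complexes and mimic the proof of Theorem~\ref{thm:vrlessfull} verbatim, but the colimit route is shorter given what is already available.)
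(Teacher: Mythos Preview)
Your proof is correct and follows essentially the same approach as the paper: express $\cechless(S^1;r)$ as the increasing union (hence homotopy colimit) of the closed complexes $\cechleq(S^1;r')$ for $r'<r$, and then invoke Theorem~\ref{thm:cechleqfull} on a cofinal piece where all bonding maps are equivalences. The paper uses the specific countable sequence $r'_n=r-\tfrac{1}{n}$ and handles the naturality statement via a levelwise natural transformation of diagrams, whereas you work with the full poset and deduce naturality by a two-out-of-three argument through a common $\cechleq(S^1;s_0)$; these are cosmetic differences.
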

\begin{proof}
Fix $\frac{l}{2(l+1)}<r\leq \frac{l+1}{2(l+2)}$ and note that $\cechless(S^1,S^1;r)=\colim_n \cechleq(S^1;r-\frac{1}{n})$. All inclusions $\cechleq(S^1;r-\frac{1}{n})\incl \cechleq(S^1;r-\frac{1}{n+1})$ are cofibrations and by Theorem~\ref{thm:cechleqfull} they are self-homotopy equivalences of $S^{2l+1}$ for sufficiently large $n$. That proves the statement of homotopy types.

For the statement about inclusions, note the inclusions $\cechleq(S^1;r-\frac{1}{n})\incl \cechleq(S^1;r'-\frac{1}{n})$ define a natural transformation of diagrams which is a levelwise homotopy equivalence for sufficiently large $n$ by Theorem~\ref{thm:cechleqfull}. It follows that the induced map of (homotopy) colimits $\cechless(S^1;r)\incl\cechless(S^1;r')$ is a homotopy equivalence.
\end{proof}

\section{Concluding remarks}
\label{sect:other}

A natural generalization of our results would be to investigate the complexes $\vrc{M}{r}$ and $\cech(M,M;r)$ for Riemannian manifolds $M$ other than $S^1$, though very little is known along these lines. Intriguing examples include the spheres $S^n$ and tori $(S^1)^n$ for $n\geq 2$. One difficulty is that it is not known whether the homotopy type of $\vrc{M}{r}$ can be approximated by those of complexes $\vrc{X}{r}$ for sufficiently dense subsets $X\subseteq M$. Furthermore, already for $M=S^2$ the complete list of homotopy types of complexes $\vrc{X}{r}$ for finite subsets $X\subset S^2$ is not known.


Towards the goal of understanding Vietoris--Rips complexes of more spaces, we briefly describe two results, the homotopy types of annuli and of tori equipped with the $\ell_\infty$ metric, which can be derived from our computation of $\vrc{S^1}{r}$ using known tools.
\begin{proposition}
\label{prop:derived1}
Consider the annulus $D(\rho,\tilde{\rho})=\{(x,y)\in\RR^2~:~\rho^2\leq x^2+y^2\leq \tilde{\rho}^2\}$ with the Euclidean metric. Then for any $r>0$ the space $\vrcless{D(\rho,\tilde{\rho})}{r}$ is homotopy equivalent to an odd-dimensional sphere or to a point.
\end{proposition}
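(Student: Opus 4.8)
The plan is to reduce $\vrcless{D(\rho,\tilde\rho)}{r}$ to the Vietoris--Rips complex of the \emph{inner} boundary circle $C=\{(x,y)\in\RR^2:x^2+y^2=\rho^2\}$ and then invoke Theorem~\ref{thm:vrlessfull}. Let $\iota\colon C\incl D(\rho,\tilde\rho)$ be the inclusion, an isometric embedding, and let $f\colon D(\rho,\tilde\rho)\to C$ be the radial retraction $f(p)=\rho\,p/\|p\|$, so $f\iota=\id_C$. The first point is that $f$ is $1$-Lipschitz for the Euclidean metric: writing $p=s\,u_\alpha$, $q=t\,u_\beta$ with $u_\theta=(\cos\theta,\sin\theta)$ and $\rho\le s,t\le\tilde\rho$, the inequality $\|f(p)-f(q)\|^2\le\|p-q\|^2$ becomes $2\rho^2(1-c)\le s^2+t^2-2stc$ with $c=\cos(\alpha-\beta)\le 1$, and since $\rho^2\le st$ the worst case $c=1$ reduces it to $(s-t)^2\ge 0$. (Projecting onto the outer circle is \emph{not} $1$-Lipschitz, which forces the choice of $C$.) Hence $f$ and $\iota$ induce simplicial maps $f_*\colon\vrcless{D(\rho,\tilde\rho)}{r}\to\vrcless{C}{r}$ and $\iota_*$ in the other direction, with $f_*\iota_*=\id$.

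Second, and this is the technical core, I would show $\iota_* f_*\htpyequiv\id$, so that $f_*$ is a homotopy equivalence. Since $D(\rho,\tilde\rho)$ is star-shaped along radial segments, the straight-line homotopy $H_\lambda=(1-\lambda)\id+\lambda\,\iota f$ takes values in $D(\rho,\tilde\rho)$, and each $H_\lambda$ is $1$-Lipschitz as a convex combination of the $1$-Lipschitz maps $\id$ and $\iota f$ in the normed space $\RR^2$. From the identity $H_\lambda(p)-H_\mu(q)=(1-\lambda)(p-q)+\lambda(f(p)-f(q))-(\lambda-\mu)(q-\iota f(q))$ one gets $\|H_\lambda(p)-H_\mu(q)\|\le\mathrm{diam}\{p,q\}+|\lambda-\mu|(\tilde\rho-\rho)$, so for any $\varepsilon>0$, choosing $N>(\tilde\rho-\rho)/\varepsilon$ makes the consecutive simplicial maps $H_{j/N}$ and $H_{(j+1)/N}$ contiguous once composed with the inclusion $\vrcless{D(\rho,\tilde\rho)}{r}\incl\vrcless{D(\rho,\tilde\rho)}{r+\varepsilon}$; chaining these contiguities shows $\iota_* f_*$ and $\id$ agree after that inclusion, for \emph{every} $\varepsilon>0$. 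I expect removing this $\varepsilon$-shift — passing from ``homotopic after an arbitrarily small enlargement of $r$'' to an honest homotopy equivalence at the parameter $r$ itself — to be the main obstacle. I would handle it by noting that $f_*$ and $\iota_*$ commute with all the inclusions $\vrcless{D(\rho,\tilde\rho)}{r}\incl\vrcless{D(\rho,\tilde\rho)}{r'}$, so the above says precisely that $\iota_* f_*$ and $\id$ induce the same endomorphism of the persistence modules $r\mapsto\redhom_*(\vrcless{D(\rho,\tilde\rho)}{r})$ (and $r\mapsto\pi_1$); since these modules are left-continuous for the $\vrc_<$ convention ($\vrcless{D(\rho,\tilde\rho)}{r}=\colim_{r'<r}\vrcless{D(\rho,\tilde\rho)}{r'}$), the ``extra'' summand complementary to the image of $\iota_*$ is ephemeral, hence trivial. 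Together with $f_*\iota_*=\id$ this forces $f_*$ to be an isomorphism on homology and on $\pi_1$ at each $r$, and therefore a homotopy equivalence, all the spaces involved being CW complexes that are simply connected or equal to $S^1$ or a point by the last step.

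Finally, I would identify $\vrcless{C}{r}$. The subspace metric on $C$ is the chordal metric of a circle of radius $\rho$, on which the chord is a strictly increasing function of the shorter arc length; hence a finite subset of $C$ has chordal diameter $<r$ exactly when it has arc-length diameter $<2\rho\arcsin(r/2\rho)$. Rescaling the circumference to $1$ gives an isomorphism $\vrcless{C}{r}\iso\vrcless{S^1}{g(r)}$ with $g(r)=\tfrac1\pi\arcsin(r/2\rho)$ for $0<r<2\rho$, while for $r\ge 2\rho$ the complex $\vrcless{C}{r}$ is contractible (for $r>2\rho$ it is a full simplex; for $r=2\rho$ it is $\vrcless{S^1}{\tfrac12}=\colim_{r'\to\frac12^-}\vrcless{S^1}{r'}$, the mapping telescope of the null-homotopic inclusions $S^1\to S^3\to S^5\to\cdots$ from Theorem~\ref{thm:vrlessfull}, hence contractible), and likewise $\vrcless{D(\rho,\tilde\rho)}{r}$ is itself a full simplex for $r\ge 2\tilde\rho$. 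Since $(0,\tfrac12)=\bigsqcup_{l\ge 0}(\tfrac{l}{2l+1},\tfrac{l+1}{2l+3}]$, Theorem~\ref{thm:vrlessfull} gives $\vrcless{C}{r}\htpyequiv S^{2l+1}$ for the unique $l$ with $\tfrac{l}{2l+1}<g(r)\le\tfrac{l+1}{2l+3}$ when $0<r<2\rho$, and a point otherwise; transporting through the homotopy equivalence $f_*$ from the previous paragraph finishes the proof. (Note that the degenerate case $\rho=0$, a disk, is included: then $C$ is a point, and the argument yields contractibility of $\vrcless{D}{r}$ for every $r$.)
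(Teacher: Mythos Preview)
Your strategy is exactly the paper's: retract the annulus radially onto its inner boundary circle $C$, observe this retraction is $1$-Lipschitz, and then invoke Theorem~\ref{thm:vrlessfull}. The paper disposes of the middle step in one line by citing Hausmann \cite[Proposition~(2.2)]{Hausmann1995}: the straight-line homotopy $H_\lambda$ is a \emph{crushing} (a deformation retraction through non-expanding maps), and Hausmann proves that a crushing induces a homotopy equivalence $\vrcless{C}{r}\hookrightarrow\vrcless{D(\rho,\tilde\rho)}{r}$ directly. Your contiguity-plus-persistence argument is essentially a reproof of that proposition, and it is a reasonable one --- the left-continuity trick ($K(r)=\colim_{r'<r}K(r')$ with all transition maps zero forces $K(r)=0$) is correct and neatly sidesteps the lack of a uniform contiguity step.

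There is, however, one genuine gap in your write-up. You conclude by saying that $f_*$ is an isomorphism on $H_*$ and on $\pi_1$, ``and therefore a homotopy equivalence, all the spaces involved being CW complexes that are simply connected or equal to $S^1$ or a point by the last step.'' But in the $l=0$ range you do not yet know anything about $\vrcless{D(\rho,\tilde\rho)}{r}$ beyond $\pi_1=\ZZ$ and $H_{\ge 2}=0$; that alone does not force the homotopy type of $S^1$ (there are acyclic CW complexes with $\pi_1=\ZZ$ and nontrivial higher homotopy), so the appeal to Whitehead is circular here. The fix is immediate: run your identical persistence argument for $\pi_n$ instead of $H_n$. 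Filtered colimits of groups still commute with kernels, so the same left-continuity reasoning gives $\ker\big(f_*\colon\pi_n(\vrcless{D}{r})\to\pi_n(\vrcless{C}{r})\big)=0$ for every $n$; combined with $f_*\iota_*=\id$ this makes $f_*$ a weak equivalence, hence a homotopy equivalence between CW complexes, with no case split needed. Alternatively, just cite Hausmann as the paper does. Your final paragraph, converting the chordal metric on $C$ to the arc-length metric on $S^1$ and handling the boundary value $r=2\rho$, is correct and in fact more careful than the paper, which silently identifies $C$ with the arc-length $S^1$.
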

\begin{proof}
The homotopy which radially deforms the annulus onto its inner boundary does not increase distances, and so it is a \emph{crushing} map in the sense of Hausmann \cite{Hausmann1995}. By \cite[Proposition (2.2)]{Hausmann1995} the inclusion of the Vietoris--Rips complex of $S^1$ into that of $D(\rho,\tilde{\rho})$ is a homotopy equivalence, and so the result follows from Theorem~\ref{thm:vrlessfull}.
\end{proof}

We include a proof of a result for which we were unable to find a published reference.

\begin{proposition}
\label{prop:derived2}
Suppose $(M_1,d_1),\ldots,(M_n,d_n)$ are metric spaces and $M=M_1\times\cdots\times M_n$ is their product equipped with the supremum metric
$$\ell_\infty((x_1,\ldots,x_n),(y_1,\ldots,y_n))=\max\{d_i(x_i,y_i)~|~i=1,\ldots,n\}.$$
Then for any $r>0$ we have a homotopy equivalence
$$\vrc{M}{r}\htpyequiv \vrc{M_1}{r}\times\cdots\times \vrc{M_n}{r}.$$
\end{proposition}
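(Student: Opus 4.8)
The plan is to reduce the statement to a purely combinatorial fact about \emph{categorical products} of simplicial complexes and then to establish that fact with the Nerve Theorem. The key elementary observation is that for any finite subset $\sigma\subseteq M$ one has $\mathrm{diam}_{\ell_\infty}(\sigma)=\max_{1\le i\le n}\mathrm{diam}_{d_i}(\pi_i(\sigma))$, where $\pi_i\colon M\to M_i$ is the $i$-th coordinate projection; this is simply the fact that a maximum over the finitely many coordinates commutes with the supremum defining a diameter. Hence $\mathrm{diam}(\sigma)<r$ (respectively $\le r$) holds if and only if $\mathrm{diam}(\pi_i(\sigma))<r$ (respectively $\le r$) for every $i$, so that $\vrc{M}{r}$ \emph{equals}, as an abstract simplicial complex, the categorical product $K:=\vrc{M_1}{r}\times\cdots\times\vrc{M_n}{r}$, i.e.\ the complex with vertex set $\prod_i V(\vrc{M_i}{r})$ in which a finite subset is a face precisely when each of its coordinate projections is a face of the corresponding factor.

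It then suffices to prove the following lemma: for simplicial complexes $K_1,\ldots,K_n$ with categorical product $K$, the realization $|K|$ is homotopy equivalent to $|K_1|\times\cdots\times|K_n|$. For a complex $L$ and a vertex $v$, write $\mathrm{st}_L(v)\subseteq|L|$ for the open star; open stars are contractible, and more generally $\bigcap_{v\in\tau}\mathrm{st}_L(v)$ is the (contractible) open star of $\tau$ when $\tau$ is a simplex of $L$ and is empty otherwise. I would cover $|K_1|\times\cdots\times|K_n|$ by the open sets $U_{(v_1,\ldots,v_n)}=\mathrm{st}_{K_1}(v_1)\times\cdots\times\mathrm{st}_{K_n}(v_n)$ indexed by $(v_1,\ldots,v_n)\in\prod_i V(K_i)$. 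Each $U_{(v_1,\ldots,v_n)}$ is a product of contractibles, hence contractible; and the intersection of the subfamily indexed by a finite set $\sigma\subseteq\prod_i V(K_i)$ is $\prod_i\bigl(\bigcap_{v\in\pi_i(\sigma)}\mathrm{st}_{K_i}(v)\bigr)$, which is nonempty (and then again a product of contractibles) exactly when $\pi_i(\sigma)$ is a face of $K_i$ for all $i$, that is, exactly when $\sigma$ is a face of $K$. Therefore the nerve of this cover is $K$ itself, and the Nerve Theorem \cite[Corollary~4G.3]{Hatcher} gives $|K_1|\times\cdots\times|K_n|\htpyequiv|K|$; combined with the first paragraph this proves the proposition. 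In particular, feeding in Theorem~\ref{thm:vrlessfull} reproves the homotopy type of $\vrc{(S^1)^n}{r}$ for the $\ell_\infty$ metric.

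The one genuinely delicate point, and the step I expect to be the main obstacle, is point-set topology rather than geometry: the Nerve Theorem requires the covered space to be paracompact, and $|K_1|\times\cdots\times|K_n|$ with the product topology is a product of CW complexes, which is metrizable (hence paracompact) when the $K_i$ are locally finite but in general need not even be a CW complex. The standard remedy is to reduce to finite subcomplexes exactly as in Section~\ref{sect:s1}: write each $\vrc{M_i}{r}$ as a homotopy colimit of $\vrc{Y_i}{r}$ over finite $Y_i\subseteq M_i$, observe that the tuples $(Y_1,\ldots,Y_n)$ are cofinal among finite subsets of $M=\prod_i M_i$ and that $\vrc{\prod_i Y_i}{r}=\prod_i\vrc{Y_i}{r}$ by the first paragraph, apply the lemma on each finite stage where all spaces are honest finite CW complexes, and conclude by a levelwise comparison of diagrams as in the proofs of Theorems~\ref{thm:vrcleqfull} and~\ref{thm:cechlessfull}.
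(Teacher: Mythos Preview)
Your proof is correct and takes essentially the same approach as the paper: identify $\vrc{M}{r}$ with the categorical product of the $\vrc{M_i}{r}$ via the diameter observation, then use an open-star/nerve argument to show that the realization of a categorical product is homotopy equivalent to the product of realizations. The paper cites this last fact from \cite[Proposition~15.23]{Kozlov} for finite complexes and handles the passage to infinite complexes by invoking Bj\"orner's nerve lemma for arbitrary simplicial complexes \cite[Theorem~10.6]{Bjorner1995} rather than your hocolim reduction to finite subsets, but the two treatments are interchangeable.
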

\begin{proof}
For simplicial complexes $K_1, \ldots, K_n$ the categorical product \cite[Definition 4.25]{Kozlov} (in the category of abstract simplicial complexes) is the complex $\prod_i K_i$ with vertex set $V(K_1)\times\cdots\times V(K_n)$ and with faces given by the condition: $\sigma\in\prod_i K_i$ if and only if $\sigma\subseteq \sigma_1\times\cdots\times \sigma_n$ for some $\sigma_i\in K_i$, $i=1,\ldots,n$. Since a subset of $M$ has diameter equal to the maximum of the diameters of its coordinate projections, we get an isomorphism of simplicial complexes
$$\vrc{M}{r}=\textstyle\prod_{i=1}^n\vrc{M_i}{r}.$$
There is a homotopy equivalence $\prod_i K_i\htpyequiv K_1\times\cdots\times K_n$ by \cite[Proposition 15.23]{Kozlov} when each $K_i$ is a finite simplicial complex, and one can see that the finiteness assumption is not necessary by combining the same proof with a version of the nerve lemma for infinite simplicial complexes \cite[Theorem~10.6]{Bjorner1995}. That ends the proof.
\end{proof}

Applied to the torus $\TT^n=(S^1)^n$ the last proposition yields the homotopy types of $\vrc{\TT^n}{r}$ for the $\ell_\infty$ metric on $\TT^n$. It would be interesting to investigate the homotopy types of $\vrc{\TT^n}{r}$ for other $\ell_p$ metrics on $\TT^n$, especially for the $\ell_2$ metric.

%


\section*{Acknowledgements}
We thank Florian Frick, Matthew Kahle, Vin de Silva, and Francis Motta for helpful conversations, and the anonymous referee for helpful remarks concerning the presentation.

\bibliographystyle{plain}
\bibliography{vietoris}

\begin{thebibliography}{10}

\bibitem{Adamaszek2013}
Micha{\l} Adamaszek.
\newblock Clique complexes and graph powers.
\newblock {\em Israel Journal of Mathematics}, 196(1):295--319, 2013.

\bibitem{AAFPP-J}
Micha{\l} Adamaszek, Henry Adams, Florian Frick, Chris Peterson, and Corrine
  Previte-Johnson.
\newblock Nerve complexes of circular arcs.
\newblock {\em Discrete {\&} Computational Geometry}, 56(2):251--273, 2016.

\bibitem{AAM}
Micha{\l} Adamaszek, Henry Adams, and Francis Motta.
\newblock Random cyclic dynamical systems.
\newblock {\em Advances in Applied Mathematics}, 83:1--23, 2017.

\bibitem{BarmakTheoremA2011}
Jonathan~A Barmak.
\newblock On {Q}uillen's {T}heorem {A} for posets.
\newblock {\em Journal of Combinatorial Theory, Series A}, 118(8):2445--2453,
  2011.

\bibitem{Bjorner1995}
Anders Bj{\"o}rner.
\newblock Topological methods.
\newblock {\em Handbook of Combinatorics}, 2:1819--1872, 1995.

\bibitem{Carlsson2009}
Gunnar Carlsson.
\newblock Topology and data.
\newblock {\em Bulletin of the American Mathematical Society}, 46(2):255--308,
  2009.

\bibitem{CarlssonIshkhanovDeSilvaZomorodian2008}
Gunnar Carlsson, Tigran Ishkhanov, Vin de~Silva, and Afra Zomorodian.
\newblock On the local behavior of spaces of natural images.
\newblock {\em International Journal of Computer Vision}, 76:1--12, 2008.

\bibitem{Chambers2010}
Erin~W Chambers, Vin de~Silva, Jeff Erickson, and Robert Ghrist.
\newblock Vietoris--{R}ips complexes of planar point sets.
\newblock {\em Discrete \& Computational Geometry}, 44(1):75--90, 2010.

\bibitem{ChazalDeSilvaOudot2013}
Fr{\'e}d{\'e}ric Chazal, Vin de~Silva, and Steve Oudot.
\newblock Persistence stability for geometric complexes.
\newblock {\em Geometriae Dedicata}, pages 1--22, 2013.

\bibitem{EdelsbrunnerHarer}
Herbert Edelsbrunner and John~L Harer.
\newblock {\em Computational Topology: An Introduction}.
\newblock American Mathematical Society, Providence, 2010.

\bibitem{Gromov1987}
Mikhael Gromov.
\newblock Hyperbolic groups.
\newblock In Stephen~M Gersten, editor, {\em Essays in Group Theory}. Springer,
  1987.

\bibitem{Hatcher}
Allen Hatcher.
\newblock {\em Algebraic Topology}.
\newblock Cambridge University Press, Cambridge, 2002.

\bibitem{Hausmann1995}
Jean-Claude Hausmann.
\newblock On the {V}ietoris--{R}ips complexes and a cohomology theory for
  metric spaces.
\newblock {\em Annals of Mathematics Studies}, 138:175--188, 1995.

\bibitem{HellNesetril2004}
Pavol Hell and Jaroslav Ne\v{s}et\v{r}il.
\newblock {\em Graphs and Homomorphisms}.
\newblock Oxford University Press, 2004.

\bibitem{huntington1916set}
Edward~V Huntington.
\newblock A set of independent postulates for cyclic order.
\newblock {\em Proceedings of the National Academy of Sciences of the United
  States of America}, pages 630--631, 1916.

\bibitem{Imany2008}
Ramin Imany-Nabiyyi.
\newblock The sizes of components in random circle graphs.
\newblock {\em Discussiones Mathematicae Graph Theory}, 28:511--533, 2008.

\bibitem{Kahle2009}
Matthew Kahle.
\newblock Topology of random clique complexes.
\newblock {\em Discrete Mathematics}, 309(6):1658--1671, 2009.

\bibitem{KlamkinNewman1967}
Murray~S Klamkin and Donald~J Newman.
\newblock Extensions of the birthday surprise.
\newblock {\em Journal of Combinatorial Theory}, 3(3):279--282, 1967.

\bibitem{Kozlov}
Dmitry~N Kozlov.
\newblock {\em Combinatorial Algebraic Topology}, volume~21 of {\em Algorithms
  and Computation in Mathematics}.
\newblock Springer, 2008.

\bibitem{Latschev2001}
Janko Latschev.
\newblock Vietoris--{R}ips complexes of metric spaces near a closed
  {R}iemannian manifold.
\newblock {\em Archiv der Mathematik}, 77(6):522--528, 2001.

\bibitem{LinSchwarcfiter2009}
Min~Chih Lin and Jayme~L Szwarcfiter.
\newblock Characterizations and recognition of circular-arc graphs and
  subclasses: {A} survey.
\newblock {\em Discrete Mathematics}, 309(18):5618--5635, 2009.

\bibitem{Matouvsek2008}
Ji{\v{r}}{\'\i} Matou{\v{s}}ek.
\newblock {LC} reductions yield isomorphic simplicial complexes.
\newblock {\em Contributions to Discrete Mathematics}, 3(2), 2008.

\bibitem{Solomon}
Herbert Solomon.
\newblock {\em Geometric Probability}, volume~28 of {\em CBMS-NSF Regional
  Conference Series in Applied Mathematics}.
\newblock SIAM, 1978.

\bibitem{Vietoris27}
Leopold Vietoris.
\newblock {{\"U}ber den h{\"o}heren Zusammenhang kompakter R{\"a}ume und eine
  Klasse von zusammenhangstreuen Abbildungen}.
\newblock {\em Mathematische Annalen}, 97(1):454--472, 1927.

\bibitem{Hocolim}
Volkmar Welker, G{\"u}nter~M Ziegler, and \v{Z}ivaljevi{\'c} Rade~T.
\newblock Homotopy colimits --- comparison lemmas for combinatorial
  applications.
\newblock {\em Journal f{\"u}r die Reine und Angewandte Mathematik (Crelles
  Journal)}, 509:117--149, 1999.

\end{thebibliography}

\appendix
\section{}
\label{sect:appendixA}
We prove the following algebraic fact, which is used in the proof of Proposition~\ref{prop:vrc-even}.

\begin{proposition}
\label{prop:appendixA}
Suppose that $V=\{e_1,\ldots,e_n\}$ is a basis of the free abelian group $\ZZ^n$. Consider the set of $n+{n\choose 2}$ vectors
$$\widetilde{V}=\{e_1,\ldots,e_n\}\cup\{e_i-e_j~:~1\leq i<j\leq n\}.$$
For an arbitrary choice $v,v_1,\ldots,v_k\in \widetilde{V}$, if the subgroup of $\ZZ^n$ generated by $\{v_1,\ldots,v_k\}$ contains some non-zero multiple of $v$, then it also contains $v$.
\end{proposition}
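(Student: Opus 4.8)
The plan is to recognize $\widetilde V$ as the set of positive roots of a type $A_n$ root system and to reduce the statement to a purely graph-theoretic fact about spans of edge vectors. Concretely, I would embed $\ZZ^n\hookrightarrow\ZZ^{n+1}$ (with $\ZZ^{n+1}$ free on symbols $f_0,f_1,\dots,f_n$) by $e_i\mapsto f_i-f_0$. This is injective, its image is the sum-zero sublattice $Q=\{x\in\ZZ^{n+1}:\sum_i x_i=0\}$, and under it $e_i-e_j\mapsto f_i-f_j$, so $\widetilde V$ maps bijectively onto the set of ``edge vectors'' $\{f_i-f_j:0\le i<j\le n\}$. Hence it suffices to prove: if $\beta,\beta_1,\dots,\beta_k$ are edge vectors and $N\beta\in\langle\beta_1,\dots,\beta_k\rangle$ for some $N\ne 0$, then $\beta\in\langle\beta_1,\dots,\beta_k\rangle$.

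Next I would describe the subgroup $L:=\langle\beta_1,\dots,\beta_k\rangle$ explicitly. Associate to $\{\beta_1,\dots,\beta_k\}$ the graph $G$ on vertex set $\{0,1,\dots,n\}$ having an edge $\{i,j\}$ whenever some $\beta_m=f_i-f_j$, and let $C_1,\dots,C_s$ be the connected components of $G$ with at least one edge. The claim is
$$L=\bigoplus_{t=1}^{s}L_t,\qquad L_t=\{x\in\ZZ^{n+1}:\operatorname{supp}(x)\subseteq C_t,\ \textstyle\sum_{i\in C_t}x_i=0\}.$$
The inclusion $\subseteq$ is immediate since each $\beta_m=f_i-f_j$ with $\{i,j\}\in E(G)$ lies in the $L_t$ with $i,j\in C_t$, and the sum is direct because the $C_t$ are pairwise disjoint. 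For $\supseteq$ one uses the standard telescoping argument: within a connected component, $f_i-f_j=\sum_r(f_{u_{r-1}}-f_{u_r})\in L$ for any path $i=u_0,\dots,u_m=j$ in $G$, so $L$ contains $f_i-f_j$ for \emph{all} $i,j\in C_t$; then fixing a base vertex $w\in C_t$, any $x\in L_t$ equals $\sum_{i\in C_t}x_i(f_i-f_w)\in L$.

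Finally I would conclude. Write $\beta=f_p-f_q$ and decompose $N\beta=\sum_t y_t$ with $y_t\in L_t$. Since $N\ne 0$, $\operatorname{supp}(N\beta)=\{p,q\}$, and $N\beta\in\bigoplus_t L_t$ forces $p,q$ to lie in $\bigcup_t C_t$. If $p\in C_{t_1}$ and $q\in C_{t_2}$ with $t_1\ne t_2$, then comparing coordinates over $C_{t_1}$ gives $y_{t_1}|_{C_{t_1}}=N\beta|_{C_{t_1}}=Nf_p|_{C_{t_1}}$, which has coordinate sum $N\ne 0$, contradicting $y_{t_1}\in L_{t_1}$. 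Hence $p,q\in C_{t_0}$ for a single $t_0$, so $\beta=f_p-f_q$ has support in $C_{t_0}$ and coordinate sum $0$, i.e. $\beta\in L_{t_0}\subseteq L$. Pulling back through the isomorphism $\ZZ^n\cong Q$ yields $v\in\langle v_1,\dots,v_k\rangle$. The only step needing genuine care — and the one I would flag as the ``hard'' part — is the integral span computation $L=\bigoplus_t L_t$: it is crucial that edge vectors of a connected graph span the full sum-zero lattice on its vertices \emph{over $\ZZ$} (not merely over $\QQ$), which is exactly what the telescoping-plus-base-vertex argument supplies; everything else is bookkeeping about disjoint supports.
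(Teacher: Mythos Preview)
Your proof is correct, and it takes a genuinely different (and cleaner) route than the paper's.

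The paper works directly in $\ZZ^n$ and must treat the two shapes $v=e_i$ and $v=e_i-e_j$ separately: for $v=e_1$ it builds a graph whose \emph{vertices are the vectors} $v,v_1,\dots,v_k$, joined when they share a nonzero coordinate, isolates the connected component of $e_1$, argues that some other $e_l$ must lie in that component (coordinate-sum argument), and then telescopes along a shortest path. The case $v=e_1-e_2$ is then reduced to the first by an ad hoc change of basis $e_1'=e_1-e_2,\ e_2'=-e_2,\ e_i'=e_i-e_2$.

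Your embedding $e_i\mapsto f_i-f_0$ into $\ZZ^{n+1}$ identifies $\widetilde V$ (up to signs, which is harmless for subgroup questions) with the full set of type-$A_n$ positive roots, so the two cases collapse into one. Your graph lives on the \emph{index set} $\{0,\dots,n\}$, and you obtain an explicit description $L=\bigoplus_t L_t$ of the generated subgroup as the direct sum of sum-zero lattices on the connected components. The final step---that $N(f_p-f_q)\in L$ forces $p,q$ into the same component---is then a one-line support/coordinate-sum check. The telescoping idea is common to both arguments, but your packaging avoids the case split and the auxiliary basis change, and yields the stronger structural statement about $L$ along the way.
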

\begin{proof}
We can assume that $v,v_1,\ldots,v_k$ are pairwise distinct. Let $A=\{v,v_1,\ldots,v_k\}$. Note that when expressed in the basis $\{e_1,\ldots,e_n\}$, any two vectors in $\widetilde{V}$ have at most one non-zero coordinate in common. By symmetry it suffices to consider the cases $v=e_1$ and $v=e_1-e_2$.

\smallskip
First suppose $v=e_1$. We have the identity
$$pe_1=\sum_{v_i\in A\setminus\{e_1\}}a_iv_i$$
for some $p,a_1,\ldots,a_k\in\ZZ$ with $p\neq 0$. Consider a labelled graph $G$ with vertex set $A$, where two vectors are connected by an edge with label $i$ ($1\leq i\leq n$) if they both have non-zero $i$-th coordinate. Let $A_1$ be the vertex set of the connected component of $G$ containing $e_1$. Then we still have the identity
$$pe_1=\sum_{v_i\in A_1\setminus\{e_1\}}a_iv_i$$
because the vectors in $A_1$ and $A\setminus A_1$ contribute to two non-overlapping sets of coordinates. 

It is not possible that all the vectors in $A_1\setminus\{e_1\}$ are of the form $e_i-e_j$. Indeed, any linear combination of such vectors has the sum of its coordinates equal to $0$, whereas $pe_1$ does not. Hence the connected component $A_1$ contains some vector $e_l$ with $l\neq 1$. Consider the shortest path in $G$ from $e_l$ to $e_1$. It is easy to see that no edge label appears along this path more than once and that all the intermediate vertices are vectors of the form $e_i-e_j$. The shortest path has the form
$$
e_{l}=e_{l_0} \to \pm(e_{l_0}-e_{l_1}) \to \pm(e_{l_1}-e_{l_2})\to\cdots\to \pm(e_{l_{s-1}}-e_{l_s})\to e_{l_s}=e_1
$$
for some $s\geq 1$, where $l_0=l$ and $l_s=1$, all $l_i$ are pairwise distinct, and $\pm(e_i-e_j)$ stands for $e_{\min(i,j)}-e_{\max(i,j)}$. Now we obtain a presentation
$$
e_1=e_{l_0} +(e_{l_1}-e_{l_0}) + (e_{l_2}-e_{l_1})+\cdots+(e_{l_{s}}-e_{l_{s-1}})
$$
of $e_1$ as a linear combination of elements of $A_1\setminus\{e_1\}$ (with coefficients $\pm1$). That ends the proof of the proposition for $v=e_1$.

\smallskip
The other case, $v=e_1-e_2$, can be reduced to the previous one as follows. Set $e_1'=e_1-e_2$, $e_2'=-e_2$, $e_3'=e_3-e_2$, \ldots, $e_n'=e_n-e_2$. The set $V'=\{e_1',\ldots,e_n'\}$ is a basis of $\ZZ^n$. Moreover, up to signs, the sets $\widetilde{V'}$ and $\widetilde{V}$ coincide. The assumption that $p(e_1-e_2)$ is a combination of $v_1,\ldots,v_k\in \widetilde{V}$ is therefore equivalent to the assumption that $pe_1'$ is a combination of $\pm v_1,\ldots,\pm v_k\in \widetilde{V'}$. From the previous case we get that $e_1'=e_1-e_2$ is also a linear combination of $v_1,\ldots,v_k$. 
\end{proof}

\section{}
\label{sect:appendixB}
In this section we prove equation \eqref{eq:expectR} which gives the expected waiting time for the appearance of an $(\varepsilon,m)$-regular subset in a random sampling of $S^1$.

We will first determine the waiting times for some occupancy problems in the ``balls into bins'' model. Let $K\geq 1$ be the number of bins and fix a constant $m\geq 1$. Consider the following random experiments.
\begin{itemize}
\item[(a)] We throw balls independently and uniformly at random into $K$ bins until one of the bins contains $m$ balls. Let $A_m(K)$ be the random variable denoting the number of balls thrown. By \cite[Theorem 2]{KlamkinNewman1967} we have
$$\expect[A_m(K)]=\Theta(K^{\frac{m-1}{m}})\quad \mathrm{as}\ K\to\infty.$$
This is known as the generalized birthday paradox, the case $m=2$ (and $K=365$ in the folklore formulation) being the classical birthday paradox.
\item[(b)] We throw balls as before, but each time a ball is thrown we assign it, uniformly at random, with one of $m$ colors. When a bin with $m$ balls appears, we call the sequence of colors in that bin, in the order in which they were thrown, the \emph{outcome} of the experiment. The outcome is \emph{good} if all of the $m$ balls have different colors . The number of balls thrown is still given by the random variable $A_m(K)$, since the colors do not influence the stopping condition. Since the balls were colored independently and uniformly, each outcome is equally likely. In particular, the probability of a good outcome is $m!/m^m$.
\item[(c)] We repeat the experiment of (b) until we obtain a good outcome, each time starting with a fresh set of empty bins. Let $\tau$ be the random variable counting the number of repetitions and let $B_m(K)$ be the total number of balls thrown. We have
$$B_m(K)=A_m(K)_1+\cdots+ A_m(K)_\tau,$$
where the $A_m(K)_i$ are independent random variables with the distribution of $A_m(K)$. Clearly $\tau$ is a stopping time with respect to these variables, and by the discussion in (b) we have $\expect[\tau]=m^m/m!$. Now Wald's equation gives
$$\expect[B_m(K)]=\expect[A_m(K)]\cdot\expect[\tau]=
\expect[A_m(K)]\cdot\frac{m^m}{m!}=\Theta(K^{\frac{m-1}{m}})\quad\mathrm{as}\ K\to\infty.$$
\item[(d)] We throw balls independently and uniformly at random into $K$ bins and we color each ball uniformly with one of $m$ colors, until some bin contains at least one ball of each color. If $C_m(K)$ is the random variable counting the number of balls thrown then $A_m(K)\leq C_m(K)\leq B_m(K)$ and  
$$\expect[C_m(K)]=\Theta(K^\frac{m-1}{m})\quad \mathrm{as}\ K\to\infty.$$
In the classical case $m=2$ this is known as the birthday paradox with two types (a boy sharing a birthday with a girl); we were unable to find a literature reference for this result with arbitrary $m$.
\end{itemize}

Recall that $R_m(\varepsilon)$ is the number of points chosen uniformly at random from $S^1$ until an $(\varepsilon,m)$-regular subset appears. We claim that for any integer $K\geq 1$ we have
\begin{equation}
\label{eq:randomineq}
R_m(\tfrac{1}{Km})\leq C_m(K).
\end{equation}
To see this divide $S^1$ into arcs of length $\frac{1}{Km}$. Each union of $m$ arcs whose centers form a regular $m$-gon represents one of our $K$ bins. A uniformly random point $x\in S^1$ can be chosen by picking a uniformly random point $y\in[0,\frac{1}{m})_{S^1}$ and a random number $i\in\{0,\ldots,m-1\}$ and setting $x=y+\frac{i}{m}$; note $y$ determines the bin and $i$ determines the color of the ball. When some bin contains a ball of each color, the corresponding points are within $\frac{1}{Km}$ (in fact even $\frac{1}{2Km}$) from the vertices of a regular $m$-gon.

Next, we claim that
\begin{equation}
\label{eq:randomineq2}
\expect[A_m(K)]\leq 2 \expect[R_m(\tfrac{1}{4Km})].
\end{equation}
Let $P_1$ be the collection of arcs and bins as above, and let $P_2$ be the same collection rotated by $\frac{1}{2Km}$. If $Y$ is $(\frac{1}{4Km},m)$-regular then all points of $Y$ belong to the same bin with respect to $P_1$ or with respect to $P_2$ (or both). Let $p_1$ (resp.\ $p_2$) be the probability that the first time a $(\frac{1}{4Km})$-regular set emerges in the random process $(\nicex_1,\nicex_2,\ldots)$, it is contained in one bin with respect to $P_1$ (resp.\ $P_2$). By symmetry $p_1=p_2$, therefore $p_1\geq\frac12$. If we repeat the whole process until it ends with a set in $P_1$ then the expected number of repetitions is $\expect[\tau]=1/p_1\leq 2$. An argument similar to that in (c) above proves \eqref{eq:randomineq2}.

Letting $K\to\infty$ in \eqref{eq:randomineq} and \eqref{eq:randomineq2} and using the asymptotics of $\expect[A_m(K)]$ and $\expect[C_m(K)]$ we obtain 
$$\expect[R_m(\varepsilon)]=\Theta((\tfrac{1}{\varepsilon})^{\frac{m-1}{m}})\quad \mathrm{as}\ \varepsilon\to 0,$$
which proves \eqref{eq:expectR}.
\end{document}